\numberwithin{equation}{section}
\newcommand{\coloneqq}{:=}
\DeclareMathOperator{\supp}{supp}
\DeclareMathOperator{\dist}{dist}
\DeclareMathOperator{\Spec}{spec}
\DeclareMathOperator{\Specess}{spec_\mathrm{ess}}
\DeclareMathOperator{\Specdisc}{spec_\mathrm{disc}}
\DeclareMathOperator{\Ran}{Ran}
\DeclareMathOperator{\Span}{span}
\renewcommand{\tilde}{\widetilde}
\newtheorem{theorem}{Theorem}[section]
\newtheorem{proposition}[theorem]{Proposition}
\newtheorem{lemma}[theorem]{Lemma}
\newtheorem{Corollaires}[theorem]{Corollary}
\newtheorem{openp}[theorem]{Open problem}
\theoremstyle{definition}
\newtheorem{definition}[theorem]{Definition}
\newtheorem{Remarques}[theorem]{Remark}
\newtheorem*{theorem*}{Theorem}
\newtheorem*{Corollaires*}{Corollary}
\title{\bf Spectral asymptotics for Robin Laplacians on polygonal domains}
\author{\sc Magda Khalile}
\date{}
\begin{document}
\maketitle

\begin{abstract}

Let $\Omega$  be a curvilinear polygon and $Q^\gamma_{\Omega}$ be the Laplacian in $L^2(\Omega)$, $Q^\gamma_{\Omega}\psi=-\Delta \psi$, with the Robin boundary condition $\partial_\nu \psi=\gamma \psi$, where $\partial_\nu$ is the outer normal  derivative and $\gamma>0$. We are interested in the behavior of the eigenvalues of $Q^\gamma_\Omega$ as $\gamma$ becomes large. 
We prove that the asymptotics of the first eigenvalues of $Q^\gamma_\Omega$ is determined at the leading order by those of model operators associated with the vertices: the Robin Laplacians acting on the tangent sectors associated with $\partial \Omega$. In the particular case of a polygon with straight edges 
 the first eigenpairs are exponentially close to those of the model operators. 
Finally, we prove a
Weyl asymptotics for the eigenvalue counting function of $Q^\gamma_\Omega$ for a threshold depending on $\gamma$, and show that the leading term is the same as for smooth domains.
\end{abstract}
\noindent\textbf{Key words} Laplacian; Robin boundary conditions; eigenvalue; spectral geometry; asymptotic analysis

\section{Introduction}

Let $\Omega\subset\mathbb R^2$ be a Lipschitz domain. For $\gamma>0$, we consider the Robin Laplacian $Q_\Omega^\gamma$ acting on $L^2(\Omega)$ 
as 
\[
Q^\gamma_\Omega\psi=-\Delta \psi, \quad \frac{\partial \psi}{\partial \nu} = \gamma\psi \text{ on } \partial \Omega,
\]
where $\nu$ is the outer unit normal. More rigorously, if $\Omega$ is either bounded or with a suitable behavior at infinity, the sesquilinear form 
\[
q_\Omega^\gamma(\psi,\psi)=\int_{\Omega} \lvert \nabla \psi\rvert^2 dx -\gamma\int_{\partial\Omega} \lvert \psi\rvert^2 ds,\quad \psi \in H^1(\Omega),
\]
where $s$ denotes the arc length of $\partial\Omega$,
is closed and semibounded from below and hence defines a unique self-adjoint operator which is denoted by $Q^\gamma_\Omega$. The boundary of $\Omega$ is either compact or non-compact. In the latter case, some additional assumptions are needed on $\partial \Omega$, see \cite{EM, PPeff}, to ensure the existence of discrete eigenvalues. In the following, we assume that $\partial\Omega$ is such that the discrete spectrum of $Q^\gamma_\Omega$ is not empty and we denote by $E_n(Q_\Omega^\gamma)$ its discrete eigenvalues counted the multiplicities and ordered in the increasing way. 
The problem involving Robin Laplacians appears in several applications as the study of reaction-diffusion equations in the long-time asymptotics, see \cite{LOS}, or the estimation of the critical temperature of superconductors, see \cite{GS}.
  
  In this paper, we are interested in the asymptotics of these eigenvalues as the parameter $\gamma$ goes to $+\infty$.
It is easy to see that  $E_n(Q_\Omega^\gamma)\to-\infty$ as $\gamma\to+\infty$ for each $n$. Moreover, by the standard Sobolev trace theorems, see for example \cite[Theorem 1.5.1.10]{GRB}, we know that there exists a constant $C_\Omega>0$ such that $
E_1(Q_\Omega^\gamma)\geq -C_\Omega\gamma^2
$ for  $\gamma$ large enough if $\Omega$ is bounded.

In the past few decades, more precise estimates have created a lot of interest and it was particularly pointed out that the behavior of the eigenvalues is sensitive to the regularity of the boundary.
As shown in \cite{LP, LOS, bp}, for a large class of domains $\Omega$ there exists a constant $C_\Omega\geq 1$ such that 
$E_1(Q_\Omega^\gamma) \sim -C_\Omega\gamma^2$ as $\gamma\to +\infty$. If $\partial\Omega$ is $C^1$, then $C_\Omega=1$ as proved in 
  \cite{LZ}. Later, it was proved in \cite{DK} that this asymptotics holds for any $E_n(Q^\gamma_\Omega)$.
Under additional smoothness assumptions, more precise results are obtained \cite{HK, HKR, KovP, PPeff}. In particular, in \cite{EMP, PP} it was shown that for each fixed $n$, and for large $\gamma$ there holds
\[
E_n(Q_\Omega^\gamma)=-\gamma^2-\kappa_{\max} \gamma+o(\gamma),
\]
where $\kappa_{\max}$ denotes the maximum of the curvature of $\partial \Omega$.
Furthermore, if 
 $\mathcal N(Q_\Omega^\gamma, \lambda)$ 
 denotes the number of eigenvalues of $Q_\Omega^\gamma$ in $(-\infty, \lambda)$, the following Weyl-type asymptotics was proved in \cite{HKR} for smooth bounded $\Omega$, 
\begin{align}
\label{weyl1}
\mathcal N(Q_\Omega^\gamma, E\gamma^2) = \gamma \frac{\lvert \partial \Omega\rvert \sqrt{E+1}}{\pi} +R_1(\gamma),\quad  R_1(\gamma)=O(1) \text{ as } \gamma\to +\infty,
\end{align}
for all $E\in(-1,0)$ and
\begin{align}
\label{weyl2}
\mathcal N(Q_\Omega^\gamma, -\gamma^2+\lambda\gamma) =
\frac{\sqrt{\gamma}}{\pi}
\int_{\partial\Omega}\sqrt{(\kappa(s)+\lambda)_+} ds+R_2(\gamma), \quad R_2(\gamma)=o(\sqrt{\gamma})\text{ as } \gamma\to +\infty,
\end{align}
for all $\lambda\in\mathbb R$, where $\kappa$ is the curvature of $\partial\Omega$ and $(x)_+\coloneqq\max(x,0)$. Higher dimensional analogues were considered in \cite{KKR}.

Few informations are available for non-smooth domains $\Omega\subset\mathbb R^2$.
By \cite{LP}, if $\Omega$ is a (suitably defined) curvilinear polygon which smallest angle is $2\alpha$, then 
\[
C_\Omega= \frac{1}{\sin^2\alpha} \text{ if } \alpha <\frac{\pi}{2}, \quad C_\Omega=1 \text{ otherwise}.
\] 
More precise asymptotics were only given for very specific $\Omega$ \cite{HP, Mc, Pan, P}. For a more detailed discussion of available results, we refer to the recent review paper \cite{BFK}, which also contains a number of interesting open problems. In particular, the following question was asked, see  
\cite[Open problem~4.19]{BFK}:
\begin{openp}\label{Openp}
Suppose that $\Omega\subset\mathbb R^2$ is a bounded, piecewise smooth domain having $L\geq1$ corners with half-angles $\alpha_1\leq...\leq \alpha_L<\displaystyle\frac{\pi}{2}$. Is it true that the first $L$ eigenvalues have the asymptotic behavior
\[
E_n(Q_\Omega^\gamma) \sim -\frac{\gamma^2}{\sin^2 \alpha_n},\quad \text{as}\quad \gamma\to +\infty,
\]
for $n=1,...,L$ ? How does $E_n(Q_\Omega^\gamma)$ behave for fixed $n\geq L$ ? Investigate the corresponding situation in higher dimensions and for more general $\Omega$.
\end{openp}
In the present paper we show, in particular, that the conjecture is not true as stated, and we propose and prove a correct version.

Let us pass to a description of the main results. 
Let $\Omega\subset\mathbb R^2$ be a curvilinear polygon with $C^4$ smooth sides (see Definition~\ref{defcurvpol} for a rigorous description). If $v$ is a vertex of $\Omega$ (that is a point at which the boundary is not smooth) we denote by $2\alpha_v \in(0,\pi)\cup (\pi,2\pi)$ the angle formed by the one-sided tangents at $v$ and introduce the set of the convex vertices by 
\[
\mathcal V\coloneqq\{v\in\partial \Omega : \alpha_v\in (0,\pi/2)\}.
\]
Denote by  $U_v$ the infinite sector of half aperture $\alpha_v$ given by
\[
U_v\coloneqq\{(x_1,x_2)\in\mathbb R^2 : \lvert \arg(x_1+ix_2)\rvert <\alpha_v\},
\]
and consider the associated Robin Laplacians $T_v\coloneqq Q^1_{U_v}$. This operator was studied in \cite{KP, LP} and we recall some of the results: 
the essential spectrum of $T_v$ does not depend on the half-angle of $U_v$, $\Specess(T_v)=[-1,+\infty)$, and the discrete spectrum is non-empty if and only if $\alpha_v<\frac{\pi}{2}$. Moreover,
if $\alpha_v <\frac{\pi}{2}$ then, $E_1(T_v)=-\frac{1}{\sin^2 \alpha_v}$, the discrete spectrum is finite,
\begin{align}
\label{KP1}
\mathcal N(T_v,-1)\to +\infty \text{ and } E_n(T_v)\to -\infty \text{ as } \alpha_v\to 0.
\end{align}
In addition, for all $\alpha_v\in[\frac{\pi}{6},\frac{\pi}{2})$, we have 
\begin{align}
\label{KP2}
\mathcal N(T_v,-1)=1.
\end{align}

We define the model operator
\[
T^\oplus\coloneqq \bigoplus_{v\in\mathcal V} T_v, \text{ and } \mathcal N^\oplus\coloneqq\sum_{v\in\mathcal V} \mathcal N_v,
\]
where $\mathcal N_v\coloneqq \mathcal N(T_v,-1)$.

Our main results are as follows. First, we discuss the behavior of the $\mathcal N^\oplus$ first eigenvalues of $Q^\gamma_\Omega$ as $\gamma$ becomes large. 

\begin{theorem}
\label{Intro_Thasvp}
For any $n\in\{1,...,\mathcal N^\oplus\}$ there holds
\begin{align*}
E_n(Q^\gamma_{\Omega})= E_n(T^\oplus) \gamma^2 +r(\gamma), \text{ as } \gamma \to +\infty,
\end{align*}
where $r(\gamma)=O(\gamma^{4/3})$, and one can take $r(\gamma)=O(e^{-c\gamma})$ with $c>0$ if $\Omega$ is a polygon with straight edges. 
\end{theorem}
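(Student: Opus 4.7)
The plan is to establish matching upper and lower bounds via the min-max principle. A convenient preliminary step is the dilation $x\mapsto\gamma x$, giving the unitary equivalence $Q^\gamma_\Omega \cong \gamma^2 Q^1_{\gamma\Omega}$; in the rescaled picture, $\gamma\Omega$ near each vertex $v$ deviates from the infinite tangent sector $U_v$ by $O(R^2/\gamma)$ at rescaled distance $R$, which is precisely what allows one to compare $Q^1_{\gamma\Omega}$ with $T^\oplus = \bigoplus_v T_v$. Throughout, the vertex cutoffs have size $\delta=\delta(\gamma)\to 0$, whose optimal value will turn out to be $\gamma^{-2/3}$.

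\emph{Upper bound.} For each $v\in\mathcal V$ and each $j\in\{1,\dots,\mathcal N_v\}$, let $u_{v,j}$ be an $L^2$-normalized eigenfunction of $T_v$ with eigenvalue $E_j(T_v)<-1$; since this lies strictly below $\inf\Specess(T_v)=-1$, standard Agmon estimates yield $|u_{v,j}(y)|\lesssim e^{-c|y|}$ on $U_v$. Pick a local diffeomorphism $\Phi_v$ mapping a neighbourhood of $v$ in $\Omega$ onto a neighbourhood of $0$ in $U_v$, sending each $C^4$ edge adjacent to $v$ onto the corresponding straight side of $U_v$ (after aligning the tangent cones, $\Phi_v(v)=0$ and $D\Phi_v(v)=\mathrm{Id}$, so $D\Phi_v-\mathrm{Id}=O(\delta)$ on $B(v,\delta)$). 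Define the quasi-modes $\psi^\gamma_{v,j}(x) \coloneqq \gamma\,\chi_\delta(x)\,u_{v,j}(\gamma\Phi_v(x))$ with $\chi_\delta$ a smooth cutoff supported in $B(v,\delta)$ and equal to $1$ on $B(v,\delta/2)$; a change of variables $y=\gamma\Phi_v(x)$ shows that $q^\gamma_\Omega(\psi^\gamma_{v,j})/\|\psi^\gamma_{v,j}\|^2 = \gamma^2 E_j(T_v) + O(\gamma^2\delta + e^{-c\gamma\delta})$. The $\mathcal N^\oplus$ quasi-modes have pairwise disjoint supports across vertices and are orthonormal within each vertex up to exponentially small errors; the min-max principle applied to their span yields the desired upper bound on $E_n(Q^\gamma_\Omega)$.

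\emph{Lower bound via IMS localization.} Introduce a smooth quadratic partition $\sum_{v\in\mathcal V}\chi_v^2 + \chi_0^2 = 1$ on $\Omega$ with $\chi_v$ supported in $B(v,\delta)$ and $\chi_0$ supported away from every vertex. The IMS identity reads
\[
q^\gamma_\Omega(\psi,\psi) = \sum_{v\in\mathcal V} q^\gamma_\Omega(\chi_v\psi,\chi_v\psi) + q^\gamma_\Omega(\chi_0\psi,\chi_0\psi) - \int_\Omega\!\Bigl(\sum_v|\nabla\chi_v|^2 + |\nabla\chi_0|^2\Bigr)|\psi|^2\,dx,
\]
with the last term bounded below by $-C\delta^{-2}\|\psi\|^2$. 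On each vertex patch I transplant $\chi_v\psi$ to $U_v$ via $\Phi_v^{-1}$; the bound $D\Phi_v^{\pm 1}-\mathrm{Id} = O(\delta)$ gives $q^\gamma_\Omega(\chi_v\psi) \geq (1-C\delta)\gamma^2\langle T_v \tilde\psi_v,\tilde\psi_v\rangle_{L^2(U_v)}$ for the transplanted $\tilde\psi_v$, the $O(\delta)$ factor coming simultaneously from the metric and from the arclength perturbation of the Robin integral. Restricting in the min-max to $\psi$ orthogonal to the $n-1$ lowest quasi-modes from the upper bound excludes (up to exponentially small errors) the $n-1$ lowest eigenmodes of $T^\oplus$, promoting the lower bound to $\gamma^2 E_n(T^\oplus)\sum_v\|\chi_v\psi\|^2$. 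For $\chi_0\psi$, whose support meets only the smooth part of $\partial\Omega$, the classical smooth-boundary Robin estimate yields $q^\gamma_\Omega(\chi_0\psi) \geq (-\gamma^2 - C\gamma)\|\chi_0\psi\|^2$; since every $E_n(T^\oplus)$ with $n\leq\mathcal N^\oplus$ is strictly less than $-1$, this lies above the vertex contributions by a margin of order $\gamma^2$ and does not interfere. Combining everything yields
\[
E_n(Q^\gamma_\Omega) \geq \gamma^2 E_n(T^\oplus) - C\bigl(\gamma^2\delta + \delta^{-2}\bigr).
\]

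Balancing $\gamma^2\delta$ against $\delta^{-2}$ gives the optimal $\delta=\gamma^{-2/3}$ and the final error $O(\gamma^{4/3})$. If $\Omega$ has straight edges, $\Phi_v$ can be taken equal to a rigid motion on a fixed neighbourhood of $v$, so the $O(\delta)$ straightening error disappears; one may then keep $\delta$ bounded away from $0$, and both the truncation tail and the IMS gradient term—supported in an annulus where the rescaled $u_{v,j}(\gamma\cdot)$ has already decayed by $e^{-c\gamma\delta}$—are $O(e^{-c\gamma})$. The main technical obstacle is the sharp geometric comparison at a curved vertex: one must straighten \emph{both} $C^4$ edges incident at $v$ by a single diffeomorphism whose linear deviation on $B(v,\delta)$ is genuinely $O(\delta)$, and then propagate that bound through the Dirichlet integral \emph{and} the Robin boundary integral to obtain a multiplicative $1+O(\delta)$ error; pitted against the unavoidable $\delta^{-2}$ IMS cost, this is what dictates the $\gamma^{4/3}$ rate.
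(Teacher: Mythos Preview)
Your $O(\gamma^{4/3})$ argument for the curvilinear case follows the paper's strategy closely. Two minor technical points: (i) the paper's lower bound proceeds via Dirichlet bracketing (comparing $Q^\gamma$ with a genuine direct sum $\bigoplus_v Q^{\gamma,V}_v\oplus Q^{\gamma,V}_0$ and reading off its $n$-th eigenvalue) rather than your orthogonality-constrained min-max; your route can be made to work, but ``$\psi\perp$ quasi-modes in $L^2(\Omega)$'' only translates to ``$\tilde\psi_v\perp$ sector eigenfunctions in $L^2(U_v)$'' up to $O(\delta)$ Jacobian and cutoff errors, not the ``exponentially small'' ones you state; (ii) with $\delta=\gamma^{-2/3}$ the support of $\chi_0$ depends on $\gamma$, so you cannot invoke a fixed-domain smooth-boundary estimate directly---the paper establishes the needed bound $E_1(Q^{\gamma,V}_0)\geq -\gamma^2-C\gamma^{2\beta}$ via a tubular-coordinate argument (Lemma~\ref{thregpart}).

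There is, however, a genuine gap in your straight-edge exponential claim. You assert that with $\delta$ fixed the IMS localization term $\int_\Omega\bigl(\sum_v|\nabla\chi_v|^2+|\nabla\chi_0|^2\bigr)|\psi|^2\,dx$ is $O(e^{-c\gamma})$ because it is ``supported in an annulus where $u_{v,j}(\gamma\cdot)$ has already decayed''. But in the lower-bound min-max $\psi$ is an \emph{arbitrary} test function, not the quasi-mode, and there is no reason for $|\psi|^2$ to be small on that annulus; the IMS cost is only $O(\delta^{-2})\|\psi\|^2=O(1)\|\psi\|^2$ for fixed $\delta$. Hence your IMS lower bound yields at best $E_n(Q^\gamma)\geq \gamma^2E_n(T^\oplus)-O(1)$, which is exactly what the paper's Proposition~\ref{asvp} gives. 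The paper then obtains the exponential remainder by a different mechanism: since the edges are straight and $\chi_v$ is radial, one has $\partial_\nu\chi_v=0$ on $\partial\Omega$, so the truncated quasi-modes $\tilde\phi^{\gamma,v}_n$ satisfy the Robin boundary condition and actually belong to $D(Q^\gamma)$, with $\|(Q^\gamma-\gamma^2E_n(T_v))\tilde\phi^{\gamma,v}_n\|\leq Ce^{-c\gamma}$. A spectral-approximation lemma (Proposition~\ref{nvp}) then places at least $m_l$ eigenvalues of $Q^\gamma$ within $O(e^{-c\gamma})$ of each $\gamma^2\lambda_l$; combined with the $O(1)$ clustering already established, this pins down the eigenvalues to exponential accuracy.
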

For a precise statement see Theorem~\ref{asexp} for polygons with straight edges and Theorem~\ref{asexpcurv} for the general case. 
In addition, we show in Theorem~\ref{eigenspace} that, if $\Omega$ is a polygon with straight edges, the $\mathcal N^\oplus$ first associated eigenfuctions are localized near the convex vertices of $\Omega$. 

By Theorem~\ref{Intro_Thasvp}, we see that the conjecture stated in Open problem~\ref{Openp} becomes false if $E_2(T_v)<E_1(T_w)$ for some $v,w\in\mathcal V$, which happens for $\alpha_v$ small enough due to \eqref{KP1}.
 However, it is possible to find a setting for which the conjecture holds true.
 
 \begin{Corollaires}
Let $\Omega \subset \mathbb R^2$ be a curvilinear polygon having $L\geq 1$ convex vertices with half angles $\displaystyle \frac{\pi}{6} \leq \alpha_1\leq ...\leq \alpha_L <\displaystyle \frac{\pi}{2}$. Then, 
 \[
  E_n(Q_\Omega^\gamma)= -\frac{\gamma^2}{\sin^2\alpha_n}+O\left(\gamma^{\frac{4}{3}}\right), \quad \gamma\to+\infty,
 \]
for all $n=1,...,L$.
\end{Corollaires}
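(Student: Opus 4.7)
The plan is to obtain the corollary as a direct specialization of Theorem~\ref{Intro_Thasvp} by identifying the first $L$ eigenvalues of the model operator $T^\oplus$ explicitly in the regime where every convex vertex contributes exactly one discrete eigenvalue.

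First, I would exploit the lower bound $\alpha_v \geq \pi/6$ that holds at every convex vertex. Under this assumption, formula \eqref{KP2} gives $\mathcal{N}_v = \mathcal{N}(T_v,-1) = 1$ for each $v \in \mathcal{V}$, so the only discrete eigenvalue of $T_v$ lying below the bottom $-1$ of its essential spectrum is the principal eigenvalue $E_1(T_v) = -1/\sin^2\alpha_v$. Summing over the $L$ convex vertices yields $\mathcal{N}^\oplus = L$, which in particular means that Theorem~\ref{Intro_Thasvp} is applicable to each of the indices $n=1,\ldots,L$ appearing in the statement.

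Second, I would determine the ordering of the eigenvalues of $T^\oplus = \bigoplus_{v\in\mathcal{V}} T_v$ below the common essential-spectrum threshold $-1$. These eigenvalues form the multiset $\{-1/\sin^2\alpha_v : v \in \mathcal{V}\}$, and since the function $\alpha \mapsto -1/\sin^2\alpha$ is strictly increasing on $(0,\pi/2)$, the ordering $\alpha_1 \leq \alpha_2 \leq \cdots \leq \alpha_L$ of the half-angles translates directly into
\[
E_n(T^\oplus) = -\frac{1}{\sin^2\alpha_n}, \qquad n=1,\ldots,L.
\]
Substituting this identification into the estimate $E_n(Q_\Omega^\gamma) = E_n(T^\oplus)\gamma^2 + O(\gamma^{4/3})$ furnished by Theorem~\ref{Intro_Thasvp} produces the claimed asymptotics.

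There is no real obstacle once Theorem~\ref{Intro_Thasvp} is in hand; the content of the corollary is essentially a bookkeeping of which eigenvalues of $T^\oplus$ occupy the first $L$ positions. The role of the hypothesis $\alpha_1 \geq \pi/6$ is precisely to avoid the phenomenon that causes the original Open Problem~\ref{Openp} to fail: without it, \eqref{KP1} allows a single sharp vertex to generate several eigenvalues below $-1/\sin^2\alpha_w$ of a less sharp vertex $w$, in which case $E_n(T^\oplus)$ would no longer match $-1/\sin^2\alpha_n$ for small $n$.
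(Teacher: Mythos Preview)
Your proof is correct and follows exactly the approach the paper takes: the paper simply remarks that the corollary follows immediately from \eqref{KP2}, which is precisely the observation that each $T_v$ contributes a single discrete eigenvalue $-1/\sin^2\alpha_v$, so that $\mathcal N^\oplus=L$ and $E_n(T^\oplus)=-1/\sin^2\alpha_n$, after which Theorem~\ref{Intro_Thasvp} applies.
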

The proof follows immediatly from \eqref{KP2}. In particular, we have the following asymptotics for regular polygons.

\begin{Corollaires}
 Let $\Omega\subset \mathbb R^2$ be a regular polygon having $L\geq3$ edges. Then,
 \[
  E_n(Q_\Omega^\gamma)=-\frac{\gamma^2}{\sin^2\frac{(L-2)\pi}{2L}}
  +O\left(e^{-c\gamma}\right),\quad \gamma\to+\infty,
 \]
for all $n=1,...,L$.
\end{Corollaires}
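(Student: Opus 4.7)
The plan is to reduce this corollary to a direct application of Theorem~\ref{Intro_Thasvp} in the straight-edge case, combined with the already-known spectral information on the model sector operators $T_v$. First I would observe that a regular polygon $\Omega$ with $L\ge 3$ edges is, by definition, a polygon with straight edges, so the improved remainder $r(\gamma)=O(e^{-c\gamma})$ from Theorem~\ref{Intro_Thasvp} is available. Next I would compute the half-angle at each vertex: in a regular $L$-gon the interior angle is $(L-2)\pi/L$, so every vertex has half-angle
\[
\alpha_v=\frac{(L-2)\pi}{2L},
\]
and a quick check shows $\alpha_v\in[\pi/6,\pi/2)$ for every $L\ge 3$ (with equality at $L=3$). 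In particular every vertex is convex, so $\mathcal V$ consists of all $L$ vertices of $\Omega$.

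Then I would invoke the two facts recalled about $T_v$ after \eqref{KP1}--\eqref{KP2}: since $\alpha_v\in[\pi/6,\pi/2)$, the identity \eqref{KP2} gives $\mathcal N_v=\mathcal N(T_v,-1)=1$ for each vertex, so $\mathcal N^\oplus=L$; and the first eigenvalue is $E_1(T_v)=-1/\sin^2\alpha_v$, which takes the same value at every vertex by symmetry. Consequently the $L$ lowest eigenvalues of the direct sum
\[
T^\oplus=\bigoplus_{v\in\mathcal V}T_v
\]
are all equal, namely
\[
E_1(T^\oplus)=\dots=E_L(T^\oplus)=-\frac{1}{\sin^2\frac{(L-2)\pi}{2L}}.
\]

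Finally I would apply Theorem~\ref{Intro_Thasvp} to each $n\in\{1,\dots,L\}=\{1,\dots,\mathcal N^\oplus\}$ in the straight-edge regime, which yields
\[
E_n(Q^\gamma_\Omega)=E_n(T^\oplus)\gamma^2+O(e^{-c\gamma})=-\frac{\gamma^2}{\sin^2\frac{(L-2)\pi}{2L}}+O(e^{-c\gamma}),
\]
as claimed. There is essentially no analytic obstacle: all the heavy lifting is hidden in Theorem~\ref{Intro_Thasvp} and in the sector estimates \eqref{KP1}--\eqref{KP2} borrowed from \cite{KP,LP}. The only thing to verify carefully is the inclusion $\alpha_v\in[\pi/6,\pi/2)$, which is what both ensures exact multiplicity one at each vertex (so that all first $L$ eigenvalues of $T^\oplus$ coincide) and makes the range of applicability of \eqref{KP2} cover precisely the regular-polygon case.
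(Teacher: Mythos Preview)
Your proposal is correct and follows essentially the same route as the paper: the corollary is stated as an immediate consequence of the preceding one (itself deduced from \eqref{KP2}) specialized to the regular-polygon setting, with the straight-edge case of Theorem~\ref{Intro_Thasvp} providing the exponential remainder. You have simply spelled out the angle computation $\alpha_v=(L-2)\pi/(2L)\in[\pi/6,\pi/2)$ and the resulting identification $\mathcal N^\oplus=L$, $E_n(T^\oplus)=-1/\sin^2\alpha_v$ that the paper leaves implicit.
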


In Theorem~\ref{weylcurvpol} we discuss the asymptotic behavior of the eigenvalue counting function of $Q^\gamma_\Omega$ as $\gamma\to+\infty$. 
\begin{theorem}
\label{Intro_asweyl}
The asymptotics \eqref{weyl1} and \eqref{weyl2} hold true when $\Omega$ is a curvilinear polygon with respectively $R_1(\gamma)=O(\gamma^\theta)$ for any $\theta\in(0,\frac{1}{2})$ and $R_2(\gamma)=O(\gamma^{\frac{1}{4}})$.
\end{theorem}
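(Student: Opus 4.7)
The plan is a Dirichlet--Neumann bracketing, separating the vertices from the smooth part of $\partial\Omega$ and reducing the latter to the smooth-boundary Weyl analysis of \cite{HKR}. Fix a scale $\ell=\ell(\gamma)\to 0$ to be optimized at the end. Around each vertex $v$ of $\Omega$ (convex or not) let $V_v\subset\Omega$ be a neighborhood of diameter $\sim\ell$, and set $\Omega_{\mathrm s}\coloneqq\Omega\setminus\bigcup_v\overline{V_v}$; then $\partial\Omega_{\mathrm s}\cap\partial\Omega$ is smooth of length $|\partial\Omega|-O(\ell)$, while the internal cuts $\Sigma\coloneqq\partial\Omega_{\mathrm s}\setminus\partial\Omega$ lie in the interior of $\Omega$ at distance $\gtrsim\ell$ from every vertex. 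Imposing a Dirichlet, respectively Neumann, condition on $\Sigma$ produces the standard min--max bracketing
\[
\mathcal N\bigl(Q^{\gamma,\mathrm D}_{\Omega_{\mathrm s}},\lambda\bigr)+\sum_v\mathcal N\bigl(Q^{\gamma,\mathrm D}_{V_v},\lambda\bigr)\le\mathcal N(Q^\gamma_\Omega,\lambda)\le\mathcal N\bigl(Q^{\gamma,\mathrm N}_{\Omega_{\mathrm s}},\lambda\bigr)+\sum_v\mathcal N\bigl(Q^{\gamma,\mathrm N}_{V_v},\lambda\bigr),
\]
the Robin condition at parameter $\gamma$ being retained on $\partial\Omega$.

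On $\Omega_{\mathrm s}$ I would adapt the proof of \cite{HKR} for the bracketed operator. Its key ingredient is the concentration of the low-lying eigenfunctions in a tubular layer of width $\sim\gamma^{-1/2}$ around $\partial\Omega$, and it extends to the bracketed problem provided $\ell\gg\gamma^{-1/2}$, so that the cuts $\Sigma$ remain outside this layer. This yields the main terms of \eqref{weyl1} and \eqref{weyl2} with the integrals restricted to $\partial\Omega_{\mathrm s}\cap\partial\Omega$, plus a remainder $R_{\mathrm s}(\gamma,\ell)$. On each corner, the dilation $x\mapsto\gamma x$ identifies $Q^{\gamma,\bullet}_{V_v}$ with $\gamma^2 Q^{1,\bullet}_{\gamma V_v}$ on a truncated tangent sector of size $\gamma\ell$. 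Combining the spectrum of $T_v$ (discrete part of cardinality $\mathcal N_v$ below $-1$, essential part $[-1,+\infty)$), Agmon-type exponential decay of its sub-essential eigenfunctions, and a Weyl count along the two straight sides of length $\sim\gamma\ell$, I would prove
\[
\mathcal N\bigl(Q^{\gamma,\bullet}_{V_v},E\gamma^2\bigr)=\mathcal N_v+\frac{2\gamma\ell\sqrt{E+1}}{\pi}+r_v(\gamma,\ell),
\]
and an analogous expression near the threshold $-\gamma^2+\lambda\gamma$. The $O(\gamma\ell)$ term of the corner count compensates the piece of the boundary integral missing from the smooth part, so summing the two sides of the bracketing returns the full main terms of \eqref{weyl1}, \eqref{weyl2} plus a total remainder $R_{\mathrm s}(\gamma,\ell)+\sum_v r_v(\gamma,\ell)+O(1)$.

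Optimizing $\ell$ in the remainder then yields the claimed bounds, namely $R_1(\gamma)=O(\gamma^\theta)$ for any $\theta\in(0,\tfrac{1}{2})$ and $R_2(\gamma)=O(\gamma^{1/4})$. The hardest part will be producing a quantitative version of the smooth remainder $R_{\mathrm s}(\gamma,\ell)$: the estimate of \cite{HKR} is only stated as $O(1)$ for a fixed smooth domain, and one must revisit its proof to extract the polynomial dependence of the implicit constants on the geometric scale $\ell$ as the cuts $\Sigma$ approach the vertices. A second technical point is the corner count: on the truncated sector $\gamma V_v$ one has to combine a one-dimensional Weyl asymptotics along the straight sides with the known discrete spectrum of the infinite sector at the vertex, uniformly as $\gamma\ell\to\infty$.
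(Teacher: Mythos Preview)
Your strategy---bracket to separate the corners from a smooth remainder and analyze each piece---is the paper's, but two choices in your execution make it harder than necessary. First, the normal concentration width for Robin eigenfunctions is $\sim\gamma^{-1}$, not $\gamma^{-1/2}$; the paper accordingly takes the cutoff scale $\ell=\gamma^{-\beta}$ with $\beta\in(\tfrac12,1)$, the \emph{opposite} regime from your $\ell\gg\gamma^{-1/2}$. With $\ell$ this small the boundary length removed at the corners is $O(\gamma^{-\beta})$, so the missing piece of the main term is already $O(\gamma^{1-\beta})$, of the order of the target remainder. The paper therefore needs only the crude bound $\mathcal N(Q^{\gamma}_{v,2\gamma^{-\beta}},E\gamma^2)=O(\gamma^{1-\beta})$ on the corner pieces (Proposition~\ref{TSprop}), and your two-term corner asymptotic with its compensation argument is never required; with your larger $\ell$ you would genuinely have to prove it, and the straightening errors (see below) would then be of order $\gamma\ell^2\gg 1$, which is a real obstacle. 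Second, for the upper bound on $\mathcal N$ the paper uses an IMS partition of unity rather than Neumann on the internal cuts, at the cost of a localization potential $V\sim\ell^{-2}=\gamma^{2\beta}$ that is carried through the smooth-part analysis; for that analysis it does not invoke~\cite{HKR} but re-derives the one-dimensional bracketing of~\cite{Pan} directly on the $\gamma$-dependent domain, which is precisely the quantitative version you anticipate (Lemma~\ref{thregpart}). Finally, your dilation $x\mapsto\gamma x$ on $V_v$ presupposes straight sides; for a curvilinear corner one must first straighten via the diffeomorphism $F_v$, incurring multiplicative errors of order $\ell$ in the form (Lemmas~\ref{Lemmacov} and~\ref{min-maxTS}).
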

This result particularly means that the vertices do not contribute to the Weyl law at the leading order.

Finally in Section~\ref{prospects}, we discuss the second question in Open problem~\ref{Openp}. We prove that for each fixed $j\geq1$,
\begin{align}
\label{Intro_propprospects}
E_{\mathcal N^\oplus+j}(Q^\gamma_\Omega)=-\gamma^2+o(\gamma^2), \text{ as }\gamma\to +\infty,
\end{align}
see Proposition~\ref{nexteigen}.

The main tool in our proofs is the min-max characterization of the eigenvalues. The proof of the asymptotics of the first eigenvalues uses the idea of \cite{BD} in which a Schr\"odinger with magnetic field acting on curvilinear polygons is considered. It mainly relies on the construction of weak quasi-modes, thanks to the eigenfunctions of the model operator. The estimates on these weak quasi-modes are obtain using their decay property proved in \cite{KP}. In the particular case of a polygon with straight edges, these functions are true quasi-modes, namely they belong to the domain of the operator $Q^\gamma_\Omega$. 
It will allow us to use a spectral approximation result in order to prove the exponential decay of the remainder in the asymptotics and then to use a result of closeness of subspaces, see e.g \cite{HS}, to prove that linear combinations of quasi-modes are exponentially close, in a sense, to the associated eigenfunctions. 
To prove the Weyl-type asymptotics, we first use a partition of unity and a Dirichlet bracketing in order to remove the corners from the domain $\Omega$. 
We are then lead to study separately the corners and the rest of $\Omega$. 
We show that the corners do not contribute to the asymptotics at the leading order using the same kind of arguments as \cite{KK}. Then, the first term in the asymptotics comes from the study of the rest of the domain. To prove this, we adapt the sketch of the proof of \cite{Pan}. The idea, inspired by the proof of a Weyl law of a Schr\"odinger operator in \cite{RS}, consists in a reduction to a well chosen neighborhood of the boundary. The proof of the asymptotics \eqref{Intro_propprospects} directly follows from a combination of the preceding results.

In Section~\ref{Prel}, we recall some properties of one-dimensional operators and of Robin Laplacians acting on infinite sectors as they will play a crucial role in our study. We also introduce the model operator $T^\oplus$.
Section~\ref{RLpol} is devoted to the study of polygons with straight edges: we prove Theorem~\ref{Intro_Thasvp} for the particular case of polygons and the result on the associated eigenfunctions.
Section~\ref{curvpol} is devoted to the study of general curvilinear polygons: we prove Theorem~\ref{Intro_Thasvp} for curvilinear polygons and Theorem~\ref{Intro_asweyl}.
In Section~\ref{prospects}, we give the proof of the asymptotics \eqref{Intro_propprospects}.
Finally in Appendix~\ref{speccorol}, we recall the proof of a spectral approximation result used in Section~\ref{RLpol}.

\section{Preliminaries}\label{Prel}

\subsection{Min-max principle}
\textbf{General notation.} If $A$ is a self-adjoint, semibounded from below operator acting on a Hilbert space $\mathcal H$ of domain $D(A)$, 
we denote by $a$ the associated sesquilinear form of domain $D(a)$.
 For $\lambda \in\mathbb R$, $\mathcal N(A,\lambda)$ denotes the number of eigenvalues, 
counting the multiplicities, 
of $A$ in $(-\infty,\lambda)$ if $\Specess(A)\cap (-\infty,\lambda)=\emptyset$, 
and $\mathcal N(A,\lambda)=+\infty$ otherwise. 
We denote by $\Spec(A)$, $\Specdisc(A)$, $\Specess(A)$ respectively the spectrum of $A$, its discrete spectrum and its essential spectrum. 
By $E_n(A)$ we denote its $n$th discrete eigenvalue, when ordered in the non-decreasing order and counting the multiplicities.

Let $A$ be a self-adjoint operator acting on a Hilbert space $\mathcal H$ of infinite dimension.
We assume that $A$ is semi-bounded from below, $A\ge -c$
, $c\in \mathbb R$, and denote
\[
\Sigma:=\begin{cases}
\inf \Specess A, & \text{ if }\Specess (A)\ne \emptyset,\\
+\infty, & \text{ if }\Specess (A)=\emptyset.
\end{cases}
\]
Recall that $D(a)$, equipped with the scalar product 
$\langle u,v\rangle_a:=a(u,v)+(c+1)\langle u,v\rangle$,
is a Hilbert space.
The following result, giving a variational characterization of eigenvalues,  is a standard tool of the spectral theory, see e.g. \cite[Section XIII.1]{RS}.
\begin{theorem}[Min-max principle]
Let $n \in \mathbb N$ and $D$ be a dense subspace of the Hilbert space $D(a)$. 
Let $\Lambda_n(A)$
be the $n$th \emph{Rayleigh quotient} of $A$, which is defined by
\[
\Lambda_n(A):=\sup_{\psi_1,...,\psi_{n-1} \in \mathcal H}\inf_{\substack{\varphi \in D,\varphi \neq 0\\ \varphi \perp \psi_j, j=1,...,n-1}}\frac{a(\varphi,\varphi)}{\langle \varphi,\varphi \rangle}\equiv \inf_{\substack{G\subset D \\ \dim G=n}} \sup_{\substack{\varphi \in G\\\varphi \neq 0}} \frac{a(\varphi,\varphi)}{\langle \varphi,\varphi \rangle},
\]
then one and only one of the following assertions is true:
\begin{enumerate}
\item $\Lambda_n(A)<\Sigma$ and $E_n(A)=\Lambda_n(A)$.
\item $\Lambda_n(A)=\Sigma$ and $\Lambda_m(A)=\Lambda_n(A)$ for all $m\ge n$.
\end{enumerate}
\end{theorem}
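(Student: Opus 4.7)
The plan is to invoke the spectral theorem for $A$ and reduce the variational problem on the core $D$ to one on the full form domain $D(a)$. First I would observe that on $D(a)\setminus\{0\}$ the Rayleigh functional $\varphi\mapsto a(\varphi,\varphi)/\langle \varphi,\varphi\rangle$ is continuous in the form norm $\|\cdot\|_a$ induced by $\langle\cdot,\cdot\rangle_a$; since $D$ is dense in $D(a)$ for this norm, the infima over $D$ and over $D(a)$ in the definition of $\Lambda_n(A)$ coincide. This reduction is essential because the spectral subspaces used below live in $D(a)$ but not necessarily in the chosen core $D$. With $A=\int\lambda\,dE_\lambda$ and $a(\varphi,\varphi)=\int\lambda\,d\langle E_\lambda\varphi,\varphi\rangle$ on $D(a)$, two easy observations set the stage: $\Lambda_n(A)\leq \Lambda_{n+1}(A)$ by adding more orthogonality constraints, and $\Lambda_n(A)\leq \Sigma$ because for every $\varepsilon>0$ the subspace $\Ran E([\Sigma-\varepsilon,\Sigma+\varepsilon])$ is infinite-dimensional, so one can always find a unit vector in it orthogonal to the prescribed $\psi_j$'s with Rayleigh quotient at most $\Sigma+\varepsilon$.

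The substance is the case $\Lambda_n(A)<\Sigma$, which I would settle by two inequalities. For $E_n(A)\leq \Lambda_n(A)$, fix $\varepsilon>0$ with $\Lambda_n(A)+\varepsilon<\Sigma$ and set $\mathcal G_\varepsilon\coloneqq \Ran E([-c,\Lambda_n(A)+\varepsilon])$; since $\Specess(A)\cap(-\infty,\Sigma)=\emptyset$, the subspace $\mathcal G_\varepsilon$ is finite-dimensional and spanned by eigenvectors with eigenvalues $\leq \Lambda_n(A)+\varepsilon$. If $\dim\mathcal G_\varepsilon<n$, picking $\psi_1,\dots,\psi_{n-1}$ to contain an orthonormal basis of $\mathcal G_\varepsilon$ would force every admissible $\varphi$ into $\mathcal G_\varepsilon^\perp=\Ran E((\Lambda_n(A)+\varepsilon,+\infty))$, giving $a(\varphi,\varphi)\geq (\Lambda_n(A)+\varepsilon)\|\varphi\|^2$ and contradicting the definition of $\Lambda_n(A)$. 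Hence $\dim\mathcal G_\varepsilon\geq n$, so $A$ has at least $n$ eigenvalues below $\Lambda_n(A)+\varepsilon$, and letting $\varepsilon\to 0$ yields $E_n(A)\leq \Lambda_n(A)$. For the reverse, the existence of $E_n(A)$ is now secured; taking an orthonormal family $\varphi_1,\dots,\varphi_n$ of eigenvectors associated to $E_1(A),\dots,E_n(A)$ and applying the equivalent inf-sup form over the $n$-dimensional test space $G=\Span(\varphi_1,\dots,\varphi_n)$, on which $a$ is diagonal with entries $E_1(A)\leq\dots\leq E_n(A)$, yields $\Lambda_n(A)\leq \sup_{\varphi\in G\setminus\{0\}} a(\varphi,\varphi)/\|\varphi\|^2=E_n(A)$.

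The dichotomy then follows from monotonicity: if $\Lambda_n(A)=\Sigma$, then $\Sigma\leq \Lambda_m(A)\leq \Sigma$ for all $m\geq n$, while if $\Lambda_n(A)<\Sigma$ the previous paragraph identifies $\Lambda_n(A)$ with $E_n(A)$. The main obstacle I foresee is the careful handling of the semibounded (not positive) situation: this is exactly why the form inner product is shifted by $c+1$, ensuring that $\|\cdot\|_a$ really is a norm and that the continuity-density argument in the first step goes through cleanly. Once that reduction from $D$ to $D(a)$ is available, the remainder is routine spectral-projection bookkeeping.
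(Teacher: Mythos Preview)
The paper does not actually prove this theorem: it is stated in the preliminaries as a standard tool and referenced to \cite[Section XIII.1]{RS}, with no argument given. So there is no ``paper's own proof'' to compare against.

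Your sketch is the standard proof and is correct in outline. One small point worth tightening in the first step: when you pass from the core $D$ to the full form domain $D(a)$, the orthogonality constraints $\varphi\perp\psi_j$ are in the ambient Hilbert space $\mathcal H$, not in the form inner product. To approximate a given $\varphi\in D(a)$ satisfying these constraints by elements of $D$ that also satisfy them, you should approximate $\varphi$ by some $\varphi_k\in D$ in the form norm and then subtract off the (finite-rank) $\mathcal H$-orthogonal projection onto $\Span(\psi_1,\dots,\psi_{n-1})$; the correction is small in $\mathcal H$ but you need it to stay in $D$, which is not automatic unless the $\psi_j$ themselves lie in $D$. The clean way around this is to observe that in the $\sup\inf$ formulation the supremum is attained on $\psi_j\in D(a)$ (indeed on eigenvectors once you have them), and for such $\psi_j$ the projected approximant remains in $D(a)$, which suffices after you have already shown equality of the $D$ and $D(a)$ versions of the \emph{unconstrained} inf. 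Alternatively, work directly with the equivalent $\inf\sup$ over $n$-dimensional subspaces, where the density argument is cleaner: any $n$-dimensional $G\subset D(a)$ can be approximated by an $n$-dimensional $G'\subset D$ with arbitrarily close maximal Rayleigh quotient. Either way the conclusion stands; this is just a point where a fully written-out proof needs one more sentence than your sketch provides.
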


\subsection{Auxiliary one-dimensional operators}\label{auxop}

In this section, we recall some results on one-dimensional Laplacians acting on an interval.

\begin{proposition}{\cite[Lemma A.2]{HP}}\label{LemmaHP}
For $\gamma>0$ and $l>0$, denote by $\mathscr D_{\gamma,l}$ the operator acting on $L^2(0,l)$ as $f\mapsto -f''$ with
\[
 D(\mathscr D_{\gamma,l})\coloneqq\{f\in H^2(0,l), -f'(0)-\gamma f(0)=f(l)=0\}.
\]
Then, $E_1(\mathscr D_{\gamma,l})<0$ iff $\gamma l >1$, and in that case it is the unique negative eigenvalue. Moreover, for a fixed $l>0$ one has
\[
 E_1(\mathscr D_{\gamma,l})=-\gamma^2+4\gamma^2 e^{-2\gamma l} +O( e^{-4\gamma l}), \quad\text{as}\quad \gamma\to +\infty.
\] 
\end{proposition}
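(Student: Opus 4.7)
The plan is to solve the eigenvalue problem explicitly, reduce to a transcendental equation for the square root of the (negative of the) eigenvalue, and then expand this equation asymptotically for large $\gamma$.

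First, I would parametrize a negative eigenvalue as $E=-k^{2}$ with $k>0$, and look for solutions $f\in H^{2}(0,l)$ of $-f''=-k^{2}f$ that satisfy $f(l)=0$. The space of such solutions is one-dimensional and spanned by $f(x)=\sinh\bigl(k(l-x)\bigr)$. Inserting this into the Robin condition $-f'(0)-\gamma f(0)=0$ gives, after dividing by the nonzero factor $\sinh(kl)$ (using $k>0$), the transcendental equation
\[
k=\gamma\tanh(kl).
\]
Thus the negative spectrum of $\mathscr{D}_{\gamma,l}$ is in bijection with the positive roots of this equation.

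Next, I would study this equation analytically. Setting $g(k)\coloneqq \gamma\tanh(kl)-k$, one has $g(0)=0$, $g'(0)=\gamma l-1$, $g'(k)=\gamma l\operatorname{sech}^{2}(kl)-1$, and $g''(k)=-2\gamma l^{2}\tanh(kl)\operatorname{sech}^{2}(kl)<0$ for $k>0$, so $g$ is strictly concave on $(0,\infty)$ with $g(k)\to-\infty$. Concavity together with the sign of $g'(0)$ then gives: if $\gamma l\le 1$ then $g$ is non-increasing with $g(k)<0$ for $k>0$, so there is no negative eigenvalue; if $\gamma l>1$ then $g$ has a unique positive zero, hence $\mathscr{D}_{\gamma,l}$ has exactly one negative eigenvalue. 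This proves the first part of the proposition.

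Finally, for the asymptotics I would observe that since $0<\tanh(kl)<1$, any solution satisfies $k<\gamma$, and the equation $k=\gamma\tanh(kl)$ forces $k$ to be close to $\gamma$ when $\gamma$ is large. Writing $k=\gamma(1-\varepsilon)$ with $\varepsilon=\varepsilon(\gamma)\in(0,1)$ and using
\[
\tanh(x)=1-2e^{-2x}+O\!\bigl(e^{-4x}\bigr),\qquad x\to+\infty,
\]
the equation becomes
\[
1-\varepsilon=1-2e^{-2\gamma l(1-\varepsilon)}+O\!\bigl(e^{-4\gamma l(1-\varepsilon)}\bigr).
\]
A bootstrap argument then yields $\varepsilon=2e^{-2\gamma l}+O\!\bigl(e^{-4\gamma l}\bigr)$: one first gets $\varepsilon=O(e^{-2\gamma l})$, which makes $e^{-2\gamma l\varepsilon}=1+O(\gamma e^{-2\gamma l})$, and reinserting gives the stated expansion. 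Squaring,
\[
E_{1}(\mathscr{D}_{\gamma,l})=-k^{2}=-\gamma^{2}(1-\varepsilon)^{2}=-\gamma^{2}+2\gamma^{2}\varepsilon+O(\gamma^{2}\varepsilon^{2})=-\gamma^{2}+4\gamma^{2}e^{-2\gamma l}+O\!\bigl(e^{-4\gamma l}\bigr),
\]
which is the claimed asymptotics. The only delicate point is the bootstrap bookkeeping in controlling the factor $e^{2\gamma l\varepsilon}$, where one must verify that the polynomial prefactor in $\gamma$ is absorbed into the exponential remainder at the stated order.
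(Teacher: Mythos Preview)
The paper does not give its own proof of this proposition; it merely recalls the result from \cite[Lemma~A.2]{HP}. Your approach---reducing to the transcendental equation $k=\gamma\tanh(kl)$ via explicit solution of the ODE, then a concavity argument for existence/uniqueness of the positive root, then an asymptotic bootstrap---is the standard one and is essentially what the proof in \cite{HP} does.

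One small point worth noting: your final remark about ``absorbing the polynomial prefactor'' is in fact the one place where the bookkeeping does not quite close as stated. A careful expansion gives $\varepsilon = 2e^{-2\gamma l} + O(\gamma e^{-4\gamma l})$ rather than $O(e^{-4\gamma l})$, and consequently the remainder in $E_1$ carries a polynomial factor, $O(\gamma^3 e^{-4\gamma l})$. This is harmless for every application in the present paper (only the leading correction matters), and the $O(e^{-4\gamma l})$ in the statement should be read in the common loose sense of ``$O(\gamma^N e^{-4\gamma l})$ for some $N$''; but strictly speaking the polynomial prefactor is \emph{not} absorbed at the order written.
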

 
\begin{proposition}{\cite[Lemma 3]{Pan}}\label{LemmaPan}
 For $\gamma,\beta,l>0$, denote by $\mathscr R_{\gamma,\beta,l}$ the operator acting on $L^2(0,l)$ as $f\mapsto -f''$ with 
 \[
  D(\mathscr R_{\gamma,\beta,l})\coloneqq\{ f\in H^2(0,l), -f'(0)-\gamma f(0)= f'(l)-\beta f(l)=0\}.
 \]
If $\gamma>2\beta$ and $\gamma l>1$ then $E_1(\mathscr R_{\gamma,\beta,l})$ is the unique negative eigenvalue and 
\[
 -\gamma^2-  123 \gamma^2e^{-2\gamma l} < E_1(\mathscr R_{\gamma,\beta,l}) < -\gamma^2.
\]
\end{proposition}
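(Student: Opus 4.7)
The plan is to reduce the eigenvalue problem to a transcendental equation in the parameter $k := \sqrt{-E}$ and then analyse that equation quantitatively. Any negative eigenvalue $E = -k^2$ with $k>0$ corresponds to a non-trivial solution of $-f'' = Ef$ on $(0,l)$ of the form $f(x) = A e^{-kx} + B e^{kx}$. Imposing the Robin condition at $0$ gives $B/A = (k-\gamma)/(k+\gamma)$; imposing the Robin condition at $l$ and eliminating $B/A$ then yields the transcendental equation
\[
H(k) := (k-\beta)(k-\gamma) - (k+\beta)(k+\gamma)\,e^{-2kl} = 0,
\]
so that negative eigenvalues of $\mathscr R_{\gamma,\beta,l}$ are in bijection with positive zeros of $H$.

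For existence and the upper bound $E_1 < -\gamma^2$ I observe that $H(\gamma) = -2\gamma(\gamma+\beta)\,e^{-2\gamma l} < 0$ while $H(k)\to +\infty$ as $k\to +\infty$, so the intermediate value theorem delivers a root $k^\star > \gamma$, and hence an eigenvalue $E = -(k^\star)^2 < -\gamma^2$. For uniqueness of the negative eigenvalue I would track the sign of $H$ on $(0,\beta)$, $(\beta,\gamma)$ and $(\gamma,+\infty)$: on $(\beta,\gamma)$ one has $H<0$ for free since $(k-\beta)(k-\gamma)<0$ there while the other term is positive, and the hypotheses $\gamma > 2\beta$ and $\gamma l > 1$ would be invoked to preclude additional roots in $(0,\beta)$ and in $(\gamma,+\infty)$ via a monotonicity comparison of the ratio $(k-\beta)(k-\gamma)/[(k+\beta)(k+\gamma)]$ with $e^{-2kl}$. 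This uniqueness step is the most delicate place where both hypotheses are genuinely needed.

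For the lower bound I would parametrise $k^\star = \gamma\sqrt{1+\varepsilon}$ with $\varepsilon > 0$ and substitute into $H(k^\star)=0$. A Taylor expansion in $\varepsilon$ and $\beta/\gamma$ gives the leading balance
\[
\gamma^2 \varepsilon \;\sim\; 4\gamma^2\, e^{-2\gamma l},
\]
which is consistent with the purely Dirichlet analogue of Proposition~\ref{LemmaHP}. To convert this heuristic into the explicit inequality $\varepsilon < 123\,e^{-2\gamma l}$, I would evaluate $H$ at the trial point $k_0 := \gamma\sqrt{1+123\,e^{-2\gamma l}}$ and verify that $H(k_0) > 0$, which forces $k^\star < k_0$ and yields the announced lower bound $-(k_0)^2 < E_1$. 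The hypothesis $\gamma > 2\beta$ provides the clean bound $\beta/\gamma < 1/2$ used to control the $\beta$-dependent subleading terms, while $\gamma l > 1$ makes $e^{-2\gamma l} < e^{-2}$ small enough that the $\varepsilon^2$ remainders are harmless. The main obstacle is not conceptual but bookkeeping: producing a clean, explicit numerical constant while uniformly controlling all the $\beta$- and $\varepsilon$-correction terms using only the two stated hypotheses.
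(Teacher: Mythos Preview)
The paper does not prove this proposition at all: it is quoted verbatim as \cite[Lemma 3]{Pan} and used as a black box in the estimates of Section~\ref{Neumann}. So there is no proof in the paper to compare against.

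Your approach is the natural one and is essentially how the result is obtained in the cited reference: write the general solution of $-f''=-k^2 f$, impose the two Robin conditions, and study the resulting transcendental equation $H(k)=0$. The existence/upper-bound step via $H(\gamma)<0$ and $H(+\infty)=+\infty$ is correct. Two places remain only sketched. First, the uniqueness argument: on $(0,\beta)$ both $(k-\beta)(k-\gamma)>0$ and $(k+\beta)(k+\gamma)e^{-2kl}>0$, so you actually need a quantitative comparison there, and on $(\gamma,+\infty)$ you must show $H$ is monotone (or at least has no second zero); both require a genuine calculation using $\gamma>2\beta$ and $\gamma l>1$, not just a sign check. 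Second, the lower bound: the idea of evaluating $H$ at $k_0=\gamma\sqrt{1+123\,e^{-2\gamma l}}$ and checking $H(k_0)>0$ is exactly right, but the bookkeeping you mention is the entire content of the lemma---the constant $123$ is not canonical and arises from crude but explicit bounds on the correction terms under the stated hypotheses. What you have written is a correct roadmap, not yet a proof.
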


\subsection{Robin Laplacian on infinite sectors}\label{infiniteSect}

For $\alpha\in(0,\pi)$, we define $U(\alpha)$ the infinite sector of opening $2\alpha$,
 \[
U(\alpha)\coloneqq\{(x_1,x_2)\in\mathbb R^2, \lvert \arg(x_1+ix_2)\rvert <\alpha\}.
\]
Denote by $T^{\gamma,\alpha}$ the Robin Laplacian acting on $L^2\left(U(\alpha)\right)$ 
as $T^{\gamma,\alpha} \psi= -\Delta \psi$ on $U(\alpha)$, 
with the Robin boundary condition $\partial_\nu \psi=\gamma \psi$ 
on $\partial U(\alpha)$ 
where $\nu$ stands for the unit outward normal and $\gamma >0$. 
The operator $T^{\gamma,\alpha}$ is defined as the unique self-adjoint operator associated with the sesquilinear form
\begin{align*}
t^{\gamma,\alpha}(\psi,\psi)=\int_{U(\alpha)} \lvert \nabla \psi\rvert ^2 dx-\gamma \int_{\partial U(\alpha)} \lvert \psi \rvert^2 ds, \quad \psi \in H^1\left(U(\alpha)\right).
\end{align*}
As mentioned above, this operator will play a particular role in our study and we will use some of its spectral properties gathered in \cite{LP, KP}.
For the reader's convenience, we recall some of them in this section.
\begin{theorem}\label{propKP}
For all $\alpha \in(0,\pi)$ and $\gamma>0$, $\Specess(T^{\gamma,\alpha})=[-\gamma^2,+\infty)$ and the discrete spectrum of $T^{\gamma,\alpha}$ is non-empty if and only if $\alpha<\displaystyle\frac{\pi}{2}$. Moreover, 
\begin{itemize}
\item if $\alpha \in (0,\displaystyle \frac{\pi}{2})$, then 
$
E_1(T^{\gamma,\alpha})=-\displaystyle\frac{\gamma^2}{\sin^2\alpha},
$
$u(x_1,x_2)=\exp(-\gamma\displaystyle\frac{x_1}{\sin \alpha})$ is an associated eigenfunction, and $\mathcal N(T^{\gamma,\alpha},-\gamma^2) <+\infty;$
\item for all $\alpha\in[\displaystyle\frac{\pi}{6},\displaystyle\frac{\pi}{2})$, we have $\mathcal N(T^{\gamma,\alpha},-\gamma^2)=1.$
\end{itemize}
\end{theorem}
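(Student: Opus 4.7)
My plan is to reduce to $\gamma=1$ by scaling, identify the essential spectrum via Persson's formula, exhibit an explicit ground state, and finally control the multiplicity of discrete eigenvalues below $-\gamma^2$. The unitary dilation $(U_\gamma\psi)(x)\coloneqq\gamma\psi(\gamma x)$ satisfies $U_\gamma T^{\gamma,\alpha}U_\gamma^*=\gamma^2 T^{1,\alpha}$, so it is enough to treat the case $\gamma=1$. For the essential spectrum, Persson's formula
\[
\inf\Specess(T^{1,\alpha})=\lim_{R\to\infty}\inf\bigl\{t^{1,\alpha}(\psi,\psi):\psi\in C_c^\infty(U(\alpha)\setminus\overline{B_R}),\ \|\psi\|=1\bigr\},
\]
gives the upper bound $\inf\Specess\leq -1$ by using test functions modelled on the half-plane ground state $e^{-d(x,\partial U)}$ multiplied by a cut-off that travels to infinity along an edge. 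The matching lower bound $\inf\Specess\geq -1$ follows from an IMS-type partition of unity separating a large neighborhood of the vertex from two edge-neighborhoods and an interior region: one reflects each edge-neighborhood across its boundary to embed the form in that of a Robin Laplacian on a half-plane (whose spectrum is $[-1,+\infty)$), and imposes a Dirichlet condition on the interior piece (whose spectrum is then $\geq 0$), sending the localization error to zero with $R$.

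For $\alpha<\pi/2$, I would verify directly that $u(x)=\exp(-x_1/\sin\alpha)$ solves $-\Delta u = -u/\sin^2\alpha$ and satisfies $\partial_\nu u=u$ on each edge: on the upper edge the outward unit normal is $(-\sin\alpha,\cos\alpha)$, giving $\partial_\nu u = -\sin\alpha\cdot(-1/\sin\alpha)u=u$. The function $u$ lies in $L^2(U(\alpha))$ since the cross-section at abscissa $x_1>0$ has width $2x_1\tan\alpha$, and the corresponding eigenvalue $-1/\sin^2\alpha$ is strictly below $-1=\inf\Specess$. Since $u>0$ and the semigroup $e^{-tT^{1,\alpha}}$ is positivity-improving, $u$ must be the ground state, so $E_1(T^{1,\alpha})=-1/\sin^2\alpha$. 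Finiteness of $\mathcal N(T^{1,\alpha},-1)$ follows from a Birman--Schwinger argument: this count equals the number of eigenvalues of a suitably regularised compact operator built from the boundary trace and the resolvent of the free Robin Laplacian that exceed $1$. For $\alpha\geq\pi/2$ no analogous $L^2$ candidate exists, and a Hardy-type lower bound on the Rayleigh quotient yields $T^{1,\alpha}\geq-1$, hence $\Specdisc(T^{1,\alpha})=\emptyset$.

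The main obstacle is the sharp upper bound $\mathcal N(T^{1,\alpha},-1)=1$ valid for all $\alpha\in[\pi/6,\pi/2)$. My plan is to use the reflection symmetry $x_2\mapsto -x_2$ to decompose $T^{1,\alpha}=T^+\oplus T^-$ into its even and odd parts, and to observe that the explicit ground state $u$ belongs to $T^+$; the task then reduces to showing that $T^-$ has no eigenvalue below $-1$. Here $T^-$ is the Robin Laplacian on the upper half-sector with an additional Dirichlet condition imposed on the bisector $\{x_2=0\}$, and one must rule out bound states below its essential threshold. I would attempt this by rotating so the Robin edge becomes horizontal and then comparing the transverse ground-state energy with $-1$, either through a one-dimensional cross-sectional analysis or through an appropriate IMS localization combined with a positivity argument against a suitably chosen test function. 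The regime in which such an estimate closes dictates the explicit threshold $\alpha\geq\pi/6$, and this quantitative spectral inequality is where I expect the bulk of the technical difficulty to lie.
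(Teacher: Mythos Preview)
The paper does not actually prove this theorem: it is stated in Section~\ref{infiniteSect} as a collection of known facts imported from \cite{LP,KP}, with no argument given. So there is no ``paper's own proof'' to compare against; your proposal is a sketch of how one might establish results that the present paper simply quotes.

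That said, your outline for the essential spectrum, the explicit ground state, and the reduction to $\gamma=1$ is sound and matches the standard treatment in \cite{LP,KP}. The one genuine gap is in your plan for $\mathcal N(T^{1,\alpha},-1)=1$ when $\alpha\in[\pi/6,\pi/2)$. Decomposing $T^{1,\alpha}=T^+\oplus T^-$ by parity and showing $T^-$ has no spectrum below $-1$ only proves that every discrete eigenvalue is even; it does \emph{not} rule out a second even eigenvalue of $T^+$ below $-1$. You would still need a separate argument that $T^+$ itself has a unique bound state below threshold, and nothing in your sketch addresses that. The actual proof in \cite{KP} proceeds differently: after the parity reduction it passes to parabolic coordinates on the half-sector, separates variables, and reduces the question to a one-parameter family of one-dimensional operators whose thresholds can be compared explicitly; the value $\pi/6$ emerges from that computation rather than from a soft positivity argument.
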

In  \cite[Theorem 4.1]{KP}, an estimate on the Rayleigh quotients of $T^{\gamma,\alpha}$ as $\alpha$ is small is obtained which has as a direct consequence the following proposition.

\begin{proposition}
There exists $\kappa>0$ such that $\mathcal N(T^{\gamma,\alpha},-\gamma^2) \geq \kappa/\alpha$ as $\alpha$ is small. In particular
\[
\mathcal N(T^{\gamma,\alpha},-\gamma^2)\to +\infty,\quad \text{as}\quad \alpha\to 0.
\]
\end{proposition}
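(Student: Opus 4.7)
The plan is to reduce the statement to the case $\gamma=1$ by a scaling argument and then to invoke \cite[Theorem 4.1]{KP} together with the min-max principle.

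First I would exploit the dilation invariance of the sector $U(\alpha)$: since this domain is a cone, the map $V_\gamma\colon L^2(U(\alpha))\to L^2(U(\alpha))$ defined by $(V_\gamma\psi)(x)=\gamma\,\psi(\gamma x)$ is unitary. A direct computation on the sesquilinear form $t^{\gamma,\alpha}$ (the gradient term picks up a factor $\gamma^2$ and the factor $|U(\alpha)|$-measure picks up $\gamma^{-2}$, while on the boundary the arc-length element scales by $\gamma^{-1}$, compensated by the Robin coefficient $\gamma$) shows that
\[
V_\gamma^{-1}\,T^{\gamma,\alpha}\,V_\gamma=\gamma^2\,T^{1,\alpha},\qquad \text{and hence}\qquad \Lambda_n(T^{\gamma,\alpha})=\gamma^2\,\Lambda_n(T^{1,\alpha})\text{ for every }n\ge 1.
\]
Combining this with Theorem~\ref{propKP}, which gives $\Specess(T^{1,\alpha})=[-1,+\infty)$ and $\Specess(T^{\gamma,\alpha})=[-\gamma^2,+\infty)$, the min-max principle identifies the Rayleigh quotients below the essential spectrum with discrete eigenvalues, so that
\[
\mathcal N(T^{\gamma,\alpha},-\gamma^2)=\mathcal N(T^{1,\alpha},-1).
\]
Thus it suffices to prove the lower bound for $\gamma=1$.

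Next I would apply \cite[Theorem 4.1]{KP}: that result provides, for every $\alpha$ sufficiently small, the existence of a constant $\kappa>0$ and an integer $N(\alpha)\ge \kappa/\alpha$ such that $\Lambda_n(T^{1,\alpha})<-1$ for every $n\le N(\alpha)$. Because each such Rayleigh quotient lies strictly below $\inf\Specess(T^{1,\alpha})=-1$, the first alternative in the min-max principle applies and yields $E_n(T^{1,\alpha})=\Lambda_n(T^{1,\alpha})<-1$ for $n\le N(\alpha)$. Counting these discrete eigenvalues and combining with the scaling reduction gives $\mathcal N(T^{\gamma,\alpha},-\gamma^2)=\mathcal N(T^{1,\alpha},-1)\ge \kappa/\alpha$, as claimed. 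The ``in particular'' statement is then immediate on letting $\alpha\to 0$.

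The substantive point, and the expected main obstacle if one wished to reprove it from scratch, lies entirely in the Rayleigh quotient estimate \cite[Theorem 4.1]{KP}: one must exhibit an $\lfloor\kappa/\alpha\rfloor$-dimensional subspace of $H^1(U(\alpha))$ on which the form $t^{1,\alpha}$ is bounded above by $-1$ times the $L^2$-norm squared. A natural strategy is to build trial functions by modulating the explicit ground state $u_0(x)=\exp(-x_1/\sin\alpha)$, whose eigenvalue $-1/\sin^2\alpha$ lies far below $-1$, by radial cut-offs supported in disjoint dyadic shells; the difficulty is to control the cross-terms produced when the cut-off is differentiated, and to balance the resulting loss against the gap $1/\sin^2\alpha-1\sim 1/\alpha^2$ so as to fit $\sim 1/\alpha$ such windows before losing the negativity of the Rayleigh quotient. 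Here, however, I would simply quote the KP estimate and conclude by the min-max argument above.
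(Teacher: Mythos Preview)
Your proposal is correct and follows the same approach as the paper: the paper simply states that the proposition is a direct consequence of the Rayleigh-quotient estimate \cite[Theorem~4.1]{KP}, and you have spelled out this direct consequence via the scaling reduction to $\gamma=1$ (which the paper records just after Theorem~\ref{Agmon}) together with the min-max principle. Your additional paragraph sketching how one would construct the trial subspace is not needed for the proof but is a reasonable commentary on what the cited result entails.
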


Some following results are based on the decay property of the associated eigenfunctions \cite[Theorem 5.1]{KP}.

\begin{theorem}\label{Agmon}
Let $E$ be a discrete eigenvalue of $T^{\gamma,\alpha}$ and $\psi$ be an associated eigenfunction. Then, for any $\epsilon \in (0,1)$ there exists $C_\epsilon>0$ such that we have 
\[
\int_{U(\alpha)} \left ( \lvert \nabla \psi\rvert^2 +\lvert \psi \rvert^2\right) e^{2 (1-\epsilon)\sqrt{-\gamma^2-E}\lvert x \rvert} dx <C_\epsilon	.
\]
\end{theorem}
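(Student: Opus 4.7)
The plan is a classical Agmon argument adapted to the Robin boundary condition. Set $\delta := \sqrt{-\gamma^2 - E} > 0$ (positive because $E \in \Specdisc(T^{\gamma,\alpha})$ lies strictly below $\inf \Specess(T^{\gamma,\alpha}) = -\gamma^2$), fix $\epsilon \in (0,1)$, and use the Lipschitz weight $\Phi(x) := (1-\epsilon)\delta |x|$, whose gradient satisfies $|\nabla \Phi| = (1-\epsilon)\delta$ almost everywhere. To make the formal manipulations rigorous, I would first work with a smooth truncation $\Phi_M$ that agrees with $\Phi$ on $B_M$ and is uniformly bounded, deriving estimates uniform in $M$, and recover the statement by monotone convergence and Fatou as $M \to +\infty$.

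The first key computation is an Agmon integral identity: testing $-\Delta \psi = E \psi$ against $e^{2\Phi_M}\bar\psi$, integrating by parts, and using the Robin condition $\partial_\nu \psi = \gamma \psi$ to absorb the boundary term yields
\[
t^{\gamma,\alpha}\bigl(e^{\Phi_M}\psi, e^{\Phi_M}\psi\bigr) = E \int_{U(\alpha)} |e^{\Phi_M}\psi|^2\,dx + \int_{U(\alpha)} |\nabla \Phi_M|^2 |e^{\Phi_M}\psi|^2\,dx.
\]
The second step is an IMS localization: pick $\chi_R, \tilde\chi_R$ smooth real-valued with $\chi_R^2 + \tilde\chi_R^2 = 1$, $\chi_R = 1$ on $B_R$ and $\chi_R = 0$ outside $B_{2R}$, and $|\nabla \chi_R|^2 + |\nabla \tilde\chi_R|^2 \le C/R^2$. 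Using $\chi_R \nabla \chi_R + \tilde\chi_R \nabla\tilde\chi_R = 0$, one obtains for $f := e^{\Phi_M}\psi$
\[
t^{\gamma,\alpha}(f,f) = t^{\gamma,\alpha}(\chi_R f, \chi_R f) + t^{\gamma,\alpha}(\tilde\chi_R f, \tilde\chi_R f) - \int_{U(\alpha)}\bigl(|\nabla \chi_R|^2 + |\nabla \tilde\chi_R|^2\bigr)|f|^2\,dx.
\]
The crucial technical input is a Persson-type lower bound for the exterior piece,
\[
t^{\gamma,\alpha}(\tilde\chi_R f, \tilde\chi_R f) \geq -\bigl(\gamma^2 + \eta(R)\bigr)\|\tilde\chi_R f\|^2,\qquad \eta(R) \to 0 \text{ as } R \to +\infty,
\]
which encodes $\inf \Specess(T^{\gamma,\alpha}) = -\gamma^2$: outside a large ball, $U(\alpha)$ decouples into two disjoint neighborhoods of its edges, each comparable to a Robin half-plane whose form is bounded below by $-\gamma^2$.

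Combining the three displays, the semibound $t^{\gamma,\alpha}(\chi_R f, \chi_R f) \geq -\tfrac{\gamma^2}{\sin^2 \alpha}\|\chi_R f\|^2$ from Theorem~\ref{propKP}, and the exact relation $t^{\gamma,\alpha}(f,f) = -(\gamma^2 + \mu)\|f\|^2$ with $\mu := \delta^2\epsilon(2-\epsilon) > 0$ obtained from the Agmon identity, a rearrangement gives
\[
\bigl[\mu - \eta(R) - C/R^2\bigr]\,\|f\|^2 \leq \frac{\gamma^2 \cos^2 \alpha}{\sin^2 \alpha}\,\|\chi_R f\|^2.
\]
The right-hand side is bounded uniformly in $M$ by $\tfrac{\gamma^2 \cos^2 \alpha}{\sin^2 \alpha}\, e^{4(1-\epsilon)\delta R}\|\psi\|^2 < \infty$ since $\chi_R f$ is supported in $B_{2R}$. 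Fixing $R$ large enough that the bracket on the left is $\geq \mu/2$ and then taking $M \to +\infty$ yields $\int e^{2(1-\epsilon)\delta|x|}|\psi|^2\,dx \leq C_\epsilon$. Inserting this back into the Agmon identity and absorbing the boundary term via the trace inequality $\int_{\partial U(\alpha)}|g|^2\,ds \leq \eta'\|\nabla g\|^2 + C_{\eta'}\|g\|^2$ produces a bound on $\int|\nabla(e^\Phi \psi)|^2\,dx$, from which the gradient estimate follows by expanding $|\nabla(e^\Phi \psi)|^2 = e^{2\Phi}\bigl(|\nabla \psi|^2 + 2\operatorname{Re}(\bar\psi \nabla\Phi \cdot \nabla\psi) + |\nabla\Phi|^2|\psi|^2\bigr)$ and using Cauchy–Schwarz.

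The main obstacle is the Persson-type exterior estimate. Establishing it cleanly requires comparing the form of $T^{\gamma,\alpha}$ restricted to functions supported far from the vertex with that of a Robin half-plane, and this is the only place where the geometry of the sector enters in a non-routine way; everything else is algebraic manipulation of standard identities.
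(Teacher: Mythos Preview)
The paper does not prove this statement: it is quoted verbatim as \cite[Theorem 5.1]{KP} and used as a black box throughout. So there is no ``paper's own proof'' to compare against here.

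That said, your proposal is a correct and complete implementation of the standard Agmon method, and it is essentially the argument carried out in \cite{KP}. The three ingredients you identify --- the weighted identity $t^{\gamma,\alpha}(e^{\Phi}\psi,e^{\Phi}\psi)=E\|e^{\Phi}\psi\|^2+\int|\nabla\Phi|^2|e^{\Phi}\psi|^2$, the IMS split into a compact core and an exterior piece, and the Persson-type bound $t^{\gamma,\alpha}(g,g)\geq -(\gamma^2+\eta(R))\|g\|^2$ for $g$ supported outside $B_R$ --- are exactly what is used there. Your algebra leading to $\bigl[\mu-\eta(R)-C/R^2\bigr]\|e^{\Phi_M}\psi\|^2\leq \tfrac{\gamma^2\cos^2\alpha}{\sin^2\alpha}\|\chi_R e^{\Phi_M}\psi\|^2$ is correct, and your treatment of the gradient bound via the trace inequality and expansion of $|\nabla(e^{\Phi}\psi)|^2$ is the standard closing step. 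The only point that deserves a line of justification in a written-out version is the uniform (in $R$) trace inequality on the unbounded sector, but this follows from the one-dimensional estimate applied on each edge; everything else is as you describe.
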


Notice that, since the domain $U(\alpha)$ is invariant by dilations, a simple change of variables tells us that $T^{\gamma,\alpha}$ is unitarily equivalent to $\gamma^2T^{1,\alpha}$. In particular $E_n(T^{\gamma,\alpha})=\gamma^2 E_n(T^{1,\alpha})$ and $\mathcal N(T^{\gamma,\alpha},-\gamma^2)=\mathcal N(T^{1,\alpha}, -1)$. Let us denote by $(\psi^{1}_n)_n$ the normalized eigenfunctions of $T^{1,\alpha}$.
Then, $(\psi^{\gamma}_n)_n$ defined as 
\begin{align}
\label{eigenFsector}
 \psi^{\gamma}_n(x)\coloneqq \gamma \psi^{1}_n(\gamma x),
\end{align}
 are the eigenfunctions of $T^{\gamma,\alpha}$ satisfying 
\[ 
\lVert \psi^{\gamma}_n\rVert_{L^2\left(U(\alpha)\right)}=1, \quad
\lVert \nabla \psi^{\gamma}_n\rVert_{L^2\left(U(\alpha)\right)}=\gamma\lVert \nabla\psi^{1}_n\rVert_{L^2\left(U(\alpha)\right)}, \quad
\lVert \psi^{\gamma}_n\rVert_{L^2\left(\partial U(\alpha)\right)}=\sqrt{\gamma} \lVert \psi^{1}_n \rVert_{L^2\left(\partial U(\alpha)\right)}.
\]

\subsection{Definition of curvilinear polygons}\label{sectdefcurvpol}

Let us introduce a rigorous definition of the domains we consider.

\begin{definition}
\label{defcurvpol}
Let $\Omega\subset\mathbb R^2$ be a bounded open set. We say that $\Omega$ is a \emph{curvilinear polygon} if $\partial \Omega$ is Lipschitz and if there exists $M\geq 1$ non-intersecting connected arcs $\Gamma_k$, $k=1,...,M$, such that 
\[
\partial\Omega= \bigcup_{k=1}^M \overline{\Gamma_k},
\]
and if we denote by $l_k$ the length of $\Gamma_k$ and by $\gamma_k$ a parametrization of $\overline{\Gamma_k}$ by the arc length then $\gamma_k\in C^4([0,l_k])$. Moreover, 
if two components $\Gamma_k$, $\Gamma_j$ intersect at some point $v\coloneqq\Gamma_k(l_k)=\Gamma_j(0)$, then two cases are allowed: either $\overline{\Gamma_k}\cup\overline{\Gamma_j}$ is $C^4$ near $v$ and then $v$ is called a regular point of $\partial \Omega$, or the corner opening angle at $v$, called $\alpha_v$, measured inside $\Omega$ and formed by the one-sided tangents at $v$ belongs to $(0,\pi)\cup(\pi,2\pi)$. In the latter case, $v$ is called a vertex of $\Omega$. 
\end{definition}
Notice that cusps (zero angles) are not allowed by our definition as the boundary is Lipschitz. 

We introduce the set of convex vertices of $\Omega$ by
\[
\mathcal V\coloneqq\left \{ v\in\partial \Omega, \text{ $v$ is a vertex of $\Omega$ and } \alpha_v\in(0,\frac{\pi}{2}) \right\}.
\]
It is then easy to see that, for each $v\in\mathcal V$ there exists $r_v>0$ and $F_v$ a $C^2$-diffeomorphism satisfying the following conditions:
\begin{itemize}
\item[(a)] $F_v : \Omega \cap B(v,r_v) \to U(\alpha_v)\cap B(0,r_v)$,
\item[(b)] $F_v(\overline{\Omega\cap B(v,r_v)})= \overline{U(\alpha_v)\cap B(0,r_v)}$,
\item [(c)] $ F_v(v)=0$ and $\nabla F_v(v)= I_2$,
\end{itemize}
where $I_2$ stands for the identity matrix in two dimensions, $B(v,r_v)$ is the ball of center $v$ and radius $r_v$ in $\mathbb R^2$, $\nabla F_v$ is the Jacobian matrix of $F_v$.
We say that $U(\alpha_v)$ is the tangent sector of $\Omega$ at $v$.

\subsection{Model operator}\label{modelop}

In this section we introduce the model operator and some important notation which will be used in the whole paper.

Let $\Omega$ be a curvilinear polygon.
 For $v\in\mathcal V$, we denote by $(\psi^{\gamma,v}_n)_n$ the normalized eigenfunctions of $T^{\gamma,\alpha_v}$. In the following, we use the simpler notation
\[
T_v\coloneqq T^{1,\alpha_v},\text{ and } U_v\coloneqq U(\alpha_v),
\]
and we introduce $\mathcal N_v\coloneqq \mathcal N(T_v,-1)$. 

We define the model operator $T^\oplus$ as the direct sum of Robin Laplacians on tangent sectors associated with the convex vertices of $\partial \Omega$, 
\[
T^{\oplus}\coloneqq \bigoplus_{v\in\mathcal V} T_v,
\]
acting on $\bigoplus_{v\in\mathcal V}L^2(U_v)$.
Then $\Spec(T^{\oplus})=\bigcup_{v\in\mathcal V} \Spec(T_v)$. We denote  by $\mathcal N^\oplus\coloneqq\sum_{v\in\mathcal V} \mathcal N_v$, and $\Lambda\coloneqq\{ \lambda_l, 1\leq l \leq K^\oplus\}$ the eigenvalues of $T^{\oplus}$ ordered in the increasing way and counted \emph{without} multiplicity, namely : $\lambda_1<\lambda_2<...<\lambda_K^\oplus$. For $1\leq l \leq K^\oplus$ we introduce 
\[
\mathcal S_l\coloneqq \{(n,v) :  v\in\mathcal V, 1\leq n\leq\mathcal N_v: E_n(T_v)=\lambda_l\},
\]
and $m_l\coloneqq \# \mathcal S_l$. Defined like this, $m_l$ is then the multiplicity of $\lambda_l$ as an eigenvalue of $T^\oplus$ and $\sum_{l=1}^{K^\oplus} m_l =\mathcal  N^\oplus$. 
Finally we denote by $E^{\max}\coloneqq E_{\mathcal N^\oplus}(T^\oplus)$.

\section{Robin Laplacian on polygons}\label{RLpol}

We begin our study with the particular case of $\Omega$ being 
a bounded connected polygon with straight edges, namely each $\Gamma_k$ in Definition~\ref{defcurvpol} is a segment. As there is no ambiguity, we denote $Q^\gamma\coloneqq  Q^\gamma_\Omega$.
For each $v\in \mathcal V$, there exists $\tilde U_v$ an infinite sector of half aperture $\alpha_v$ and of vertex $v$ such that, for $r>0$ small enough,
\[
\Omega \cap B(v,r)=\tilde U_v\cap B(v,r),
\]
and there exists $F_v$ a rotation composed by a translation satisfying
\[
 U_v=\{ F_v(x), x\in \tilde U_v\}.
\]

\subsection{Description of quasi-modes}\label{descriptqm}

Let $v\in\mathcal V$. For $n\in\{1,...,\mathcal N_v\}$ we set
$
\phi^{\gamma,v}_n\coloneqq \psi^{\gamma,v}_n \circ F_v.
$
Then, $\phi^{\gamma,v}_n\in H^2(\tilde U_v)$ and satisfies the Robin boundary condition on $\partial\tilde U_v$ with the Robin parameter $\gamma$. 
Let us introduce 
\[
\rho_v\coloneqq \frac{\dist(v,\mathcal V \backslash \{v\})}{2}, \text{ and } \rho\coloneqq \displaystyle \frac{\min_{v\in \mathcal V} \rho_v}{2}.
\]
Let $\varphi \in C^{\infty} (\mathbb R_+)$ be a smooth cut-off function satisfying 
$0\leq \varphi \leq 1$, 
$\varphi(t)=1$ if $0\leq t\leq 1$, 
and $\varphi(t) =0$ if $t\geq 2$. 
We introduce the smooth radial cut-off function $\chi_v$ defined as follows:
\[
\chi_v(x)=\varphi\left(\frac{\lvert x - v\rvert}{ \rho}\right), \quad x\in \Omega.
\]
 Notice that, for $v\neq v'$, $\supp \chi_v \cap \supp \chi_{v'} =\emptyset$.
Finally we set, for $1\leq n \leq \mathcal N_v$,
\[
\tilde \phi^{\gamma,v}_n\coloneqq \phi^{\gamma,v}_n \chi_v \text{ on } \Omega.
\]

\begin{proposition}\label{qm}
For any $\epsilon\in(0,1)$, 
there exists $C >0$ such that 
for $v\in\mathcal V$, $n\in\{1,...,\mathcal N_v\}$ 
and $\gamma>0$ we have $\tilde \phi^{\gamma,v}_n\in D(Q^\gamma)$, and
\begin{align}
\label{qm1}
1-C e^{-2\gamma(1-\epsilon)\sqrt{-1-E^{\max}} \rho} &\leq \lVert \tilde \phi^{\gamma,v}_n \rVert^2_{L^2(\Omega)} \leq 1, \\
\label{qm2}
\frac{\lVert Q^\gamma \tilde\phi^{\gamma,v}_n-\gamma^2E_n(T_{v})\tilde \phi^{\gamma,v}_n\rVert^2_{L^2(\Omega)}}{\lVert \tilde\phi^{\gamma,v}_n\rVert^2_{L^2(\Omega)}}&\leq C e^{-2\gamma(1-\epsilon) \sqrt{-1-E^{\max}}\rho}.
\end{align}
\end{proposition}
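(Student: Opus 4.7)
The plan is to verify the three assertions separately: membership in $D(Q^\gamma)$, the $L^2$-norm bounds \eqref{qm1}, and the residual estimate \eqref{qm2}. First, I would note that by the choice of $\rho$ the ball $B(v,2\rho)$ contains no other vertex of $\Omega$, and within this ball $\Omega$ coincides with the tangent sector $\tilde U_v$. Since $\phi^{\gamma,v}_n\in H^2(\tilde U_v)$ and $\chi_v\in C^\infty_c(\mathbb R^2)$, the product $\tilde\phi^{\gamma,v}_n$ lies in $H^2(\Omega)$ and vanishes outside $B(v,2\rho)$. The only edges of $\partial\Omega$ meeting $\supp\tilde\phi^{\gamma,v}_n$ are the two straight edges emanating from $v$; because $\chi_v$ is radial around $v$ and these edges are rays from $v$, $\nabla\chi_v$ is tangent to them, so $\partial_\nu\chi_v=0$ on those edges. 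The Leibniz rule then gives $\partial_\nu\tilde\phi^{\gamma,v}_n=\chi_v\,\partial_\nu\phi^{\gamma,v}_n=\gamma\tilde\phi^{\gamma,v}_n$ on $\partial\Omega$, establishing $\tilde\phi^{\gamma,v}_n\in D(Q^\gamma)$.

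For \eqref{qm1}, the upper bound is immediate from $0\le\chi_v\le 1$ and the normalization of $\psi^{\gamma,v}_n$. For the lower bound I would write
\[
\|\tilde\phi^{\gamma,v}_n\|^2_{L^2(\Omega)}\ge 1-\int_{U_v\setminus B(0,\rho)}|\psi^{\gamma,v}_n|^2\,dx,
\]
and bound the tail by transferring the Agmon-type inequality of Theorem~\ref{Agmon} from $\psi^{1,v}_n$ to $\psi^{\gamma,v}_n$ via the dilation \eqref{eigenFsector}. This produces
\[
\int_{U_v}|\psi^{\gamma,v}_n(x)|^2\,e^{2(1-\epsilon)\gamma\sqrt{-1-E_n(T_v)}|x|}\,dx\le C_\epsilon,
\]
so splitting off the factor at $|x|=\rho$ and using $-1-E_n(T_v)\ge -1-E^{\max}$ yields the required exponential. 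Since there are only finitely many pairs $(v,n)$, the constants $C_\epsilon$ can be taken uniform in $(v,n)$.

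For the residual estimate \eqref{qm2}, since $-\Delta\phi^{\gamma,v}_n=\gamma^2 E_n(T_v)\phi^{\gamma,v}_n$ on $\tilde U_v$, the Leibniz rule gives
\[
(Q^\gamma-\gamma^2 E_n(T_v))\tilde\phi^{\gamma,v}_n=-2\nabla\phi^{\gamma,v}_n\cdot\nabla\chi_v-\phi^{\gamma,v}_n\Delta\chi_v,
\]
which is supported in the annulus $\rho\le|x-v|\le 2\rho$. With $|\nabla\chi_v|\le C/\rho$ and $|\Delta\chi_v|\le C/\rho^2$, the $L^2$-norm of this expression is controlled by the integral of $|\nabla\phi^{\gamma,v}_n|^2+|\phi^{\gamma,v}_n|^2$ on the annulus. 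Using the full Agmon estimate (including the $|\nabla\psi|^2$ term) together with the dilation, one obtains a bound of the form $C_\epsilon\gamma^2\,e^{-2(1-\epsilon)\gamma\sqrt{-1-E^{\max}}\rho}$, the $\gamma^2$ factor coming from the scaling of the gradient. Combined with the already proved lower bound on $\|\tilde\phi^{\gamma,v}_n\|^2$, this yields \eqref{qm2}.

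The main technical point will be to keep the constants uniform in $\gamma>0$. My strategy is to apply Agmon's inequality once and for all to the finitely many $\gamma=1$ eigenfunctions $\psi^{1,v}_n$, and then use the scaling identity \eqref{eigenFsector} to obtain the sharp $\gamma$-dependent exponent automatically. The polynomial $\gamma^2$ appearing in the residual bound is absorbed by running the Agmon argument with a slightly smaller $\epsilon'<\epsilon$ and exploiting that $\gamma^2\,e^{-2\gamma(\epsilon-\epsilon')\sqrt{-1-E^{\max}}\rho}$ is bounded for $\gamma>0$.
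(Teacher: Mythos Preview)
Your proposal is correct and follows essentially the same route as the paper: the domain check via $\partial_\nu\chi_v=0$ on the rays, the norm bounds via the tail estimate on $U_v\setminus B(0,\rho)$, and the Leibniz formula for the residual supported in the annulus, all controlled by the Agmon decay of Theorem~\ref{Agmon}. The one place where you are actually more careful than the paper is in tracking the $\gamma^2$ factor coming from $\|\nabla\psi^{\gamma,v}_n\|$ under the dilation \eqref{eigenFsector} and absorbing it via an $\epsilon'<\epsilon$; the paper invokes Theorem~\ref{Agmon} directly at parameter $\gamma$ and writes the bound with a $\gamma$-independent $C$, which tacitly relies on exactly the argument you spell out.
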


\begin{Remarques}
In the sequel we denote by $C$ all the constants depending eventually on $\epsilon$ and not on $v$. 
If a constant $C(v)$ depending on $v\in\mathcal V$ appears, as $\# \mathcal V $ is finite, it is sufficient to take
 $C\coloneqq\max_{v\in\mathcal V} C(v)$.
\end{Remarques}

\begin{proof}[Proof of Proposition \ref{qm}]
We start proving \eqref{qm1}.
We immediately see that $\lVert \tilde \phi^{\gamma,v}_n\rVert ^2_{L^2(\Omega)}\leq \lVert  \psi^{\gamma,v}_n\rVert^2_{L^2(U_{v})}=1.$ On the other hand, we have
\begin{align*}
\lVert \tilde \phi^{\gamma,v}_n\rVert^2_{L^2(\Omega)} \geq \int_{\Omega\cap B(v,\rho)} \lvert  \phi_n^{\gamma,v}\rvert^2dx 
=\int_{U_v} \lvert  \psi^{\gamma,v}_n \rvert^2 dx - \int_{U_v\backslash B(v,\rho)} \lvert  \psi ^{\gamma,v}_n\rvert^2 dx.
\end{align*}
We now can apply Theorem~\ref{Agmon} to $\psi^{\gamma,v}_n$ to get:
\[
\int_{U_v\backslash B(v,\rho)} \lvert  \psi ^{\gamma,v}_n\rvert^2 dx \leq C
e^{-2(1-\epsilon)\gamma\sqrt{-1-E^{\max}}\rho},
\]
which gives us the lower bound for $\lVert \tilde \phi^{\gamma,v}_n\rVert^2_{L^2(\Omega)}$ and concludes the proof of \eqref{qm1}.

To prove that $\tilde \phi^{\gamma,v}_n \in D(Q^\gamma)$ 
we have to show that $-\Delta \tilde \phi^{\gamma,v}_n\in L^2(\Omega)$, which is easily checked as $\chi_v$ is smooth, 
and that $\partial_\nu \tilde \phi^{\gamma,v}_n =\gamma \tilde\phi^{\gamma,v}_n$ on $\partial \Omega$. 
As $\chi_v$ is radial, $\partial_\nu \chi_v=0 $ on $\partial \left( \Omega \cap B(v,2\rho)\right) \backslash \partial B(v,\rho)$ and then $\tilde \phi^{\gamma,v}_n$ satisfies the Robin boundary condition. 
Thus we can write 
\[
Q^\gamma \tilde \phi^{\gamma,v}_n=-\Delta (\phi^{\gamma,v}_n)\chi_v-\Delta(\chi_v) \phi^{\gamma,v}_n -2\nabla \phi^{\gamma,v}_n \nabla\chi_v,
\]
and for all $x\in \supp \chi_v$,
\[
-\Delta \phi^{\gamma,v}_n=\gamma^2E_n(T_{v}) \phi^{\gamma,v}_n.
\]
Using the fact that $\supp \Delta (\chi_v) \subset \supp \nabla \chi_v \subset B(v,2\rho)\backslash \overline{B(v,\rho)}$ and Theorem~\ref{Agmon}, we obtain 
\[
\lVert \Delta(\chi_v) \phi^{\gamma,v}_n \rVert^2_{L^2(\Omega)} \leq \lVert \Delta(\chi_v)\rVert^2_\infty C e^{-2(1-\epsilon)\gamma\sqrt{-1-E^{\max}}\rho},
\]
and,
\[
\lVert \nabla \phi^{\gamma,v}_n \nabla \chi_v \rVert^2_{L^2(\Omega)} \leq \lVert \nabla \chi_v\rVert^2_\infty  C e^{-2(1-\epsilon)\gamma\sqrt{-1-E^{\max}}\rho}.
\]
Gathering the two previous inequalities gives us
\begin{align}
\label{qm2_1}
\lVert Q_\gamma\tilde\phi^{\gamma,\alpha_s}_n-\gamma^2 E_n(T_{\alpha_s})\tilde\phi^{\gamma,\alpha_s}_n\rVert^2_{L^2(\Omega)} \leq C e^{-2(1-\epsilon)\gamma\sqrt{-1-E^{\max}}\rho}.
\end{align}
Putting \eqref{qm1} and \eqref{qm2_1} together finishes the proof.
\end{proof}

\begin{Corollaires}
For any $\epsilon \in(0,1)$ there exists a constant  $C >0$ such that  
\[
\dist(\gamma^2 E_n(T_v), \Spec(Q^\gamma)) \leq C e^{-(1-\epsilon)\gamma\sqrt{-1-E^{\max}}\rho}.
\]
\end{Corollaires}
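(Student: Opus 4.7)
The plan is to deduce this corollary directly from Proposition~\ref{qm} via the elementary spectral approximation estimate: for any self-adjoint operator $A$, any $\lambda \in \mathbb R$ and any nonzero $\psi \in D(A)$,
\[
\dist\bigl(\lambda, \Spec(A)\bigr) \leq \frac{\lVert A\psi - \lambda \psi\rVert}{\lVert \psi\rVert}.
\]
This is the standard consequence of the functional calculus (equivalently, the spectral theorem applied to $(A-\lambda)^{-1}$ when $\lambda \notin \Spec(A)$), and it is presumably the ``spectral approximation result'' the author refers to and places in Appendix~\ref{speccorol}.

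To apply this, I would take $A = Q^\gamma$, $\lambda = \gamma^2 E_n(T_v)$, and $\psi = \tilde\phi^{\gamma,v}_n$. The key point is that Proposition~\ref{qm} provides exactly what is needed: it asserts that $\tilde\phi^{\gamma,v}_n \in D(Q^\gamma)$, so the approximation lemma applies; the lower bound in \eqref{qm1} shows that the denominator $\lVert \tilde\phi^{\gamma,v}_n\rVert^2_{L^2(\Omega)}$ is bounded below by a quantity that tends to $1$ as $\gamma \to +\infty$, hence bounded below by a positive constant uniformly in $\gamma$ large; and \eqref{qm2} directly controls the squared quotient $\lVert Q^\gamma \tilde\phi^{\gamma,v}_n - \gamma^2 E_n(T_v)\tilde\phi^{\gamma,v}_n\rVert^2 / \lVert \tilde\phi^{\gamma,v}_n\rVert^2$ by $C e^{-2(1-\epsilon)\gamma\sqrt{-1-E^{\max}}\rho}$.

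Taking the square root of this bound yields the claimed estimate
\[
\dist\bigl(\gamma^2 E_n(T_v), \Spec(Q^\gamma)\bigr) \leq C e^{-(1-\epsilon)\gamma\sqrt{-1-E^{\max}}\rho},
\]
where the exponent is halved because \eqref{qm2} is stated as an $L^2$-squared bound. The parameter $\epsilon \in (0,1)$ in the corollary comes directly from the corresponding $\epsilon$ in Proposition~\ref{qm}, and the constant $C$ is absorbed using the same convention (taking the max over $v \in \mathcal V$, which is a finite set). There is no real obstacle here: all the work has been done in the quasi-mode estimates of Proposition~\ref{qm}, and the corollary is a one-line consequence. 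The only small care needed is to check that for $\gamma$ large the denominator in \eqref{qm1} is bounded away from zero (which is immediate since $1 - Ce^{-c\gamma} \to 1$), while for small $\gamma$ the estimate is trivial upon enlarging $C$.
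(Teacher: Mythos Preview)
Your proof is correct and matches the paper's approach exactly: the paper's proof is the single line ``This is a consequence of the spectral theorem due to \eqref{qm2},'' which is precisely the elementary estimate $\dist(\lambda,\Spec(A))\leq \lVert A\psi-\lambda\psi\rVert/\lVert\psi\rVert$ that you apply. One small clarification: the result in Appendix~\ref{speccorol} is a multi-vector refinement used later in the proof of Theorem~\ref{asexp}, not here; for this corollary only the one-vector spectral theorem estimate is needed, as you in fact use.
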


\begin{proof}
This is a consequence of the spectral theorem due to \eqref{qm2}.
\end{proof}

\subsection{Properties of quasi-modes}\label{propqm}

In order to prove Theorem~\ref{asexp} we will need some properties satisfied by the quasi-modes gathered in the following lemma.
\begin{lemma}\label{pq}
Let $\epsilon\in(0,1)$. There exists a constant $C>0$ such that, for all $v\in\mathcal V$,
for all $n\in\{1,...,\mathcal N_v\}$ and 
for all $i\neq j$, $(i,j)\in\{1,...,\mathcal N_v\}^2$ we have for $\gamma$ large enough,
\begin{align}
\label{pq2}
\left \lvert q^{\gamma}(\tilde \phi^{\gamma,v}_n,\tilde \phi^{\gamma,v}_n) -\gamma^2 E_n(T_v)\right \rvert &\leq \gamma C e^{-2(1-\epsilon)\gamma \sqrt{-1-E^{\max}}\rho}, \\
\label{pq1}
\left \lvert \langle \tilde \phi^{\gamma,v}_i,\tilde \phi^{\gamma,v}_j\rangle_{L^2(\Omega)} \right \rvert &\leq C e^{-2(1-\epsilon)\gamma \sqrt{-1-E^{\max}}\rho},\\
\label{pq3}
\left \lvert q^{\gamma}(\tilde \phi^{\gamma,v}_i,\tilde \phi^{\gamma,v}_j) \right \rvert &\leq \gamma C e^{-2(1-\epsilon)\gamma \sqrt{-1-E^{\max}}\rho}.
\end{align}
\end{lemma}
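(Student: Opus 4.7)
The plan is to bypass a naive Cauchy--Schwarz bound (which would give only a single exponent $(1-\epsilon)$) and instead obtain the optimal exponent $2(1-\epsilon)$ by expanding $-\Delta(\chi_v \phi^{\gamma,v}_n)$ with the Leibniz rule and estimating each localized error term by the Agmon weight (Theorem~\ref{Agmon}) applied twice. Since $\tilde\phi^{\gamma,v}_n \in D(Q^\gamma)$ by Proposition~\ref{qm}, I would write $q^\gamma(\tilde\phi^{\gamma,v}_i,\tilde\phi^{\gamma,v}_j) = \int_\Omega \overline{\tilde\phi^{\gamma,v}_j}\,(-\Delta\tilde\phi^{\gamma,v}_i)\,dx$ and use
\[
-\Delta(\chi_v \phi^{\gamma,v}_i) = \gamma^2 E_i(T_v)\,\chi_v\phi^{\gamma,v}_i - (\Delta\chi_v)\phi^{\gamma,v}_i - 2\nabla\chi_v\cdot\nabla\phi^{\gamma,v}_i.
\]
This decomposes the form into a main term $\gamma^2 E_i(T_v)\langle \tilde\phi^{\gamma,v}_i,\tilde\phi^{\gamma,v}_j\rangle$ and two error terms supported on the annulus $\mathcal A_v := B(v,2\rho)\setminus \overline{B(v,\rho)}$, where both $\nabla\chi_v$ and $\Delta\chi_v$ live.

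For \eqref{pq2} I would take $i=j=n$. The main term equals $\gamma^2 E_n(T_v)\|\tilde\phi^{\gamma,v}_n\|^2$, which differs from $\gamma^2 E_n(T_v)$ by at most $\gamma^2 |E^{\max}|\, C e^{-2(1-\epsilon)\gamma\sqrt{-1-E^{\max}}\rho}$ thanks to \eqref{qm1}. The term $\int_{\mathcal A_v}\chi_v(\Delta\chi_v)|\phi^{\gamma,v}_n|^2\,dx$ is controlled by $\|\Delta\chi_v\|_\infty \int_{\mathcal A_v}|\phi^{\gamma,v}_n|^2\,dx$, and on $\mathcal A_v$ we have $|x-v|\ge \rho$, so by Theorem~\ref{Agmon} applied to $\psi^{\gamma,v}_n$ (and the scaling \eqref{eigenFsector}, which preserves the weighted integral) this is bounded by $Ce^{-2(1-\epsilon)\gamma\sqrt{-1-E^{\max}}\rho}$. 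The gradient error $-2\int_{\mathcal A_v}\chi_v\overline{\phi^{\gamma,v}_n}\nabla\chi_v\cdot\nabla\phi^{\gamma,v}_n\,dx$ is handled by Cauchy--Schwarz, producing $(\int_{\mathcal A_v}|\phi^{\gamma,v}_n|^2)^{1/2}(\int_{\mathcal A_v}|\nabla\phi^{\gamma,v}_n|^2)^{1/2}$; the gradient factor picks up a $\gamma$ from scaling (see \eqref{eigenFsector}), yielding an overall $\gamma C e^{-2(1-\epsilon)\gamma\sqrt{-1-E^{\max}}\rho}$, which is the dominant contribution and matches the stated bound (the leftover $\gamma^2 e^{\cdots}$ in the mass correction is absorbed into $\gamma C e^{\cdots}$ by slightly decreasing $\epsilon$).

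For \eqref{pq1} I would use that $\phi^{\gamma,v}_i$ and $\phi^{\gamma,v}_j$ are orthonormal on the full sector $\tilde U_v$ (pulled back from the orthonormal basis $(\psi^{\gamma,v}_n)_n$ of $T^{\gamma,\alpha_v}$), while $\tilde\phi^{\gamma,v}_i$ and $\tilde\phi^{\gamma,v}_j$ are both supported in $\Omega\cap B(v,2\rho)=\tilde U_v\cap B(v,2\rho)$. Writing $\chi_v^2=1-(1-\chi_v^2)$ gives
\[
\langle\tilde\phi^{\gamma,v}_i,\tilde\phi^{\gamma,v}_j\rangle_{L^2(\Omega)} = \langle\phi^{\gamma,v}_i,\phi^{\gamma,v}_j\rangle_{L^2(\tilde U_v)} - \int_{\tilde U_v}(1-\chi_v^2)\overline{\phi^{\gamma,v}_j}\phi^{\gamma,v}_i\,dx,
\]
the first summand vanishes by orthogonality, and $1-\chi_v^2$ is supported in $\tilde U_v\setminus B(v,\rho)$, so Cauchy--Schwarz together with the Agmon bound on each factor yields the claim directly.

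Finally \eqref{pq3} follows by combining the two previous arguments: the main term $\gamma^2 E_i(T_v)\langle\tilde\phi^{\gamma,v}_i,\tilde\phi^{\gamma,v}_j\rangle$ is bounded by $\gamma^2 |E^{\max}|$ times \eqref{pq1}, and the two annulus remainders are estimated exactly as in \eqref{pq2} (with $\overline{\phi^{\gamma,v}_j}$ replacing $\overline{\phi^{\gamma,v}_i}$), producing the same $\gamma C e^{-2(1-\epsilon)\gamma\sqrt{-1-E^{\max}}\rho}$ bound after absorbing polynomial prefactors. The main obstacle throughout is getting the exponent right: one must avoid applying Cauchy--Schwarz to the whole of $(Q^\gamma-\gamma^2 E_i)\tilde\phi^{\gamma,v}_i$ paired with $\tilde\phi^{\gamma,v}_j$, which would halve the rate; only after exploiting that \emph{both} factors in each annular error are exponentially small does one recover the factor $2$ in the exponent, and keeping track of the $\gamma$ produced by the scaling of $\nabla\psi^{\gamma,v}_n$ is what pins down the linear-in-$\gamma$ prefactor in \eqref{pq2} and \eqref{pq3}.
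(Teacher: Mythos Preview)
Your proof is correct, but it takes a genuinely different route from the paper's. For \eqref{pq2} and \eqref{pq3} the paper expands $q^\gamma(\tilde\phi_i,\tilde\phi_j)$ directly at the \emph{form} level, splitting into $\int|\chi_v|^2\nabla\phi_i\overline{\nabla\phi_j}-\gamma\int_{\partial\Omega}|\chi_v|^2\phi_i\overline{\phi_j}$ plus cross terms supported in the annulus; the main piece is then compared with $t^{\gamma,\alpha_v}(\psi_i,\psi_j)$, and the missing boundary contribution over $\partial U_v\setminus B(0,\rho)$ is controlled by a trace inequality \eqref{lip} followed by Agmon, which is where the paper's linear prefactor $\gamma$ actually originates. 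You instead exploit that $\tilde\phi^{\gamma,v}_n\in D(Q^\gamma)$ (Proposition~\ref{qm}) to write $q^\gamma(\tilde\phi_i,\tilde\phi_j)=\langle -\Delta\tilde\phi_i,\tilde\phi_j\rangle$ and apply the Leibniz rule for $-\Delta$; this absorbs the boundary term into the operator action and reduces everything to annular bulk integrals, so the trace step disappears entirely. Your $\gamma$ then comes from the mass defect $\gamma^2 E_n(T_v)\bigl(\|\tilde\phi_n\|^2-1\bigr)$ via the $\epsilon$-absorption trick you describe. For \eqref{pq1} the two arguments are essentially identical.

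One small imprecision: the claim that ``the gradient factor picks up a $\gamma$ from scaling'' is not really the source of the prefactor. Theorem~\ref{Agmon} applied directly at parameter $\gamma$ bounds $\int_{\mathcal A_v}|\nabla\psi^{\gamma,v}_n|^2$ by the same $C_\epsilon e^{-2(1-\epsilon)\gamma\sqrt{-1-E^{\max}}\rho}$ as the function itself (this is how the paper uses it in Proposition~\ref{qm}), so the Cauchy--Schwarz product on the annulus already gives the full $e^{-2(1-\epsilon)\cdots}$ with no extra $\gamma$. The only term in your decomposition that genuinely produces a polynomial prefactor is the mass correction $\gamma^2|E_n(T_v)|\cdot Ce^{-2(1-\epsilon)\cdots}$, and your absorption-by-decreasing-$\epsilon$ argument handles it correctly. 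This does not affect the validity of your proof.
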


\begin{proof}
We start proving \eqref{pq2}. Let us first expand $q^\gamma(\tilde \phi^{\gamma,v}_n,\tilde \phi^{\gamma,v}_n)$:
\begin{multline*}
q^\gamma(\tilde \phi^{\gamma,v}_n,\tilde \phi^{\gamma,v}_n) = \int_{\Omega} \lvert \chi_v\rvert^2 \lvert \nabla \phi^{\gamma,v}_n \rvert^2 dx -\gamma \int_{\partial \Omega} \lvert \chi_v\rvert^2 \lvert \phi^{\gamma,v}_n\rvert^2 ds \\
+ \int_{\Omega} \lvert \nabla \chi_v\rvert^2 \lvert \phi^{\gamma,v}_n\rvert^2 dx +2\Re \int_{\Omega } \chi_v \nabla \phi^{\gamma,v}_n\overline{\nabla \chi_v \phi^{\gamma,v}_n} dx.
\end{multline*}
As $\supp \nabla \chi_v \subset B(v,2\rho)\backslash\overline{ B(v,\rho)}$, 
we can use Theorem~\ref{Agmon} to bound the cross-term:
\begin{align*}
\Big \lvert \int_{\Omega} \lvert \nabla \chi_v\rvert^2 \lvert \phi^{\gamma,v}_n\rvert^2 dx 
&+2\Re \int_{\Omega } \chi_v \nabla \phi^{\gamma,v}_n \overline{\nabla \chi_v \phi^{\gamma,v}_n} dx \Big\rvert \\
&\leq \left (  2\lVert \nabla \chi_v\rVert^2_\infty \int_{U_v\backslash B(v,\rho)} \lvert \psi^{\gamma,v}_n\rvert^2 dx + \lVert \chi_v\rVert^2_\infty \int_{U_v\backslash B(v,\rho)} \lvert \nabla\psi^{\gamma,v}_n\rvert^2 dx \right ) \\
&\leq C e^{-2(1-\epsilon)\gamma \sqrt{-1-E^{\max}} \rho}.
\end{align*}
We now focus on the main term:
\[
\int_{\Omega} \lvert \chi_v\rvert^2 \lvert \nabla \phi^{\gamma,v}_n\rvert^2 dx \leq \int_{U_v} \lvert \nabla \psi^{\gamma,v}_n\rvert^2 dx,\text{ and } \int_{\partial \Omega} \lvert \chi_v\rvert^2 \lvert \phi^{\gamma,v}_n\rvert^2 ds \leq \int_{\partial U_v} \lvert \psi ^{\gamma,v}_n\rvert^2 ds.
\]
On the other hand,
\begin{align*}
\int_{\Omega} \lvert\chi_v \rvert^2 \lvert \nabla \phi^{\gamma,v}_n\rvert^2 dx &\geq \int_{U_v\cap B(v,\rho)} \lvert \nabla \psi^{\gamma,v}_n\rvert^2 dx\\
&\geq \int_{U_v} \lvert \nabla \psi^{\gamma,v}_n\rvert^2 dx -C e^{-2(1-\epsilon)\gamma \sqrt{-1-E^{\max}} \rho},
\end{align*}
and,
\begin{align*}
\int_{\partial \Omega} \lvert \chi_v\rvert^2 \lvert\phi^{\gamma,v}_n\rvert^2 ds &\geq \int_{\partial(U_v\cap B(v,\rho))\backslash \partial B(v,\rho)} \lvert \psi^{\gamma,v}_n\rvert^2 ds \\
&= \int_{\partial U_v} \lvert \psi^{\gamma,v}_n\rvert^2 ds -\int_{\partial(U_v \backslash B(v,\rho))\backslash\partial B(v,\rho)} \lvert \psi^{\gamma,v}_n\rvert^2 ds.
\end{align*}
Notice that, as $U_v\backslash \overline{B(v,\rho)}$ is a Lipschitz domain there exists $K>0$ such that 
\begin{align}
\label{lip}
\lVert \psi^{\gamma,v}_n\rVert^2_{L^2(\partial(U_v \backslash \overline{B(v,\rho)}))} \leq K \lVert \psi^{\gamma,v}_n\rVert^2_{H^1(U_v\backslash \overline{B(v,\rho)})}.
\end{align}
Then, using \eqref{lip} and Theorem~\ref{Agmon} we obtain 
\[
\int_{\partial \Omega} \lvert \chi_v\rvert^2 \lvert \phi^{\gamma,v}_n\rvert^2 ds \geq \int_{\partial U_v} \lvert\psi^{\gamma,v}_n\rvert^2 ds -C e^{-2(1-\epsilon)\gamma\sqrt{-1-E^{\max}}\rho}.
\]
As $t^{\gamma,\alpha_v}(\psi^{\gamma,v}_n,\psi^{\gamma,v}_n)=\gamma^2 E_n(T_v)$,
\begin{align}
\label{estbil}
\left \lvert \int_{\Omega} \lvert \chi_v\rvert^2 \lvert \nabla \phi^{\gamma,v}_n \rvert^2 dx -\gamma \int_{\partial \Omega} \lvert \chi_v\rvert^2 \lvert \phi^{\gamma,v}_n\rvert^2 ds -\gamma^2 E_n(T_v)\right\rvert \leq \gamma C e^{-2(1-\epsilon)\gamma\sqrt{-1-E^{\max}}\rho},
\end{align}
which concludes the proof combining \eqref{estbil} with the estimate on the cross-term.

 Let us now prove \eqref{pq1}. As $i\neq j$, $\displaystyle \int_{U_v} \psi^{\gamma,v}_i \overline{\psi^{\gamma,v}_j}dx=0$. Then,
\begin{align*}
\langle \tilde\phi^{\gamma,v}_i,\tilde \phi^{\gamma,v}_j\rangle_{L^2(\Omega)}=\int_{U_v\backslash B(v,2\rho)} \left(\lvert \chi_v\circ F_v\rvert^2 -1\right ) \psi^{\gamma,v}_i \overline{\psi^{\gamma,v}_j} dx.
\end{align*}
We can conclude, as 
$\left \lvert \lvert \chi_v\rvert^2 -1\right \rvert \leq 1$,
 using Cauchy-Schwarz inequality and Theorem~\ref{Agmon}.

 To finish, let now focus on \eqref{pq3}. Let $i\neq j$, we have 
\[
q^\gamma(\tilde \phi^{\gamma,v}_i,\tilde \phi^{\gamma,v}_j) =\int_{\Omega} \lvert \chi_v\rvert^2 \nabla \phi^{\gamma,v}_i  \overline{\nabla \phi^{\gamma,v}_j }  dx  -\gamma\int_{\partial \Omega} \lvert\chi_v\rvert^2\phi^{\gamma,v}_i \overline{\phi^{\gamma,v}_j}ds + I(\gamma),
\]
where 
\[
I(\gamma)= \int_{\Omega} \left \{\lvert \nabla \chi_v\rvert^2 \phi^{\gamma,v}_i \overline{\phi^{\gamma,v}_j} + \phi^{\gamma,v}_i \nabla \chi_v  \overline{\nabla \phi^{\gamma,v}_j \chi_v} + \chi_v \nabla \phi^{\gamma,v}_i \overline{\nabla \chi_v \phi^{\gamma,v}_j}  \right \}dx.
\]
Using the fact that $\supp \nabla \chi_v \subset B(v,2\rho)\backslash \overline{B(v,\rho)}$, 
Cauchy-Schwarz and Theorem~\ref{Agmon} we get 
\begin{align}
\label{estI}
\lvert I(\gamma)\rvert \leq C e^{-2(1-\epsilon)\gamma\sqrt{-1-E^{\max}}\rho}.
\end{align}
By the spectral theorem we have $t^{\gamma}_v (\psi^{\gamma,v}_i,\psi^{\gamma,v}_j)=0$, which implies 
\begin{multline*}
\int_{\Omega} \lvert \chi_v\rvert^2 \nabla \phi^{\gamma,v}_i  \overline{\nabla \phi^{\gamma,v}_j }  dx  -\gamma\int_{\partial \Omega} \lvert\chi_v\rvert^2\phi^{\gamma,v}_i \overline{\phi^{\gamma,v}_j}ds \\
= \int_{U_v\backslash \overline{B(v,2\rho)}} (\lvert \chi_v\circ F_v \rvert^2-1) \nabla \psi^{\gamma,v}_i  \overline{\nabla \psi^{\gamma,v}_j }  dx
-\gamma\int_{\partial (U_v\backslash \overline{B(v,2\rho)})\backslash \partial B(v,2\rho)} (\lvert\chi_v\circ F_v\rvert^2-1)\psi^{\gamma,v}_i \overline{\psi^{\gamma,v}_j}ds.
\end{multline*}
We can use the same arguments as before and \eqref{lip} to obtain 
\begin{align}
\label{estII}
\left \lvert \int_{\Omega} \lvert \chi_v\rvert^2 \nabla \phi^{\gamma,v}_i \overline{\nabla \phi^{\gamma,v}_j }  dx  -\gamma\int_{\partial \Omega} \lvert\chi_v\rvert^2\phi^{\gamma,v}_i \overline{\phi^{\gamma,v}_j}ds\right \rvert \leq \gamma C e^{-2(1-\epsilon)\gamma\sqrt{-1-E^{\max}}\rho}.
\end{align}
Putting \eqref{estI} and \eqref{estII} together finishes the proof of \eqref{pq3}.
\end{proof}

\begin{lemma}\label{li}
For $\gamma$ large enough the family $(\tilde \phi^{\gamma,v}_n)_{(n,v)\in \displaystyle\cup_{l=1}^{K^\oplus}\mathcal S_l}$ is linearly independent.
\end{lemma}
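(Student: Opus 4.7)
The plan is to establish linear independence by showing that the Gram matrix of the family $(\tilde\phi^{\gamma,v}_n)$ is invertible for all sufficiently large $\gamma$. The family is finite, of cardinality $\mathcal{N}^\oplus$, so I need only control a finite-dimensional matrix.

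First I would fix an ordering of the indices $(n,v) \in \bigcup_{l=1}^{K^\oplus}\mathcal{S}_l$ and form the Gram matrix
\[
G^\gamma_{(i,v),(j,v')} \coloneqq \langle \tilde\phi^{\gamma,v}_i, \tilde\phi^{\gamma,v'}_j\rangle_{L^2(\Omega)}.
\]
I then show that $G^\gamma$ converges to the identity as $\gamma \to +\infty$, which immediately yields invertibility of $G^\gamma$ for $\gamma$ large enough, and hence linear independence. The entries split into three cases, each of which is already controlled by results proved above:
\begin{itemize}
\item If $v = v'$ and $i = j$, then by \eqref{qm1} one has $G^\gamma_{(i,v),(i,v)} = 1 + O(e^{-c\gamma})$ for some $c > 0$.
\item If $v \neq v'$, then $\supp \chi_v \cap \supp \chi_{v'} = \emptyset$ (as noted in Section~\ref{descriptqm}), so $G^\gamma_{(i,v),(j,v')} = 0$ exactly.
\item If $v = v'$ but $i \neq j$, then by \eqref{pq1} one has $|G^\gamma_{(i,v),(j,v)}| \le C e^{-c\gamma}$.
\end{itemize}

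Combining these three cases gives $G^\gamma = I_{\mathcal{N}^\oplus} + R^\gamma$ where the matrix $R^\gamma$ has norm $O(e^{-c\gamma})$ (for instance via the Frobenius bound, since $R^\gamma$ has a fixed number of entries, each of which is exponentially small). For $\gamma$ large enough, $\|R^\gamma\| < 1$, so $G^\gamma$ is invertible. Equivalently, given any linear combination $\sum_{(n,v)} c_{n,v}\tilde\phi^{\gamma,v}_n = 0$, testing against each $\tilde\phi^{\gamma,v'}_j$ produces the system $G^\gamma c = 0$, which forces $c = 0$.

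No real obstacle arises here: the proof is a routine Gram-matrix perturbation argument, and every ingredient is already in place thanks to Proposition~\ref{qm} and Lemma~\ref{pq}. The only point to be careful about is that $\mathcal{N}^\oplus$ does not depend on $\gamma$, so the size of $G^\gamma$ stays fixed as $\gamma \to +\infty$ and the exponentially small entries actually produce an exponentially small operator norm.
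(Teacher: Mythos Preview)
Your proof is correct and follows essentially the same approach as the paper: both argue via the Gram matrix, using \eqref{qm1} for the diagonal, \eqref{pq1} for same-vertex off-diagonal entries, and disjointness of supports for different-vertex entries. The only cosmetic difference is that the paper concludes by noting $\det(G)=1+o(1)$ while you conclude by bounding $\|G^\gamma - I\|$; both yield invertibility for large $\gamma$.
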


\begin{proof}
Let us denote by $G$ the Gramian matrix associated with $(\tilde \phi^{\gamma,v}_n)_{(n,v)\in \cup_{l=1}^{K^\oplus} S_l}$ 
which entries are 
$G_{i,j}=\langle\tilde  \phi^{\gamma,v_i}_{n_i},\tilde\phi^{\gamma,v_j}_{n_j}\rangle$, 
where $(n_i,v_i),(n_j,v_j) \in \displaystyle \cup_{l=1}^{K^\oplus} \mathcal S_l$. 
First, the diagonal is simply composed of $1+o(1)$ as $\gamma \to +\infty$, according to \eqref{qm1}. 
Secondly, if $(n_i,v_i)\neq (n_j,v_j)$ then $v_i =v_j$ and $n_i\neq n_j$ or $v_i\neq v_j$. 
In the first case, we already know by \eqref{pq1} that $G_{i,j}=o(1)$ as $\gamma\to+\infty$. 
In the second case, $\supp \chi_{v_i}\cap \supp \chi_{v_j}=\emptyset$, then $G_{i,j}=0$.
Necessarily, $\det(G)=1+o(1)$ as $\gamma \to +\infty$. In particular, $\det(G)\neq 0$ for $\gamma$ large enough, which gives us the result.
\end{proof}

\subsection{Asymptotic behavior of the first eigenvalues on polygons}\label{sectasvppol}

In this section we prove Theorem~\ref{Intro_Thasvp} for polygons with straight edges.

\begin{theorem}\label{asexp}
Let $\Omega\subset \mathbb R^2$ be a polygon with straight edges. 
For any $\epsilon \in (0,1)$ there exists $C >0$ such that for all $n\in\{1,...,\mathcal N^\oplus\}$ and for $\gamma$ large enough, 
\[
\lvert E_n(Q^\gamma)-\gamma^2E_n(T^{\oplus})\rvert \leq C e^{-(1-\epsilon)\gamma\sqrt{-1-E^{\max}}\rho}.
\]
\end{theorem}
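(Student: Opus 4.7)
The plan is to combine the min-max principle with a spectral approximation argument, using the quasi-modes already constructed in Section~\ref{descriptqm} together with the properties established in Section~\ref{propqm}.

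For the upper bound, I would order the pairs $(n,v)$ with $v\in\mathcal V$, $1\le n\le\mathcal N_v$ as $(n_1,v_1),\ldots,(n_{\mathcal N^\oplus},v_{\mathcal N^\oplus})$ so that $E_{n_k}(T_{v_k}) = E_k(T^\oplus)$. For each $N\in\{1,\ldots,\mathcal N^\oplus\}$, consider the test subspace
\[
G_N\coloneqq\Span\{\tilde\phi^{\gamma,v_1}_{n_1},\ldots,\tilde\phi^{\gamma,v_N}_{n_N}\},
\]
which has dimension $N$ for $\gamma$ large, by Lemma~\ref{li}. Setting $\eta(\gamma)\coloneqq e^{-2(1-\epsilon)\gamma\sqrt{-1-E^{\max}}\rho}$, the relations \eqref{qm1}, \eqref{pq1}, \eqref{pq2}, \eqref{pq3} imply that for every $\psi=\sum_k c_k\tilde\phi^{\gamma,v_k}_{n_k}\in G_N$,
\[
\|\psi\|^2_{L^2(\Omega)} = \sum_k|c_k|^2 + O\bigl(\eta(\gamma)\bigr)\sum_k|c_k|^2, \quad q^\gamma(\psi,\psi)=\sum_k\gamma^2 E_k(T^\oplus)|c_k|^2 + O\bigl(\gamma\,\eta(\gamma)\bigr)\sum_k|c_k|^2.
\]
(The cross-vertex terms vanish identically because $\supp\chi_v\cap\supp\chi_{v'}=\emptyset$.) The min-max principle then yields
\[
E_N(Q^\gamma)\le\sup_{\psi\in G_N\setminus\{0\}}\frac{q^\gamma(\psi,\psi)}{\|\psi\|^2}\le\gamma^2 E_N(T^\oplus) + C\gamma\,\eta(\gamma),
\]
and the polynomial factor $\gamma$ can be absorbed into the exponential by slightly shrinking $\epsilon$, giving the claimed upper bound.

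For the lower bound I would invoke the spectral approximation result recalled in Appendix~\ref{speccorol}. Proposition~\ref{qm} provides a system of approximate eigenpairs $\{(\gamma^2 E_n(T_v),\tilde\phi^{\gamma,v}_n)\}$ with exponentially small residual, and Lemma~\ref{pq} together with the disjointness of supports across distinct vertices shows that the family of quasi-modes associated to a single eigenvalue cluster $\mathcal S_l$ is exponentially close to being orthonormal. Applied to each distinct eigenvalue $\lambda_l\in\Lambda$ of $T^\oplus$ with its multiplicity $m_l$, the appendix result then gives at least $m_l$ eigenvalues of $Q^\gamma$ in an interval of size $O(\eta(\gamma)^{1/2})$ around $\gamma^2\lambda_l$. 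Summing the multiplicities $m_l$ and comparing with the upper bound forces
\[
E_n(Q^\gamma)=\gamma^2 E_n(T^\oplus)+O\bigl(e^{-(1-\epsilon)\gamma\sqrt{-1-E^{\max}}\rho}\bigr)
\]
for every $n\in\{1,\ldots,\mathcal N^\oplus\}$.

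The main obstacle is to ensure that the eigenvalue clusters of $Q^\gamma$ corresponding to distinct $\lambda_l$ do not merge and that the count of eigenvalues assigned to each cluster is exactly $m_l$; this is where the separation of scales matters. Since $\Lambda$ is a finite set of $\gamma$-independent real numbers, the minimal gap $\min_{l\ne l'}|\lambda_l-\lambda_{l'}|$ is a positive constant, whereas the residual $\gamma\eta(\gamma)^{1/2}$ decays exponentially, so for $\gamma$ large the $K^\oplus$ clusters around $\gamma^2\lambda_l$ are well separated and the dimension counting goes through. A minor technical point is to track how the factor $\gamma$ in \eqref{pq2}, \eqref{pq3} and the square-root in the spectral approximation lemma combine; both are handled by the same trick of relaxing $\epsilon$ slightly, which is why the statement is formulated for arbitrary $\epsilon\in(0,1)$.
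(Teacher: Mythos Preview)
Your upper-bound argument is correct and matches the paper's Proposition~\ref{asvp}, inequality~\eqref{asvp1}. The gap is in the lower bound. The spectral approximation result from Appendix~\ref{speccorol} gives you \emph{at least} $m_l$ eigenvalues of $Q^\gamma$ in an exponentially small window around $\gamma^2\lambda_l$, and your upper bound via min-max also only gives ``at least'' information (namely, at least $N$ eigenvalues below $\gamma^2 E_N(T^\oplus)+o(1)$). Neither of these, nor their combination, yields an \emph{at most} statement: nothing in your argument excludes an extra eigenvalue of $Q^\gamma$ sitting below $\gamma^2\lambda_1$, or in a gap between two clusters, or just above $\gamma^2 E^{\max}$. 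The sentence ``summing the multiplicities $m_l$ and comparing with the upper bound forces\ldots'' does not go through: if there were such a spurious eigenvalue, the first $\mathcal N^\oplus$ eigenvalues of $Q^\gamma$ would be shifted down by one index relative to the clusters, and every inequality you have written would still be satisfied. Your final paragraph addresses only the separation of the clusters from each other, not the absence of eigenvalues outside the clusters.

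The paper supplies the missing ``at most'' input by an independent, rougher lower bound (Proposition~\ref{asvp}, inequality~\eqref{asvp2}): a partition of unity (IMS formula~\eqref{IMSpol}) plus Dirichlet bracketing compares $Q^\gamma$ with the direct sum of the sector operators and a Robin Laplacian on a smooth domain, yielding $E_{m_0+\cdots+m_l+1}(Q^\gamma)\ge \gamma^2\lambda_{l+1}-c$. This polynomial-error bound is enough to confine the first $\mathcal N^\oplus$ eigenvalues to disjoint intervals $I_l$ of width $O(1)$ around $\gamma^2\lambda_l$, with exactly $m_l$ in each. Only then does the spectral approximation sharpen the location within each $I_l$ to exponential precision. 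Without this bracketing step, the exact-count argument cannot be closed.
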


\begin{proof}
The proof of Theorem~\ref{asexp} requires two steps. First, we prove an upper bound and a lower bound for the eigenvalues of $Q^\gamma$, using respectively the properties of the quasi-modes and a partition of unity. Secondly, to prove the exponential decay of the remainder we use a spectral approximation result. 

Recall that $\Lambda$ is the set of the eigenvalues of the operator $T^\oplus$ ordered in the increasing way and counted \emph{without} multiplicity, $K^\oplus\coloneqq \#\Lambda$ and we denote by $m_l$ the multiplicity of $\lambda_l \in \Lambda$ as an eigenvalue of $T^\oplus$, see Section~\ref{modelop}.
\begin{proposition}\label{asvp}
For any $\epsilon \in(0,1)$ there exist $C >0$ and $c>0$ such that, for all $0\leq l \leq K^\oplus$ and for $\gamma$ large enough,
\begin{align}
\label{asvp1}
E_{m_1+...+m_l}(Q^\gamma) &\leq \gamma^2\lambda_l +C \gamma^2 e^{-2(1-\epsilon)\gamma \sqrt{-1-E^{\max}}\rho},\\
\label{asvp2}
E_{m_0+...+m_l+1}(Q^\gamma) &\geq \gamma^2 \lambda_{l+1}-c,
\end{align}
with the convention $m_0=0$.
\end{proposition}

\begin{proof}
We begin proving \eqref{asvp1}.
Let $l\in\{1,...,K^\oplus\}$ be fixed.
 In the sequel we denote $d\coloneqq \displaystyle \sum _{j=1}^l m_j.$ By the min-max principle:
\[
E_d(Q^\gamma)=\inf_{\substack{F\subset D(q^\gamma)\\ \dim(F)=d}} \sup_{\substack{\psi\in F\\ \psi \neq 0}}\frac{q^{\gamma}(\psi,\psi)}{\lVert \psi\rVert^2}.
\]
We introduce 
\[
\mathcal F^\gamma\coloneqq \Span \{\tilde \phi^{\gamma,v}_n, (n,v)\in \cup_{j=1}^l \mathcal S_j\}.
\]
For simplicity we denote by $(\tilde \phi_1,...,\tilde \phi_d)$ the elements of $\{\tilde \phi^{\gamma,v}_n, (n,v)\in \displaystyle\cup_{j=1}^l \mathcal S_j\}$. By Lemma~\ref{li}, $\dim(\mathcal F ^\gamma)=d$ for $\gamma$ large enough and
\begin{align}
\label{minmax1}
E_d(Q^\gamma)\leq \sup_{\substack{ \psi \in \mathcal F^\gamma \\ \psi \neq 0}}\frac{q^{\gamma}( \psi, \psi)}{\lVert \psi\rVert^2} 
=\sup_{\substack{(c_1,...,c_d)\in \mathbb C^d\\ (c_1,...,c_d)\neq(0,...,0)}} \frac{q^{\gamma}(\sum_{j=1}^d c_j \tilde \phi_j,\sum_{j=1}^d c_j  \tilde\phi_j)}{\lVert \sum_{j=1}^d c_j \tilde\phi_j\rVert^2}.
\end{align}
Expanding the numerator we get
\[
q^\gamma(\sum_{j=1}^d c_j \tilde \phi_j,\sum_{j=1}^d c_j \tilde \phi_j)=\sum_{j=1}^l \lvert c_j\rvert^2 q^\gamma(\tilde \phi_j,\tilde \phi_j) +2\Re \sum_{j<k} c_j \overline{c_k} q^{\gamma}(\tilde \phi_j,\tilde \phi_k).
\]
We can use \eqref{pq2} and \eqref{pq3} to obtain 
\begin{align*}
q^\gamma(\sum_{j=1}^d c_j \tilde \phi_j,\sum_{j=1}^d c_j \tilde \phi_j) \leq \sum_{j=1}^l \lvert c_j \rvert^2 (\gamma^2\lambda_l&+\gamma C e^{-2(1-\epsilon)\gamma \sqrt{-1-E^{\max}}\rho} )\\
&+2 \sum_{j<k} \lvert c_j c_k\rvert \gamma C e^{-2(1-\epsilon)\gamma \sqrt{-1-E^{\max}}\rho}.
\end{align*}
As $\displaystyle \sum_{j<k} \lvert c_j c_k\rvert \leq d \displaystyle \sum_j \lvert c_j\rvert^2$, we can write 
\begin{align}
\label{major1}
q^{\gamma}(\sum_{j=1}^d c_j \tilde \phi_j,\sum_{j=1}^d c_j \tilde \phi_j) \leq \left(\gamma^2\lambda_l + \gamma C e^{-2(1-\epsilon)\gamma \sqrt{-1-E^{\max}}\rho}\right ) \sum_{j=1}^d \lvert c_j \rvert^2.
\end{align}
The denominator expands as 
\[
\lVert \sum_{j=1}^d c_j \tilde \phi_j\rVert^2=\sum_{j=1}^d \lvert c_j \rvert^2 \lVert \tilde \phi_j\rVert^2 
+ 2 \Re \sum_{j<k} c_j \overline{c_k}\langle \tilde \phi_j,\tilde \phi_k \rangle.
\]
Then, using \eqref{qm1} and \eqref{pq1} we have 
\begin{align}
\label{major2}
\left \lvert \lVert \sum_{j=1}^d c_j \tilde \phi_j \rVert^2 - \sum_{j=1}^l \lvert c_j\rvert^2\right \rvert \leq C e^{-2(1-\epsilon)\gamma \sqrt{-1-E^{\max}}\rho} \left( \sum_{j=1}^l \lvert c_j \rvert^2 \right).
\end{align}
Combining \eqref{major1} and \eqref{major2} we first get :
\begin{align*}
\frac{q^{\gamma}(\sum_{j=1}^d c_j\tilde  \phi_j,\sum_{j=1}^d c_j \tilde \phi_j)}{\lVert \sum_{j=1}^d c_j \phi_j\rVert^2} 
&\leq \left(\frac{\gamma^2\lambda_l}{\lVert \sum_{j=1}^d c_j \tilde \phi_j\rVert^2}  + \frac{\gamma C e^{-2(1-\epsilon)\gamma \sqrt{-1-E^{\max}}\rho} }{\lVert \sum_{j=1}^d c_j \tilde \phi_j\rVert^2} \right )\sum_{j=1}^d \lvert c_j\rvert^2 \\
&\leq \frac{\gamma^2\lambda_l}{1+C e^{-2(1-\epsilon)\gamma \sqrt{-1-E^{\max}}\rho} }+ \frac{\gamma C e^{-2(1-\epsilon)\gamma \sqrt{-1-E^{\max}}\rho}}{1-C e^{-2(1-\epsilon)\gamma \sqrt{-1-E^{\max}}\rho}}.
\end{align*}
Recall that there exists 
$(n,v) \in \mathcal S_l$ such that $\gamma^2\lambda_l=\gamma^2 E_n(T^1_v)$. Then, 
$-\gamma^2\lambda_l\leq -\gamma^2 \min_{v\in\mathcal V} E_1(T^1_v)$ 
and one has, for $\gamma$ large enough
\[
\frac{q^{\gamma}(\sum_{j=1}^d c_j \tilde \phi_j,\sum_{j=1}^d c_j \tilde \phi_j)}{\lVert \sum_{j=1}^d c_j \tilde \phi_j\rVert^2} \leq
\gamma^2\lambda_l + C \gamma^2 e^{-2(1-\epsilon)\gamma \sqrt{-1-E^{\max}}\rho}.
\]
This concludes the proof of \eqref{asvp1} thanks to \eqref{minmax1}. 

We now focus on the lower bound. Here $l\in\{0,.., K^\oplus\}$ and 
$d\coloneqq\displaystyle\sum_{j=0}^l m_j$.
Using the same $(\chi_v)_{v\in\mathcal V}$ as before, we define 
$
\chi_0\coloneqq 1-\sum_{v\in\mathcal V} \chi_v$ and for $v\in\mathcal V\cup\{0\}$,
\[
\tilde \chi_v(x)\coloneqq \frac{\chi_v(x)}{\sqrt{\sum_{v\in\mathcal V\cup\{0\}}\chi_v^2(x)}},\quad x\in\Omega,
\]
such that $\sum_{v\in\mathcal V\cup\{0\}} \tilde \chi_v^2= 1$ on $\Omega$.
Then, for all $\psi \in D(q^\gamma)$ we have
\begin{align}
\label{IMSpol}
q^\gamma(\psi,\psi)=\sum_{v\in\mathcal V \cup \{0\}} q^{\gamma}(\psi \tilde\chi_v,\psi\tilde \chi_v) -\sum_{v\in\mathcal V\cup \{0\}} \lVert \psi\nabla \tilde \chi_v\rVert^2_{L^2(\Omega)}.
\end{align}
Let us introduce some notation. 
Let $V(x)\coloneqq  \sum_{v\in\mathcal V\cup \{0\}} \lvert \nabla \tilde \chi_v(x)\rvert^2$, $\Omega_0\coloneqq \Omega\backslash  \bigcup_{v\in\mathcal V}\overline{ B(v,\rho)}$ 
and for $v\in\mathcal V$ we denote 
$\Omega_{v,2\rho}\coloneqq \Omega\cap B(v,2\rho)$ and 
$\Gamma_{v,2\rho}\coloneqq \partial \Omega_{v,2\rho}\backslash \partial B(v,2\rho)$.
By definition of $V$, there exists $c>0$ such that for all $x\in\Omega$,
\begin{align}
\label{majVpol}
\lVert V\rVert_{\infty} \leq c.
\end{align}
We also introduce
\[
q^{\gamma,V}_v(\psi,\psi)=\int_{\Omega_{v,2\rho}}\left(\lvert \nabla \psi \rvert^2-V(x) \lvert \psi\rvert^2\right) dx -\gamma \int_{\Gamma_{v,2\rho}} \lvert \psi \rvert^2 ds,
\]
where $D(q^{\gamma,V}_v)\coloneqq \{ \psi \in H^1(\Omega_{v,2\rho}), \psi(x)=0 \text{ for } x\in \partial \Omega_{v,2\rho} \backslash \Gamma_{v,2\rho}\}$ and 
\[
q^{\gamma,V}_0(\psi,\psi)=\int_{\Omega_0} \left( \lvert \nabla \psi\rvert^2 -V(x)\lvert \psi\rvert^2 \right) dx -\gamma\int_{\partial \Omega_0\cap \partial \Omega} \lvert \psi \rvert^2 ds,
\]
where $D(q^{\gamma,V}_0)\coloneqq \{ \psi \in H^1(\Omega_0), \psi(x)=0 \text{ for } x\in \partial \Omega_0\backslash \partial \Omega\}$.
Notice that, if 
$\psi \in \bigoplus_{v\in\mathcal V\cup \{0\}} D(q^{\gamma,V}_v)$, then $\psi \in D(q^\gamma)$ and $\lVert \psi \rVert^2_{L^2(\Omega)}= \sum_{v\in\mathcal V\cup\{0\}} \lVert \psi \tilde\chi_v \rVert^2_{L^2(\Omega)}$. By the min-max principle and \eqref{IMSpol} we can write for all $n\in\mathbb N$,
\[
E_n(Q^\gamma)\geq E_n(\bigoplus_{v\in\mathcal V\cup \{0\}} Q^{\gamma,V}_v).
\]
By definition, $\Omega_0\cap \mathcal V =\emptyset$. 
Moreover, $\Omega_0$ does not depend on $\gamma$ and we can extend it in a smooth way to obtain a domain with a Lipschitz, $C^4$ boundary which we call $\Omega^{\text{reg}}$. 
We define
\[
q^{\gamma,\text{reg}}(\psi,\psi)=\int_{\Omega^{\text{reg}}}\lvert \nabla \psi \rvert^2 dx -\gamma \int_{\partial \Omega^{\text{reg}}} \lvert \psi\rvert^2 ds,\quad \psi\in H^1(\Omega^{\text{reg}}).
\]
By \cite[Theorem 1]{Pan}, we know that there exists $ C >0$ such that, for $\gamma$ large enough,
\begin{align*}
E_1(Q^{\gamma,\text{reg}}) \geq -\gamma^2 - C\gamma.
\end{align*}
In addition, by the min-max principle and \eqref{majVpol} we also have, for all $n \in\mathbb N$, 
$
E_n(Q^{\gamma,V}_0) \geq E_n(Q^{\gamma,\text{reg}})-c.
$
Then, for $\gamma$ large enough,
\begin{align}
\label{minreg}
E_1(Q^{\gamma,V}_0)\geq -\gamma^2 -C \gamma -c.
\end{align} 
On the other hand,  extending $\psi \in D(q^{\gamma,V}_v)$ by zero and using the min-max principle and \eqref{majVpol}, one can write for all $n\leq \mathcal N_v$,
\[
E_n(Q^{\gamma,V}_v)\geq \gamma^2 E_n(T_v)-c.
\]
In particular,
\begin{align*}
E_{d+1}(\bigoplus_{v\in\mathcal V} Q^{\gamma,V}_v) \geq \gamma^2 E_{d+1}(T^{\oplus})-c =\gamma^2\lambda_{l+1} -c.
\end{align*}
Combining it with \eqref{minreg}, we finally get
$
E_{d+1}(Q^\gamma)\geq \gamma^2 \lambda_{l+1} -c,
$
since $\lambda_{l+1}<-1$. This concludes the proof of \eqref{asvp2}.
\end{proof}
This proposition tells us that the eigenvalues of $Q^\gamma$ are gathered in clusters. For each $n\in\{1,...,\mathcal N^\oplus\}$, there exists $l\in\{0,...,K^\oplus-1\}$ such that $m_0+m_1+...+m_l+1\leq n \leq m_0+...+m_{l+1}$. Then, $\lambda_{l+1}=E_n(T^\oplus)$ and 
\[
E_n(Q^\gamma)\in I_{l+1}\coloneqq (\gamma^2\lambda_{l+1}-c, \gamma^2\lambda_{l+1} +C \gamma^2 e^{-2(1-\epsilon)\gamma \sqrt{-1-E^{\max}}\rho}),
\]
for $\gamma$ large enough.
Notice that, as $\lambda_l <\lambda_{l+1}$ we have $I_l\cap I_{l+1}=\emptyset$ for large $\gamma$: the $I_l$ are disjoint sets.

In order to conclude, we can now state the spectral approximation result, which proof is recalled in Appendix~\ref{speccorol}.
\begin{proposition}\label{nvp}
Let $A$ be a self-adjoint operator acting on a Hilbert space $\mathcal H$ and $\lambda \in\mathbb R$. If there exist $\psi_1,...,\psi_n \in D(A)$  linearly independent and $\eta >0$ such that 
\[
\lVert (A-\lambda)\psi_j\rVert \leq \eta \lVert\psi_j\rVert,\quad j=1,...,n,
\]
then, 
\[
\dim \Ran P_A (\lambda- n^{3/2}\eta \sqrt{\frac{\beta_{\max}}{\beta_{\min}}},\lambda+ n^{3/2}\eta \sqrt{\frac{\beta_{\max}}{\beta_{\min}}} ) \geq n,
\]
where $\beta_{\min}$ (resp. $\beta_{\max}$) is the minimal (resp. maximal) eigenvalue of the Gramian matrix of the family $(\psi_j)_j$ and $P_A(a,b)$ stands for the spectral projection of $A$ on the interval $(a,b)$.  
In particular, if $\Specess(A)\cap (\lambda-n^{3/2}\eta \sqrt{\frac{\beta_{\max}}{\beta_{\min}}},\lambda+n^{3/2}\eta \sqrt{\frac{\beta_{\max}}{\beta_{\min}}} )=\emptyset$, there exist at least $n$ eigenvalues of $A$ in  $(\lambda-n^{3/2}\eta \sqrt{\frac{\beta_{\max}}{\beta_{\min}}},\lambda+n^{3/2}\eta \sqrt{\frac{\beta_{\max}}{\beta_{\min}}} )$.
\end{proposition}
In order to apply Proposition~\ref{nvp}, let us recall \eqref{qm2}. 
For all $1\leq l\leq K^\oplus$, and for all $(n,v)\in\mathcal S_l$, 
\[
\lVert (Q^\gamma-\gamma^2\lambda_l)\tilde \phi^{\gamma,v}_n\rVert \leq C e^{-(1-\epsilon)\gamma \sqrt{-1-E^{\max}}\rho}\lVert \tilde\phi^{\gamma,v}_n\rVert.
\]
 Let $\eta\coloneqq C e^{-(1-\epsilon)\gamma \sqrt{-1-E^{\max}}\rho}$. Then by Proposition~\ref{nvp},
\[
\dim \Ran P_{Q^\gamma} (\gamma^2\lambda_l-m_l^{3/2}\eta \sqrt{\frac{\beta_{\max}}{\beta_{\min}}},\gamma^2\lambda_l+m_l^{3/2}\eta \sqrt{\frac{\beta_{\max}}{\beta_{\min}}}) \geq m_l,
\]
for $\gamma$ large enough as $(\tilde \phi^{\gamma,v}_n)_{(n,v)\in S_l}$ is linearly independent by Lemma~\ref{li}. Notice that, by \eqref{qm1} and \eqref{pq1}, if we denote by $(\beta_i)_{i=1}^{m_l}$ the eigenvalues of the Gramian matrix of $(\tilde \phi^{\gamma,v}_n)_{(n,v)\in S_l}$ with $\beta_1=\beta_{\min}$ and $\beta_{m_l}=\beta_{\max}$, then for all $1\leq i \leq m_l$,
\begin{align}
\label{closevp}
\lvert \beta_i -1\rvert \leq C e^{-2(1-\epsilon)\gamma \sqrt{-1-E^{\max}}\rho},
\end{align}
and $\sqrt{\displaystyle \frac{\beta_{\max}}{\beta_{\min}}}=O(1)$, as $\gamma\to+\infty$. 
Moreover, as $\Specess(Q^\gamma)=\emptyset$ the operator $Q^\gamma$ admits at least $m_l$ eigenvalues in 
$\tilde I_l\coloneqq (\gamma^2\lambda_l-m_l^{3/2}\eta \sqrt{\frac{\beta_{\max}}{\beta_{\min}}},\gamma^2\lambda_l+{m_l}^{3/2}\eta \sqrt{\frac{\beta_{\max}}{\beta_{\min}}})$. But, as $\tilde I_l\cap I_{l+1} =\emptyset$, $\tilde I_l\cap \tilde I_{l-1}=\emptyset $ and $\tilde I_l \cap I_l \neq \emptyset$, we can conclude by the previous corollary that $Q^\gamma$ admits exactly $m_l$ eigenvalues in $\tilde I_l$ and these eigenvalues correspond to the $l$th cluster mentioned above, which concludes the proof.

\end{proof}

\subsection{Approximation of eigenspaces}\label{appeigen}

We are now going to prove that the corresponding eigenfunctions are, in a sense, exponentially close to linear combinations of the quasi-modes $(\tilde \phi^{\gamma,v}_j)$. Let us first introduce the distance between subspaces of a Hilbert space which main properties are gathered in \cite{HS}. Let $E$ and $F$ be closed subspaces of a Hilbert space $\mathcal H$. We define the non-symmetric distance between $E$ and $F$ as:
\[
d(E,F)\coloneqq \inf_{x \in E\backslash\{0\}}\frac{\dist(x,F)}{\lVert x \rVert},
\]
where $\dist(x,F)=\inf_{y\in F} \lVert x-y \rVert$. The following theorem will be the main argument in the proof of closeness of quasi-modes. 
\begin{theorem}[\cite{HS}]\label{THeigen}
Let $A$ be a self-adjoint operator in a Hilbert space $\mathcal H$. Let $I\subset \mathbb R$ be a compact interval, $\psi_1,...,\psi_n \in D(A)$ be linearly independent and $\mu_1,...,\mu_n \in I$. Suppose that there exists $\eta >0$ such that
\[
A\psi_j=\mu_j\psi_j+r_j \quad \text{with}\quad \lVert r_j\rVert \leq \eta,
\]
and $a>0$ such that $\Spec(A)\cap \left((I+B(0,2a))\backslash I\right)=\emptyset$. Then, if $F$ is the space spanned by $\psi_1,...,\psi_n$ and $E\coloneqq \Ran P_A(I)$, 
\[
\dist(F,E) \leq \frac{\sqrt{n} \eta}{a \sqrt{ \beta_{\min}}},
\]
where $\beta_{\min}$ stands for the minimal eigenvalue of the Gramian matrix of $(\psi_j)_j$.
\end{theorem}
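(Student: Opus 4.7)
\bigskip

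\noindent\textbf{Proof plan for Theorem~\ref{THeigen}.}
The plan is to exploit the spectral gap of width $2a$ around $I$ to control the components of the $\psi_j$ in the spectral subspace $E^\perp := \Ran P_A(\mathbb R\setminus I)$, and then to pass from individual $\psi_j$'s to linear combinations by means of the Gramian. Let $P := P_A(I)$, so $E = \Ran P$ and $P^\perp := I - P$ is the orthogonal projection onto $E^\perp$. Note that, since the spectrum of $A$ restricted to $\Ran P^\perp$ lies outside $I + B(0,2a)$, the spectral theorem yields $\lVert (A-\mu)y\rVert \geq 2a\,\lVert y\rVert$ for every $\mu\in I$ and every $y\in D(A)\cap \Ran P^\perp$.

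The first step is the single-vector estimate. Apply $P^\perp$ to the identity $A\psi_j - \mu_j\psi_j = r_j$; since $P^\perp$ commutes with $A$ (as a spectral projection), one obtains
\[
(A-\mu_j)\, P^\perp\psi_j = P^\perp r_j,
\]
and the spectral gap inequality above with $\mu = \mu_j\in I$ gives
\[
\lVert P^\perp\psi_j\rVert \leq \frac{1}{2a}\,\lVert P^\perp r_j\rVert \leq \frac{\eta}{2a}.
\]

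The second step is to propagate this to arbitrary $x\in F$. Write $x = \sum_{j=1}^n c_j\psi_j$ and observe that, since $P^\perp$ is the orthogonal projection onto $E^\perp$, one has $\dist(x,E) = \lVert P^\perp x\rVert$. Then
\[
\lVert P^\perp x\rVert \leq \sum_{j=1}^n |c_j|\,\lVert P^\perp\psi_j\rVert \leq \frac{\eta}{2a}\sum_{j=1}^n |c_j| \leq \frac{\eta\sqrt{n}}{2a}\,\Big(\sum_{j=1}^n|c_j|^2\Big)^{1/2},
\]
where the last step is Cauchy--Schwarz. On the other hand, writing $G$ for the Gramian matrix of $(\psi_j)_j$ and $c=(c_1,\dots,c_n)$, we have
\[
\lVert x\rVert^2 = \langle Gc,c\rangle \geq \beta_{\min}\sum_{j=1}^n |c_j|^2,
\]
so that $\big(\sum_j |c_j|^2\big)^{1/2}\le \lVert x\rVert/\sqrt{\beta_{\min}}$. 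Combining these inequalities gives
\[
\frac{\dist(x,E)}{\lVert x\rVert} \leq \frac{\sqrt{n}\,\eta}{2a\,\sqrt{\beta_{\min}}}
\]
for every $x\in F\setminus\{0\}$, which taken over $x\in F$ yields the claimed bound (with room to spare, the factor $2$ in the denominator being absorbed into the statement).

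I do not expect a serious obstacle: once the commutation $AP^\perp = P^\perp A$ and the quantitative spectral gap $\lVert (A-\mu)|_{\Ran P^\perp}\rVert^{-1}\le 1/(2a)$ are invoked, the rest is a Cauchy--Schwarz argument tied together by the Gramian lower bound $G\ge \beta_{\min} I$. The only point demanding care is the interpretation of $\dist(F,E)$: the definition recalled in the excerpt suggests taking a supremum over unit vectors of $F$ (otherwise the quantity collapses as soon as $F\cap E\neq \{0\}$), and the argument above is uniform in $x\in F\setminus\{0\}$, which is exactly what is needed.
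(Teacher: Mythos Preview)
The paper does not prove Theorem~\ref{THeigen}; it is quoted from \cite{HS} and used as a black box. Your argument is the standard proof of this result and is correct: the spectral-gap bound $\lVert (A-\mu)\rvert_{\Ran P^\perp}\rVert^{-1}\le 1/(2a)$ for $\mu\in I$ follows exactly as you say, the single-vector estimate $\lVert P^\perp\psi_j\rVert\le \eta/(2a)$ is immediate, and the passage to linear combinations via Cauchy--Schwarz and the Gramian lower bound is clean (and indeed produces the sharper constant $\sqrt{n}\,\eta/(2a\sqrt{\beta_{\min}})$).

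Your closing remark about the definition of $\dist(F,E)$ is well taken: the paper's displayed definition with an infimum is a typo. The intended quantity is the usual non-symmetric gap $\sup_{x\in F\setminus\{0\}}\dist(x,E)/\lVert x\rVert=\lVert P^\perp\rvert_F\rVert$, which is how the result is stated in \cite{HS} and how it is used in the proof of Theorem~\ref{eigenspace}. Your argument bounds exactly this supremum, so nothing further is needed.
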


We introduce
\[
\mathcal F^\gamma_l\coloneqq \Span\{\tilde \phi^{\gamma,v}_j, (j,v)\in \mathcal S_l\}.
\]
We can now state the theorem on the eigenspaces.

\begin{theorem}\label{eigenspace}
For any $\epsilon \in(0,1)$, there exists $C>0$ such that, for all $l\in\{1,...,K^\oplus\}$ and for $\gamma$ large enough,
\[
\dist(\mathcal F^\gamma_l, \mathcal E^\gamma_l) \leq C e^{-(1-\epsilon)\gamma \sqrt{-1-E^{\max}}\rho},
\]
where $\mathcal E^\gamma_l\coloneqq \Ran P_{Q^\gamma}(\gamma^2\lambda_l -C e^{-(1-\epsilon)\gamma \sqrt{-1-E^{\max}}\rho},\gamma^2\lambda_l +C e^{-(1-\epsilon)\gamma \sqrt{-1-E^{\max}}\rho} )$.
\end{theorem}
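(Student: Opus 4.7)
The strategy is a direct application of Theorem~\ref{THeigen} (the closeness of subspaces from \cite{HS}) with $A = Q^\gamma$, the family $\{\tilde \phi^{\gamma,v}_j\}_{(j,v)\in \mathcal S_l}$ as the quasi-modes $\psi_j$, and $\mu_j = \gamma^2 \lambda_l$ for every $j$. This is natural because, by definition of $\mathcal S_l$, every quasi-mode in $\mathcal F^\gamma_l$ is associated with $E_j(T_v) = \lambda_l$, so the same target value $\gamma^2\lambda_l$ works for all of them simultaneously.

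First, I would fix $\epsilon \in (0,1)$ and set $I \coloneqq [\gamma^2\lambda_l - \eta', \gamma^2\lambda_l + \eta']$ with $\eta' \coloneqq C\,e^{-(1-\epsilon)\gamma\sqrt{-1-E^{\max}}\rho}$, i.e.\ the interval defining $\mathcal E^\gamma_l$. From \eqref{qm2} together with \eqref{qm1}, one gets
\[
\bigl\|(Q^\gamma - \gamma^2\lambda_l)\tilde\phi^{\gamma,v}_j\bigr\|_{L^2(\Omega)} \leq C\,e^{-(1-\epsilon)\gamma\sqrt{-1-E^{\max}}\rho}
\]
uniformly for $(j,v)\in \mathcal S_l$, so we may take $\eta \coloneqq C\,e^{-(1-\epsilon)\gamma\sqrt{-1-E^{\max}}\rho}$ in the hypothesis of Theorem~\ref{THeigen}. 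The linear independence of the $\tilde\phi^{\gamma,v}_j$ for large $\gamma$ is already provided by Lemma~\ref{li}, and the computation of the Gramian performed there (see \eqref{closevp}) shows $\beta_{\min} = 1 + o(1)$, so $1/\sqrt{\beta_{\min}} = O(1)$.

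The crucial hypothesis to verify is the spectral gap condition: we must exhibit $a > 0$ with $\Spec(Q^\gamma)\cap ((I+B(0,2a))\setminus I) = \emptyset$. Here the cluster structure established in the proof of Theorem~\ref{asexp} does all the work. Indeed, Proposition~\ref{asvp} shows that every eigenvalue of $Q^\gamma$ below $-\gamma^2$ lies in one of the disjoint clusters $I_1, \ldots, I_{K^\oplus}$, and these are centered at $\gamma^2\lambda_1 < \cdots < \gamma^2\lambda_{K^\oplus}$ with adjacent centers separated by $\gamma^2(\lambda_{j+1}-\lambda_j)$. In addition, by \eqref{minreg} the remaining spectrum of $Q^\gamma$ lies above $-\gamma^2 - C\gamma - c$, which is much larger than any $\gamma^2\lambda_j$ since $\lambda_j < -1$. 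Consequently, for $\gamma$ large enough one can take
\[
a \coloneqq \tfrac{1}{4}\gamma^2 \min\bigl\{\lambda_{l+1}-\lambda_l,\,\lambda_l - \lambda_{l-1}\bigr\}
\]
(with only one side appearing when $l = 1$ or $l = K^\oplus$, and with the bound from \eqref{minreg} on one side), and the gap condition is satisfied.

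Applying Theorem~\ref{THeigen} then gives
\[
\dist(\mathcal F^\gamma_l, \mathcal E^\gamma_l) \leq \frac{\sqrt{m_l}\,\eta}{a\,\sqrt{\beta_{\min}}} \leq \frac{C}{\gamma^2}\,e^{-(1-\epsilon)\gamma\sqrt{-1-E^{\max}}\rho},
\]
which is stronger than the stated bound; absorbing the $\gamma^{-2}$ factor into the constant yields Theorem~\ref{eigenspace}. I don't foresee a serious obstacle in this plan: the key technical ingredients (decay of the $\psi^{\gamma,v}_n$, quasi-mode estimates \eqref{qm1}--\eqref{qm2}, almost-orthogonality \eqref{pq1}, and the cluster structure from Proposition~\ref{asvp}) have all been assembled in the preceding sections. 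The only mild care needed is to check that the spectral gap $a$ one obtains from the cluster decomposition is much larger than $\eta$, which is clear since $a$ is of polynomial size $\gamma^2$ while $\eta$ decays exponentially in $\gamma$.
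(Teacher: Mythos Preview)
Your proposal is correct and follows essentially the same route as the paper: both apply Theorem~\ref{THeigen} with the quasi-modes $(\tilde\phi^{\gamma,v}_j)_{(j,v)\in\mathcal S_l}$ at the single value $\mu_j=\gamma^2\lambda_l$, using \eqref{qm2_1}/\eqref{qm2} for $\eta$, Lemma~\ref{li} and \eqref{closevp} for $\beta_{\min}$, and the cluster structure of Theorem~\ref{asexp} for the spectral gap. The only difference is cosmetic: the paper takes the gap parameter $a=\min\bigl(\tfrac{\lambda_{l+1}-\lambda_l}{4},\tfrac{\lambda_l-\lambda_{l-1}}{4}\bigr)$ as a $\gamma$-independent constant, whereas you choose $a\sim\gamma^2$, which yields an extra (harmless) factor $\gamma^{-2}$ in the bound before absorption into $C$.
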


\begin{proof}
By \eqref{qm2_1}, there exists $r_{n,v} \in L^2(\Omega)$ such that 
\[
Q^\gamma\tilde \phi^{\gamma,v}_n=\gamma^2\lambda_l\tilde  \phi^{\gamma,v}_n + r_{n,v},
\]
and $\lVert r_{n,v} \rVert \leq C e^{-(1-\epsilon)\gamma \sqrt{-1-E^{\max}}\rho}$, for any $(n,v)\in \mathcal S_l$. 
Let $a\coloneqq \min(\displaystyle \frac{\lambda_{l+1}-\lambda_l}{4},\displaystyle \frac{\lambda_l -\lambda_{l-1}}{4} )$.
By Theorem~\ref{asexp}, we know that if we denote by $\tilde I_l \coloneqq (\gamma^2\lambda_l - C e^{-(1-\epsilon)\gamma \sqrt{-1-E^{\max}}\rho}, \gamma^2\lambda_l + C e^{-(1-\epsilon)\gamma \sqrt{-1-E^{\max}}\rho})$, then $\Spec(Q^\gamma)\cap \left ( (\tilde I_l +B(0,2a)) \backslash \tilde I_l\right )=\emptyset$. Then by Theorem~\ref{THeigen},
\[
\dist(\mathcal F^\gamma_l,\mathcal E^\gamma_l)\leq \frac{\sqrt{m_l} C e^{-(1-\epsilon)\gamma \sqrt{-1-E^{\max}}\rho}}{a \sqrt{\beta_{\min}}}.
\]
As by \eqref{closevp}, $\lvert \beta_{\min}-1\rvert \leq C e^{-2(1-\epsilon)\gamma \sqrt{-1-E^{\max}}\rho}$, we can conclude the proof.
\end{proof}

\section{Robin Laplacian on curvilinear polygons}\label{curvpol}

In the following, $\Omega$ is a general curvilinear polygon in the sense of Definition~\ref{defcurvpol}. We still denote $Q^\gamma\coloneqq Q^\gamma_{\Omega}$.

In the next section, we introduce some test-functions which will play the role of the quasi-modes we used in the proofs for polygons with straight edges.

\subsection{Description and properties of weak quasi-modes}\label{descriptqmcurv}

Recall that $\psi^{\gamma,v}_n$ are the eigenfunctions of the Robin Laplacian acting on the infinite sector $U_v$ introduced in Section~\ref{modelop}.
For $v\in\mathcal V$ and $1\leq n\leq \mathcal N_v$ we set 
\[
\phi^{\gamma,v}_n\coloneqq \psi^{\gamma,v}_n \circ F_v,
\]
where $F_v$ is the $C^2$-diffeomorphism maping $\overline{B(v, r_v)\cap \Omega}$ onto $\overline{B(0,r_v)\cap U_v}$, see Section~\ref{sectdefcurvpol}. 
Then, $\phi^{\gamma,v}_n \in H^1(\Omega\cap B(v,r_v))$.
Let $\varphi \in C^\infty(\mathbb R_+)$ be a smooth cut-off function satisfying $0\leq \varphi\leq 1$, $\varphi(t)=1$ if $0\leq t\leq 1$, and $\varphi=0$ if $t\geq 2$. We introduce the smooth radial cut-off function $\chi^\gamma_v$ defined as follows:
\begin{align}
\label{chigamma}
\chi^\gamma_v(x)= \varphi(\lvert x-v\rvert \gamma^\beta), \quad x\in \Omega,
\end{align}
where $\beta \in (1/2,1)$ will be chosen later. Notice that for $\gamma$ large enough $\supp \chi^\gamma_v \subset B(v,r_v)$ and $\supp \chi^\gamma_v \cap \supp \chi^\gamma_{v'} =\emptyset$ for $v\neq v'$. In the following, $\gamma$ will be supposed large enough such that these conditions are satisfied.
We define
\[
\tilde \phi^{\gamma,v}_n\coloneqq \phi^{\gamma,v}_n \chi^\gamma_v\quad \text{on}\quad \Omega.
\]
Defined as above, $\tilde \phi^{\gamma,v}_n \in D(q^\gamma)\coloneqq H^1(\Omega)$ but $\tilde \phi^{\gamma,v}_n$ does not belong to the domain of the operator $Q^\gamma$ as it does not satisfy the Robin boundary condition: we call it a \emph{weak quasi-mode}. 

In order to list some properties of the weak quasi-modes we will need some additional results and notations.

\noindent\textbf{Notation.} 
\begin{itemize}
\item[(a)] If $D \subset \mathbb R^2$, we set $D_{y,r}\coloneqq D\cap B(y,r)$. 
\item[(b)] Let $g: \mathbb R^2\to \mathbb R^2$  be $C^1$. We denote by $Jg$ the determinant of the Jacobian matrix of $g$, namely
 \[
 Jg\coloneqq \det(\nabla g).
\]
\end{itemize}

\begin{lemma}\label{Lemmacov}
Let $v \in \mathcal V$, $\psi \in H^1(U_v)$ and $\phi\coloneqq \psi\circ F_v $. Then, $\phi\in H^1(\Omega_{v,2\gamma^{-\beta}})$ for $\gamma$ large enough and,
\begin{align}
\label{est1}
\left \lvert \int_{\Omega_{v,2\gamma^{-\beta}}} \lvert \phi(x)\rvert^2 dx - \int_{(U_v)_{0,2\gamma^{-\beta}}} \lvert \psi(u) \rvert^2 du\right \rvert \leq C\gamma^{-\beta} \int_{(U_v)_{0,2\gamma^{-\beta}}} \lvert \psi(u)\rvert^2 du,
\end{align}
\begin{align}
\label{est2}
\left \lvert \int_{\Omega_{v,2\gamma^{-\beta}}} \lvert \nabla \phi(x)\rvert^2 dx -\int_{(U_v)_{0,2\gamma^{-\beta}}} \lvert \nabla \psi(u)\rvert^2 du\right \rvert \leq C\gamma^{-\beta}  \int_{(U_v)_{0,2\gamma^{-\beta}}} \lvert \nabla \psi(u)\rvert^2 du, 
\end{align}
\begin{align}
\label{est3}
\left \lvert \int_{\Gamma_{v,2\gamma^{-\beta}}} \lvert \phi(s)\rvert^2 ds -\int_{\Sigma_{v,2\gamma^{-\beta}}}\lvert \psi(s)\rvert^2 ds \right \rvert \leq C \gamma^{-\beta} \int_{\Sigma_{v,2\gamma^{-\beta}}} \lvert \psi(s)\vert^2 ds,
\end{align}
where $\Gamma_{v,r}\coloneqq \partial \Omega_{v,r} \backslash \partial B(v,r)$ and 
$\Sigma_{v,r}\coloneqq\partial (U_v)_{0,r}\backslash \partial B(0,r)$.
\end{lemma}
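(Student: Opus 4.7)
The plan is a change-of-variables argument built on the normalization of $F_v$ at $v$. Since $F_v$ is a $C^2$-diffeomorphism with $F_v(v) = 0$ and $\nabla F_v(v) = I_2$, a first-order Taylor expansion at $v$ yields
\[
\|\nabla F_v(x) - I_2\| \leq C_v |x-v|, \quad |JF_v(x) - 1| \leq C_v |x-v|,
\]
uniformly on $\overline{\Omega \cap B(v, r_v)}$, with analogous bounds for $F_v^{-1}$ near the origin. For $\gamma$ large enough that $2\gamma^{-\beta} < r_v$, both factors are uniformly $O(\gamma^{-\beta})$ on $\Omega_{v, 2\gamma^{-\beta}}$ and $(U_v)_{0, 2\gamma^{-\beta}}$. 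The $H^1$ regularity claim is immediate from the chain rule: $\phi = \psi \circ F_v \in H^1(\Omega_{v, 2\gamma^{-\beta}})$ with $\nabla \phi(x) = (\nabla F_v(x))^T (\nabla \psi)(F_v(x))$.

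For \eqref{est1}, I would change variables $u = F_v(x)$ to obtain
\[
\int_{\Omega_{v, 2\gamma^{-\beta}}} |\phi(x)|^2\, dx = \int_{F_v(\Omega_{v, 2\gamma^{-\beta}})} |\psi(u)|^2 \, |JF_v^{-1}(u)|\, du.
\]
The Jacobian bound extracts a factor $1 + O(\gamma^{-\beta})$. The image $F_v(\Omega_{v, 2\gamma^{-\beta}})$ is sandwiched between $(U_v)_{0, r_-}$ and $(U_v)_{0, r_+}$ with $r_\pm = 2\gamma^{-\beta}(1 \pm C\gamma^{-\beta})$, so it differs from $(U_v)_{0, 2\gamma^{-\beta}}$ only in a thin annular shell near $\partial B(0, 2\gamma^{-\beta}) \cap U_v$ of radial thickness $O(\gamma^{-2\beta})$. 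Rescaling $u = 2\gamma^{-\beta} w$, we normalize to the fixed region $U_v \cap B(0, 1 + C\gamma^{-\beta})$ where the annular shell has relative thickness $O(\gamma^{-\beta})$; using scale invariance of $U_v$ together with this relative measure bound lets me absorb the domain-mismatch contribution into the $O(\gamma^{-\beta})$ multiplicative error on the right-hand side of \eqref{est1}.

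For \eqref{est2}, the chain rule combined with the Jacobian bound gives the pointwise estimate $|\nabla \phi(x)|^2 = (1 + O(\gamma^{-\beta})) |\nabla \psi(F_v(x))|^2$, after which the same change of variables and domain bookkeeping as for \eqref{est1} close the argument. For \eqref{est3}, I would parametrize $\Gamma_{v, 2\gamma^{-\beta}}$ by arc length and observe that the restriction of $F_v$ to $\partial \Omega \cap B(v, r_v)$ is a $C^2$ reparametrization of the matching arc of $\partial U_v$, with arc-length Jacobian $1 + O(\gamma^{-\beta})$; the endpoints of $\Gamma_{v, 2\gamma^{-\beta}}$ are mapped within $O(\gamma^{-2\beta})$ of those of $\Sigma_{v, 2\gamma^{-\beta}}$, and the same sandwich argument with a rescaling to unit scale concludes the estimate.

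The principal technical obstacle is precisely the domain mismatch: $F_v$ does not send Euclidean balls to Euclidean balls, so the integration sets on the two sides of each estimate do not coincide. The delicate point is showing that the symmetric-difference contribution --- a thin annular shell of relative measure $O(\gamma^{-\beta})$ --- is absorbed into the $C\gamma^{-\beta}$ multiplicative error. The condition $\beta < 1$ is essential here: it guarantees that the annular thickness $O(\gamma^{-2\beta})$ is genuinely smaller than the main length scale $\gamma^{-\beta}$ by an extra factor $\gamma^{-\beta}$, which is what makes the geometric correction subleading relative to the Jacobian correction and produces the uniform factor $C\gamma^{-\beta}$ in all three estimates.
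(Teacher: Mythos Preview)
Your Jacobian and Taylor estimates match the paper's approach, but the two proofs diverge on the domain-mismatch issue, and your handling of it contains a genuine gap.

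The paper writes the change of variables as an \emph{exact} identity,
\[
\int_{\Omega_{v,2\gamma^{-\beta}}} |\phi|^2\,dx \;=\; \int_{(U_v)_{0,2\gamma^{-\beta}}} |\psi|^2\,|JF_v^{-1}|\,du,
\]
and likewise for the gradient and boundary integrals. In other words, the paper is tacitly using a straightening diffeomorphism that preserves the radial coordinate, $|F_v(x)|=|x-v|$, so that $F_v(\Omega_{v,r})=(U_v)_{0,r}$ for every $r\le r_v$ (a standard polar-coordinate straightening of a corner achieves this while keeping $\nabla F_v(v)=I_2$). With that choice there is no domain mismatch at all, and each of \eqref{est1}--\eqref{est3} reduces immediately to the first-order Taylor bound on the relevant Jacobian.

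You instead allow a generic $F_v$ and try to absorb the mismatch by a relative-measure argument on the annular shell. That step does not work for general $\psi\in H^1(U_v)$: an annulus having area $O(\gamma^{-\beta})$ relative to the ball says nothing about $\int_{\text{annulus}}|\psi|^2$ relative to $\int_{\text{ball}}|\psi|^2$. If $\psi$ is concentrated near $|u|=2\gamma^{-\beta}$, the annular contribution can be of the same order as the full integral, not $O(\gamma^{-\beta})$ times it; your rescaling $u=2\gamma^{-\beta}w$ only relabels this obstruction. (In the paper's applications $\psi$ is always an eigenfunction with Agmon decay, so the annulus is in fact exponentially negligible --- but the lemma is stated for arbitrary $\psi$, and the clean fix is the radially-preserving choice of $F_v$, not a measure argument.) The same objection applies to your boundary version for \eqref{est3}. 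Your remark that $\beta<1$ is ``essential'' for the annular bookkeeping is also off: the lemma and its proof use only that $2\gamma^{-\beta}<r_v$ for large $\gamma$.
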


\begin{proof}
 We first want to estimate the $L^2$-norm of $\phi$. By change of variables,
\[
\int_{\Omega_{v,2\gamma^{-\beta}}} \lvert \phi(x)\rvert^2 dx = \int_{(U_v)_{0,2\gamma^{-\beta}}} \lvert \psi(u)\vert^2 \lvert JF^{-1}_v(u)\rvert du.
\]
As $F^{-1}_v$ is $C^2$, then $u\mapsto JF^{-1}_v(u)$ is also $C^2$ and by Taylor-Lagrange for all $u\in B(0,2\gamma^{-\beta})$ we have
\begin{align}
\label{TL1}
\lvert JF^{-1}(u)-1\rvert \leq C\gamma^{-\beta}.
\end{align}
Writing
\[
\left \lvert \int_{\Omega_{v,2\gamma^{-\beta}}} \lvert \phi(x)\rvert^2 dx - \int_{(U_v)_{0,2\gamma^{-\beta}}} \lvert \psi(u) \rvert^2 du\right \rvert \leq \int_{(U_v)_{0,2\gamma^{-\beta}}} \lvert \psi(u)\rvert^2 \left\lvert\lvert JF^{-1}_v\rvert -1\right\rvert du,
\]
finishes the proof of \eqref{est1}. 

We now estimate the $L^2$-norm of $\nabla \phi$. 
By definition of $\phi$ we have $\nabla \phi(x)= \nabla \psi(F_v(x))\nabla F_v(x)$. Then,
\[
\int_{\Omega_{v,2\gamma^{-\beta}}} \lvert \nabla \phi(x)\rvert^2 dx =\int_{(U_v)_{0,2\gamma^{-\beta}}} \lvert \nabla \psi(u) \nabla F_v(F^{-1}_v(u))\rvert^2 \lvert JF^{-1}_v(u)\rvert du.
\]
Using again Taylor-Lagrange, we know that for all $u\in B(0,2\gamma^{-\beta})$,
\begin{align}
\label{TL2}
\lvert \nabla F_v(F^{-1}_v(u))-I_2\rvert \leq C \gamma^{-\beta}.
\end{align}
We denote $I\coloneqq\Big \lvert \displaystyle \int_{\Omega_{v,2\gamma^{-\beta}}} \lvert \nabla \phi(x)\rvert^2 dx -\displaystyle\int_{(U_v)_{0,2\gamma^{-\beta}}} \lvert \nabla \psi(u)\rvert^2 du\Big  \rvert$. Then,
\begin{align*}
 I
\leq \Big \lvert \int_{(U_v)_{0,2\gamma^{-\beta}}} \Big ( \lvert \nabla \psi(u) \nabla F_v(F^{-1}_v(u))\rvert^2 -&\lvert \nabla \psi(u)\rvert^2 \Big ) \lvert JF^{-1}(u)\rvert du \Big \lvert \\
& +\left \lvert \int_{(U_v)_{0,2\gamma^{-\beta}}} \lvert \nabla \psi(u)\rvert^2 (\lvert JF_v^{-1}(u)\rvert-1) du \right \rvert.
\end{align*}
First we have using \eqref{TL2}, 
\begin{align*}
\left \lvert \lvert \nabla \psi(u) \nabla F_v(F^{-1}_v(u))\rvert^2-\lvert \nabla \psi(u)\rvert^2 \right \rvert \leq C \gamma^{-\beta}  \lvert \nabla \psi(u)\rvert^2, 
\end{align*}
and using \eqref{TL1} we also get 
\begin{align*}
\left \lvert \lvert \nabla \psi(u) \nabla F_v(F^{-1}_v(u))\rvert^2-\lvert \nabla \psi(u)\rvert^2 \right \rvert \lvert JF^{-1}(u)\rvert \leq  C\gamma^{-\beta} \lvert \nabla \psi(u)\rvert^2,
\end{align*}
which gives us the upper bound for the first term of $I$.
We can use again \eqref{TL1} for the second term and we get \eqref{est2}.

 We are now interested in the integral along the boundary. Recall that by assumption $\partial\Omega=\bigcup_{k=1}^M \overline{\Gamma_k}$. Without loss of generality, we suppose that two components $\Gamma_j$, $\Gamma_k$ intersect iff $k=j+1$ or $k=j-1$. Then, there exists $j\in\{1,...,M\}$ such that $v=\overline\Gamma_j\cap \overline \Gamma_{j+1}$
 and $\Gamma_{v,2\gamma^{-\beta}}$ is composed by two connected $C^4$ components
\begin{align*}
\Gamma^{j}_{v,2\gamma^{-\beta}}\coloneqq\Gamma_j\cap B(v,2\gamma^{-\beta}), \text{ and }
\Gamma^{j+1}_{v,2\gamma^{-\beta}}\coloneqq\Gamma_{j+1}\cap B(v,2\gamma^{-\beta}),
\end{align*}
such that $ \overline{\Gamma^{j}_{v,2\gamma^{-\beta}}}\cup \overline{\Gamma^{j+1}_{v,2\gamma^{-\beta}}}=\overline{\Gamma_{v,2\gamma^{-\beta}}}$. We can introduce 
\begin{align*}
\overline{\Sigma^j_{v,2\gamma^{-\beta}}}\coloneqq F_v(\overline{\Gamma^j_{v,2\gamma^{-\beta}}}), \text{ and }
\overline{\Sigma^{j+1}_{v,2\gamma^{-\beta}}}\coloneqq F_v(\overline{\Gamma^{j+1}_{v,2\gamma^{-\beta}}}),
\end{align*}
such that $\overline{\Sigma_{v,2\gamma^{-\beta}}}=\overline{\Sigma^j_{v,2\gamma^{-\beta}}}\cup \overline{\Sigma^{j+1}_{v,2\gamma^{-\beta}}}$. Notice that $\Sigma^l_{v,2\gamma^{-\beta}}$, $l=j, j+1$, is simply $\{t\in(0,2\gamma^{-\beta}\cos(\alpha_v)), (t,\pm \tan\alpha_v t)\}$. Thus,
\begin{align*}
\int_{\Gamma_{v,2\gamma^{-\beta}}} \lvert \phi(s)\rvert^2 ds =\sum_{l=j,j+1} \int_{\Gamma^l_{v,2\gamma^{-\beta}}} \lvert \phi(s)\rvert ^2 ds 
= \sum_{l=j,j+1} \int_0^{2\gamma^{-\beta}\cos(\alpha_v)} \lvert \psi(t,\pm\tan\alpha_vt)\rvert^2 \rho^l(t) dt,
\end{align*}
where 
\begin{align*}
\rho^l(t)\coloneqq\displaystyle\sqrt{\det \left((\displaystyle\frac{d}{dt}(F^{-1}_v(t,\pm \tan\alpha_v t)))^T(\displaystyle\frac{d}{dt}(F^{-1}_v(t,\pm \tan\alpha_v t)))\right)}=\left \lvert \nabla F^{-1}_v(t,\pm \tan\alpha_v t) T^l \right \rvert,
\end{align*}
with $T^l= \begin{pmatrix} 1 \\ \pm \tan \alpha_v \end{pmatrix}$. Finally we can write 
\begin{align*}
\int_{\Gamma_{v,2\gamma^{-\beta}}} \lvert \phi(s)\rvert^2 ds &=\sum_{l=j,j+1} \int_{\Sigma^l_{v,2\gamma^{-\beta}}} \lvert \psi(s)\rvert^2 \lvert \nabla F^{-1}_v (s) T^l \rvert \cos(\alpha_v) ds\\
& =\int_{\Sigma_{v,2\gamma^{-\beta}}} \lvert \psi(s)\rvert ^2 \lvert \nabla F^{-1}_v (s) T(s) \rvert \cos(\alpha_v) ds,
\end{align*}
where $T(s)=T^l$, as $s \in \Sigma^l_{v,2\gamma^{-\beta}}$, $l=j,j+1$. 
As $t\mapsto \nabla F^{-1}_v(t,\pm\tan\alpha_v t)$ is $C^1$ on $(0,2\gamma^{-\beta}\cos(\alpha_v))$, by Taylor-Lagrange, for all $t\in (0,2\gamma^{-\beta}\cos(\alpha_v))$,
\begin{align}
\label{TL3}
\lvert \nabla F^{-1}_v(t,\pm\tan\alpha_v t)-I_2\rvert \leq C\gamma^{-\beta}.
\end{align}
Then, $\lvert \rho^l(t)-\displaystyle \frac{1}{\cos\alpha_v}\vert \leq \displaystyle\frac{C\gamma^{-\beta}}{\cos\alpha_v}$, as $\lvert T^l\rvert=\cos^{-1}\alpha_v$. Finally,
\[
\left \lvert \int_{\Gamma_{v,2\gamma^{-\beta}}}\lvert \phi(s)\rvert^2 ds -\int_{\Sigma_{v,2\gamma^{-\beta}}} \lvert \psi(s)\rvert2 ds\right\rvert \leq \int_{\Sigma_{v,2\gamma^{-\beta}}} \lvert \psi(s)\rvert^2 \left \lvert \lvert \nabla F_v^{-1}(s)T(s)\rvert \cos(\alpha_v)-1\right \rvert ds,
\]
which concludes the proof of \eqref{est3}.
\end{proof}

We can now summarize some properties of the weak quasi-modes. The ideas are the same as the ones for polygons with straight edges and the following results are based on a decay property of eigenfunctions of the Robin Laplacian defined on infinite sectors recalled in Theorem~\ref{Agmon}.

\begin{proposition}\label{proppropqm}
Let $\epsilon \in(0,1)$ and $\beta\in(\displaystyle\frac{1}{2},1)$. For all $i,j\in\{1,...,\mathcal N_v\}$ and for $\gamma$ large enough we have 

\begin{align}
\label{prop1bis}
\lvert \langle \tilde\phi^{\gamma,v}_i,\tilde\phi^{\gamma,v}_j\rangle_{L^2(\Omega)}-\delta_{i,j}\rvert &\leq C \gamma^{-\beta}, \\
\label{prop2}
\lvert q^\gamma(\tilde \phi^{\gamma,v}_i,\tilde \phi^{\gamma,v}_j)- \delta_{i,j} \gamma^2 E_i(T_v)\rvert &\leq C\gamma^{2-\beta}.
\end{align}
\end{proposition}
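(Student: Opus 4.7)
The plan is to reduce both identities to exact computations on the infinite sector $U_v$, using two ingredients: the change-of-variables estimates of Lemma~\ref{Lemmacov} on $B(v,2\gamma^{-\beta})$, and the exponential decay of the eigenfunctions $\psi^{\gamma,v}_n$ given by Theorem~\ref{Agmon}. Note that the Agmon weight reads $e^{2(1-\epsilon)\gamma\sqrt{-1-E_n(T_v)}\lvert x\rvert}$ so that, outside $B(0,\gamma^{-\beta})$, any integral of $\lvert\psi^{\gamma,v}_n\rvert^2$ or $\lvert\nabla\psi^{\gamma,v}_n\rvert^2$ is of order $e^{-c\gamma^{1-\beta}}$ with $c>0$; since $\beta<1$, this is smaller than any power of $\gamma$ and will always be absorbed in the error terms.

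First I would prove \eqref{prop1bis}. Decomposing
\[
\langle\tilde\phi^{\gamma,v}_i,\tilde\phi^{\gamma,v}_j\rangle_{L^2(\Omega)}=\int_{\Omega_{v,2\gamma^{-\beta}}}\phi^{\gamma,v}_i\overline{\phi^{\gamma,v}_j}\,dx+\int_{\Omega_{v,2\gamma^{-\beta}}}(\lvert\chi^\gamma_v\rvert^2-1)\phi^{\gamma,v}_i\overline{\phi^{\gamma,v}_j}\,dx,
\]
I would bound the second integral via Cauchy--Schwarz and Theorem~\ref{Agmon} (its integrand vanishes on $B(v,\gamma^{-\beta})$). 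For the first, the polarized form of \eqref{est1} gives
\[
\int_{\Omega_{v,2\gamma^{-\beta}}}\phi^{\gamma,v}_i\overline{\phi^{\gamma,v}_j}\,dx=\int_{(U_v)_{0,2\gamma^{-\beta}}}\psi^{\gamma,v}_i\overline{\psi^{\gamma,v}_j}\,du+O(\gamma^{-\beta}),
\]
after which extending the sector integral to all of $U_v$ (the extra tail is again Agmon-small) and using the orthonormality $\langle\psi^{\gamma,v}_i,\psi^{\gamma,v}_j\rangle_{L^2(U_v)}=\delta_{i,j}$ yields \eqref{prop1bis}.

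For \eqref{prop2}, I would expand
\[
q^\gamma(\tilde\phi^{\gamma,v}_i,\tilde\phi^{\gamma,v}_j)=\int_{\Omega}\lvert\chi^\gamma_v\rvert^2\nabla\phi^{\gamma,v}_i\cdot\overline{\nabla\phi^{\gamma,v}_j}\,dx-\gamma\int_{\partial\Omega}\lvert\chi^\gamma_v\rvert^2\phi^{\gamma,v}_i\overline{\phi^{\gamma,v}_j}\,ds+J(\gamma),
\]
where $J(\gamma)$ collects all terms involving $\nabla\chi^\gamma_v$. Because $\lVert\nabla\chi^\gamma_v\rVert_\infty=O(\gamma^\beta)$ and $\nabla\chi^\gamma_v$ is supported in the annulus $B(v,2\gamma^{-\beta})\setminus B(v,\gamma^{-\beta})$ where the eigenfunctions are Agmon-small, Cauchy--Schwarz shows that $J(\gamma)$ is exponentially small in $\gamma^{1-\beta}$ and hence negligible. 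The bulk and boundary terms are transferred to the sector via the polarized versions of \eqref{est2} and \eqref{est3}, then extended to all of $U_v$ (resp.\ $\partial U_v$) at the cost of another Agmon-small tail, and finally combined through the spectral identity
\[
\int_{U_v}\nabla\psi^{\gamma,v}_i\cdot\overline{\nabla\psi^{\gamma,v}_j}\,du-\gamma\int_{\partial U_v}\psi^{\gamma,v}_i\overline{\psi^{\gamma,v}_j}\,ds=t^{\gamma,\alpha_v}(\psi^{\gamma,v}_i,\psi^{\gamma,v}_j)=\delta_{i,j}\gamma^2E_i(T_v).
\]
By the scaling identities stated just after \eqref{eigenFsector}, both $\lVert\nabla\psi^{\gamma,v}_n\rVert_{L^2(U_v)}^2$ and $\gamma\lVert\psi^{\gamma,v}_n\rVert_{L^2(\partial U_v)}^2$ are of order $\gamma^2$, so applying the multiplicative error $O(\gamma^{-\beta})$ from Lemma~\ref{Lemmacov} to these quantities produces exactly the announced $O(\gamma^{2-\beta})$ remainder.

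The only point requiring slight care is the passage from the quadratic estimates of Lemma~\ref{Lemmacov} to sesquilinear ones. This is a routine polarization: each of the estimates \eqref{est1}--\eqref{est3} is of the form $\lvert N_\Omega(f)-N_{U_v}(f)\rvert\leq C\gamma^{-\beta}N_{U_v}(f)$ for a quadratic functional $N$, so by the identity $4\langle f,g\rangle=\lVert f+g\rVert^2-\lVert f-g\rVert^2+i\lVert f+ig\rVert^2-i\lVert f-ig\rVert^2$ the associated sesquilinear forms obey the same relative estimate up to a universal constant. Once this is in place, the whole argument is bookkeeping; the choice $\beta\in(1/2,1)$ merely ensures that the polynomial errors $O(\gamma^{-\beta})$, $O(\gamma^{2-\beta})$ coming from Lemma~\ref{Lemmacov} dominate all exponentially small Agmon tails of order $e^{-c\gamma^{1-\beta}}$.
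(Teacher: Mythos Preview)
Your proposal is correct and follows essentially the same route as the paper: transfer to the sector via the Jacobian estimates of Lemma~\ref{Lemmacov}, control all annulus-supported terms (those carrying $\nabla\chi_v^\gamma$ or $|\chi_v^\gamma|^2-1$) by the Agmon bound of Theorem~\ref{Agmon}, and close with the spectral identity $t^{\gamma,\alpha_v}(\psi^{\gamma,v}_i,\psi^{\gamma,v}_j)=\delta_{ij}\gamma^2E_i(T_v)$. The only organizational difference is that the paper treats the diagonal and off-diagonal cases separately, absorbing $\chi_v^\gamma$ directly into refined Taylor estimates such as $\big|(\chi_v^\gamma\circ F_v^{-1})^2\,JF_v^{-1}-1\big|\le C\gamma^{-\beta}$, whereas you handle both cases at once by first splitting off the $(|\chi_v^\gamma|^2-1)$ piece and then polarizing the quadratic estimates \eqref{est1}--\eqref{est3}; both devices yield the same $O(\gamma^{-\beta})$ and $O(\gamma^{2-\beta})$ remainders.
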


\begin{proof}
Let us first estimate the $L^2$-norm of $\tilde \phi^{\gamma,v}_i$. We have immediately
\begin{align*}
\lVert \tilde \phi^{\gamma,v}_i\rVert^2_{L^2(\Omega)} =\int_{\Omega_{v,2\gamma^{-\beta}}} \lvert \chi^\gamma_v\rvert^2 \lvert \phi^{\gamma,v}_i\rvert^2 dx 
\leq \int_{(U_v)_{0,2\gamma^{-\beta}}} \lvert \psi_i^{\gamma,v}\rvert^2 \lvert JF^{-1}_v\rvert du.
\end{align*}
We conclude thanks to \eqref{TL1}.
For the lower bound we remark that
\begin{align*}
\lVert \tilde \phi^{\gamma,v}_i\rVert^2_{L^2(\Omega)} \geq \int_{\Omega_{v,\gamma^{-\beta}}} \lvert \phi^{\gamma,v}_i\rvert^2 dx 
=\int_{(U_v)_{0,\gamma^{-\beta}}} \lvert \psi^{\gamma,v}_i\rvert^2 \lvert JF_v^{-1} \rvert du.
\end{align*}
We can use again \eqref{TL1} to obtain
\[
\lVert \tilde \phi^{\gamma,v}_i\rVert^2_{L^2(\Omega)} \geq \int_{(U_v)_{0,\gamma^{-\beta}}} \lvert \psi^{\gamma,v}_i\rvert^2 du -C\gamma^{-\beta}.
\]
Writing 
\[
\displaystyle \int_{(U_v)_{0,\gamma^{-\beta}}} \lvert \psi^{\gamma,v}_i \rvert^2 du = \int_{U_v} \lvert \psi ^{\gamma,v}_i \rvert^2 du -\int_{U_v\backslash B(0,\gamma^{-\beta})} \lvert \psi^{\gamma,v}_i \rvert^2 du,
\]
and using the estimate of Theorem~\ref{Agmon} to get a lower bound for the second term permits us to conclude the proof of \eqref{prop1bis} when $i=j$.

Now, if $i\neq j$,
\[
\left \lvert \langle \tilde\phi^{\gamma,v}_i,\tilde\phi^{\gamma,v}_j\rangle -\int_{(U_v)_{0,2\gamma^{-\beta}}} \psi^{\gamma,v}_i\overline{\psi^{\gamma,v}_j}du\right\rvert =
\left \lvert \int_{(U_v)_{0,2\gamma^{-\beta}}}  \psi^{\gamma,v}_i \overline{\psi^{\gamma,v}_j} \left(( \chi^\gamma_v\circ F^{-1}_v(u))^2\lvert JF^{-1}_v(u)\rvert-1\right ) du \right \rvert.
\]
By Taylor-Lagrange, for all $u\in B(0,2\gamma^{-\beta})$,
\begin{align}
\label{TL4}
\left\lvert (\chi^\gamma_v\circ F^{-1}_v(u))^2 JF^{-1}_v(u) -1\right \rvert \leq C \gamma^{-\beta}.
\end{align}
Then we get, using the Cauchy-Schwarz inequality,
\begin{align*}
\left \lvert \langle \tilde\phi^{\gamma,v}_i,\tilde\phi^{\gamma,v}_j\rangle-\int_{(U_v)_{0,2\gamma^{-\beta}}}  \psi^{\gamma,v}_i \overline{\psi^{\gamma,v}_j} du \right \rvert 
\leq C \gamma^{-\beta}.
\end{align*}
We have now to estimate $\displaystyle \int_{(U_v)_{0,2\gamma^{-\beta}}}  \psi^{\gamma,v}_i \overline{\psi^{\gamma,v}_j} du $ more precisely. As $(\psi^{\gamma,v}_n)_{n\leq N_v}$ is orthonormal we can write 
\[ \int_{(U_v)_{0,2\gamma^{-\beta}}}  \psi^{\gamma,v}_i \overline{\psi^{\gamma,v}_j} du = -\int_{U_v\backslash B(0,2\gamma^{-\beta})}  \psi^{\gamma,v}_i \overline{\psi^{\gamma,v}_j}du.
\]
We then obtain, using again Cauchy-Schwarz and Theorem~\ref{Agmon},
\begin{align*}
\left \lvert \int_{(U_v)_{0,2\gamma^{-\beta}}}  \psi^{\gamma,v}_i \overline{\psi^{\gamma,v}_j} du\right \rvert 
\leq \int_{U_v \backslash B(0,\gamma^{-\beta})} \lvert \psi^{\gamma,v}_i \psi^{\gamma,v}_j\rvert du
\leq C e^{-2(1-\epsilon) \gamma^{1-\beta }\sqrt{-1-E^{\max}}},
\end{align*}
which ends the proof of \eqref{prop1bis}.

Let us focus of \eqref{prop2}. We first expand $q^\gamma(\tilde\phi^{\gamma,v}_i,\tilde \phi^{\gamma,v}_i )$:
\begin{multline*}
q^\gamma(\tilde\phi^{\gamma,v}_i,\tilde \phi^{\gamma,v}_i )=\int_{\Omega} \lvert \chi^\gamma_v\rvert^2 \lvert \nabla \phi^{\gamma,v}_i \rvert^2 dx -\gamma \int_{\partial \Omega} \lvert \chi^\gamma_v\rvert^2 \lvert \phi^{\gamma,v}_i\rvert^2 ds \\
+ \int_{\Omega} \lvert \nabla \chi^\gamma_v\rvert^2 \lvert \phi^{\gamma,v}_i \rvert^2 dx + 2\Re \int_{\Omega} \chi^\gamma_v \nabla \phi^{\gamma,v}_i.\overline{\nabla \chi^\gamma_v} \overline{\phi^{\gamma,v}_i}dx.
\end{multline*}
Notice that $\supp \nabla \chi^\gamma_v \subset A_v\coloneqq B(v,2\gamma^{-\beta})\backslash B(v,\gamma^{-\beta})$ and by definition of $\chi_v^\gamma$, $\lvert \nabla \chi^\gamma_v \rvert^2 \leq \gamma^{2\beta} \lVert \varphi' \rVert^2_\infty$. Then using \eqref{TL1}  we can write
\begin{align*}
\left \lvert \int_{\Omega} \lvert \nabla \chi^\gamma_v \rvert^2 \lvert \phi^{\gamma,v}_i\rvert^2 dx \right \rvert 
\leq \gamma^{2\beta} (1+C\gamma^{-\beta}) \lVert \varphi'\rVert^2_\infty \int_{U_v \backslash B(0,\gamma^{-\beta})} \lvert \psi^{\gamma,v}_i\rvert^2 du.
\end{align*}
We use Theorem~\ref{Agmon} to obtain
\begin{align}
\label{cross1}
\left \lvert \int_{\Omega} \lvert \nabla \chi^\gamma_v \rvert^2 \lvert \phi^{\gamma,v}_i\rvert^2 dx \right \rvert
\leq \gamma^{2\beta} (1+C \gamma^{-\beta}) \lVert\varphi'\rVert^2_\infty C e^{-2(1-\epsilon) \gamma^{1-\beta}\sqrt{-1-E^{\max}}}.
\end{align}
We can obtain the same kind of upper bound for the cross-term using Cauchy-Schwarz, 
\begin{align*}
\left \lvert \int_{\Omega} \chi^\gamma_v \nabla \phi^{\gamma,v}_i . \overline{\nabla \chi^\gamma_v}\overline {\phi^{\gamma,v}_i}dx \right \rvert 
&\leq \left(\int_{\Omega\cap A_v} \lvert \chi^{\gamma}_v\rvert^2 \lvert \nabla \phi^{\gamma,v}_i\rvert^2 dx \right)^{\frac{1}{2}} \left( \int_{\Omega\cap A_v} \lvert \nabla \chi^{\gamma}_v\rvert^2 \lvert \phi^{\gamma,v}_i\rvert^2 dx \right)^{\frac{1}{2}}, 
\end{align*}
the estimates \eqref{TL1}, \eqref{est2} and Theorem~\ref{Agmon},
\begin{align}
\label{cross2}
\left \lvert \int_{\Omega} \chi^\gamma_v \nabla \phi^{\gamma,v}_i . \overline{\nabla \chi^\gamma_v}\overline {\phi^{\gamma,v}_i}dx \right \rvert
\leq
\gamma^{\beta}\lVert \varphi'\Vert_\infty (1+C\gamma^{-\beta}) C e^{-2(1-\epsilon) \gamma^{1-\beta}\sqrt{-1-E^{\max}}}.
\end{align}
Combining \eqref{cross1} and \eqref{cross2} we get 
\begin{align}
\label{cross3}
\left \lvert \int_{\Omega} \lvert \nabla \chi^\gamma_v\rvert^2 \lvert \phi^{\gamma,v}_i \rvert^2 dx 
+ 2\Re \int_{\Omega} \chi^\gamma_v \nabla \phi^{\gamma,v}_i.\overline{\nabla \chi^\gamma_v} \overline{\phi^{\gamma,v}_i}dx \right \rvert 
\leq \gamma^{2\beta} C e^{-2(1-\epsilon)\gamma^{1-\beta} \sqrt{-1-E^{\max}}}.
\end{align}
Let us now focus on the main term. First, using \eqref{est2} we obtain 
\[
\int_{\Omega}\lvert \chi^\gamma_v\rvert^2 \lvert \nabla \phi^{\gamma,v}_i\rvert^2 dx \leq \int_{U_v} \lvert \nabla \psi^{\gamma,v}_i\rvert^2 du + C\gamma^{2-\beta} \lVert \nabla \psi^{1,v}_i\rVert^2_{L^2(U_v)},
\]
and
\begin{multline*}
\int_{\Omega}\lvert \chi^\gamma_v\rvert^2 \lvert \nabla \phi^{\gamma,v}_i\rvert^2 dx 
\geq \int_{U_v} \lvert \nabla \psi^{\gamma,v}_i\rvert^2 du 
-C\gamma^{2-\beta} \lVert \nabla\psi^{1,v}_i\rVert^2_{L^2(U_v)} \\
- (1-C\gamma^{-\beta})C e^{-2(1-\epsilon) \gamma^{1-\beta}\sqrt{-1-E^{\max}}}.
\end{multline*}
 For the boundary term, we use \eqref{est3} to obtain 
\[
\int_{\partial \Omega} \lvert \chi^\gamma_v\rvert^2 \lvert \phi^{\gamma,v}_i\rvert^2 ds \leq  \int_{\partial U_v} \lvert \psi ^{\gamma,v}_i \rvert^2 ds + C\gamma^{1-\beta} \int_{\partial U_v} \lvert \psi^{1,v}_i\rvert^2 ds,
\]
and,
\begin{align*}
\int_{\partial\Omega} \lvert \chi^\gamma_v\rvert^2 \lvert \phi^{\gamma,v}_i\rvert^2 ds 
\geq \int_{\partial U_v } \lvert\psi^{\gamma,v}_i\rvert^2 ds
 -C\gamma^{1-\beta} \int_{\partial U_v}\lvert \psi^{1,v}_i\rvert^2 ds 
-(1-C\gamma^{-\beta}) \int_{\partial U_v\backslash \Sigma_{v,\gamma^{-\beta}}} \lvert \psi^{\gamma,v}_i\rvert^2 ds.
\end{align*}
As  $U_v \backslash B(v,\gamma^{-\beta})$ is a Lipschitz domain, there exists a constant $K$ such that
\[
\lVert  \psi^{\gamma,v}_i\rVert^2_{L^2(\partial U_v\backslash \Sigma_{v,\gamma^{-\beta}} )} \leq \gamma^{\beta}K \lVert  \psi^{\gamma,v}_i \rVert^2_{H^1(U_v\backslash B(v,\gamma^{-\beta}))},
\] 
for all $v\in\mathcal V$ and $i\leq \mathcal N_v$.
Then we can use Theorem~\ref{Agmon} to get
\begin{multline*}
\int_{\partial\Omega} \lvert \chi^\gamma_v\rvert^2 \lvert \phi^{\gamma,v}_i\rvert^2 ds 
\geq \int_{\partial U_v } \lvert\psi^{\gamma,v}_i\rvert^2 ds 
-C\gamma^{1-\beta} \int_{\partial U_v}\lvert \psi^{1,v}_i\rvert^2 ds 
-C \gamma^{\beta} e^{-2(1-\epsilon)\gamma^{1-\beta}\sqrt{-1-E^{\max}}}.
\end{multline*}
This concludes the proof of \eqref{prop2} when $i=j$ as $t^{\gamma,\alpha_v}(\psi^{\gamma,v}_i,\psi^{\gamma,v}_i ) =\gamma^2 E_i(T_v)$. 

Let  $i\neq j$. We can write 
\[
q^\gamma(\tilde \phi^{\gamma,v}_i,\tilde \phi^{\gamma,v}_j) = \int_{\Omega} \lvert \chi^\gamma_v \rvert^2  \nabla \phi^{\gamma,v}_i \overline{\nabla \phi^{\gamma,v}_j} dx -\gamma \int_{\partial \Omega} \lvert \chi^{\gamma}_v \rvert^2 \phi^{\gamma,v}_i \overline{\phi^{\gamma,v}_j} ds + I(\gamma),
\]
where
\[
I(\gamma)\coloneqq \int_{\Omega} \left ( \chi^\gamma_v \nabla \phi^{\gamma,v}_i \overline{\nabla \chi^\gamma_v \phi^{\gamma,v}_j}dx +\int_{\Omega} \phi^{\gamma,v}_i \nabla \chi^{\gamma}_v \overline{\nabla \phi^{\gamma,v}_j \chi^\gamma_v}+ \int_{\Omega} \lvert \nabla \chi^\gamma_v\rvert^2 \phi^{\gamma,v}_i \overline{\phi^{\gamma,v}_j}\right )dx
\]
We first estimate $I(\gamma)$ using the same tools as before. We have
\begin{align*}
\left \lvert \int_{\Omega} \lvert \nabla \chi^\gamma_v\rvert^2 \phi^{\gamma,v}_i \overline{\phi^{\gamma,v}_j} dx  \right \rvert 
\leq \gamma^{2\beta} \lVert \varphi'\rVert^2_\infty (1+C\gamma^{-\beta}) C e^{-2(1-\epsilon) \gamma^{1-\beta} \sqrt{-1-E^{\max}}}.
\end{align*}
Notice that the other terms in the brackets are symmetric with respect to $i$ and $j$, it is then sufficient to estimate one of them
\begin{align*}
\left \lvert  \int_{\Omega} \chi^{\gamma}_v \nabla \phi^{\gamma,v}_i \overline{\nabla \chi^{\gamma}_v \phi^{\gamma,v}_j} dx \right \rvert 
\leq \gamma^{\beta} \lVert \varphi'\rVert_\infty \left( (1+C\gamma^{-\beta})   
(1+ C\gamma^{-\beta})\right)^{\frac{1}{2}} C e^{-2(1-\epsilon) 
\gamma^{1-\beta} \sqrt{-1-E^{\max}}}.
\end{align*}
Then,
\begin{align}
\label{crossbis}
\lvert I(\gamma)\rvert \leq \gamma^{2\beta} C e^{-2(1-\epsilon) 
\gamma^{1-\beta} \sqrt{-1-E^{\max}}}.
\end{align}
Let us now focus on the main term. By Taylor-Lagrange, for all $u\in B(0,2\gamma^{-\beta})$ and for all $t\in (0,2\gamma^{-\beta})$ we have 
\begin{align}
\label{TL5}
\left \lvert \right (\chi^\gamma_v\circ F^{-1}_v(u))^2 JF^{-1}_v(u) \nabla F_v(F^{-1}_v(u)) -I_2 \rvert &\leq C \gamma^{-\beta},\\
\label{TL6}
\left \lvert \right (\chi^\gamma_v\circ F^{-1}_v(t,\pm \tan\alpha_v t))^2 \nabla F^{-1}_v(t,\pm \tan\alpha_v t) -I_2\rvert &\leq C \gamma^{-\beta}.
\end{align}
Then by \eqref{TL5}, \eqref{TL6} and Theorem~\ref{Agmon} we get 
\[
\left\lvert \int_{\Omega} \lvert \chi^\gamma_v\rvert^2 \nabla \phi^{\gamma,v}_i \overline{\nabla \phi^{\gamma,v}_j} dx -\int_{(U_v)_{0,2\gamma^{-\beta}}} \nabla \psi^{\gamma,v}_i \overline{\nabla \psi^{\gamma,v}_j} dx  \right \rvert
\leq
C\gamma^{2-\beta} + C e^{-2(1-\epsilon)\gamma^{1-\beta} \sqrt{-1-E^{\max}}},
\]
and
\[
\left \lvert \int_{\partial \Omega} \lvert \chi^{\gamma}_v \rvert^2 \phi^{\gamma,v}_i \overline{\phi^{\gamma,v}_j} ds - \int_{\Sigma_{v,2\gamma^{-\beta}}} \psi^{\gamma,v}_i \overline{\psi^{\gamma,v}_j} ds \right \rvert
\leq
C\gamma^{2-\beta} + C e^{-2(1-\epsilon)\gamma^{1-\beta} \sqrt{-1-E^{\max}}}.
\]
As $i\neq j$, by the spectral theorem $t^{\gamma,\alpha_v}(\psi^{\gamma,v}_i,\psi^{\gamma,v}_j)=0$. Then,
\begin{multline*}
\left\lvert \int_{\Omega} \lvert \chi^\gamma_v \rvert^2  \nabla \phi^{\gamma,v}_i \overline{\nabla \phi^{\gamma,v}_j} dx -\gamma \int_{\partial \Omega} \lvert \chi^{\gamma}_v \rvert^2 \phi^{\gamma,v}_i \overline{\phi^{\gamma,v}_j} ds \right \rvert
\leq
C\gamma^{2-\beta} + C e^{-2(1-\epsilon)\gamma^{1-\beta} \sqrt{-1-E^{\max}}},
\end{multline*}
which concludes the proof using \eqref{crossbis}.
\end{proof}

\begin{lemma}\label{licurv}
For $\gamma$ large enough the family $(\tilde\phi^{\gamma,v}_n)_{(n,v)\in \cup_{l=1}^{K^\oplus} \mathcal S_l}$ is linearly independent.
\end{lemma}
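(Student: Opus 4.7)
The plan is to follow exactly the same strategy as in the proof of Lemma~\ref{li} for the straight-edge case, showing that the Gramian matrix of the family becomes arbitrarily close to the identity as $\gamma \to +\infty$, hence invertible for $\gamma$ large enough.

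More precisely, denote by $G$ the Gramian matrix with entries
\[
G_{(i,v),(j,w)} \coloneqq \langle \tilde\phi^{\gamma,v}_i, \tilde\phi^{\gamma,w}_j\rangle_{L^2(\Omega)}, \qquad (i,v),(j,w)\in \bigcup_{l=1}^{K^\oplus}\mathcal S_l.
\]
I would split the entries into two cases. First, if $v\neq w$, then by construction of the cut-off functions $\chi^\gamma_v$ in \eqref{chigamma} (namely the support disjointness $\supp \chi^\gamma_v \cap \supp \chi^\gamma_w = \emptyset$ for $v\neq w$ once $\gamma$ is large enough), the two weak quasi-modes have disjoint supports and hence $G_{(i,v),(j,w)}=0$. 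Second, if $v=w$, estimate \eqref{prop1bis} of Proposition~\ref{proppropqm} yields $|G_{(i,v),(j,v)} - \delta_{i,j}|\leq C\gamma^{-\beta}$.

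Thus $G$ is block-diagonal (one block per convex vertex $v\in\mathcal V$), and each block differs from the identity matrix of size $\mathcal N_v$ by a matrix of operator norm bounded by a constant times $\gamma^{-\beta}$. In particular $\det G = 1+O(\gamma^{-\beta})$ as $\gamma\to +\infty$, and so $\det G\neq 0$ for $\gamma$ sufficiently large, which is exactly the linear independence of the family.

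There is essentially no obstacle here; the lemma is a direct consequence of Proposition~\ref{proppropqm} combined with the disjoint-support property of the cut-offs. The only thing to keep track of is that the total number of quasi-modes, $\mathcal N^\oplus = \sum_{v\in\mathcal V}\mathcal N_v$, is finite and independent of $\gamma$, so the constant absorbed in $O(\gamma^{-\beta})$ is uniform and the argument produces a single threshold $\gamma_0$ beyond which the Gramian is invertible.
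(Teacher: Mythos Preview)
Your proof is correct and follows essentially the same approach as the paper: compute the Gramian, use the disjoint supports of the $\chi^\gamma_v$ for distinct vertices to get zero off-block entries, and use \eqref{prop1bis} to show each diagonal block is $I+O(\gamma^{-\beta})$, whence $\det G = 1+O(\gamma^{-\beta})\neq 0$ for large $\gamma$. The block-diagonal observation you make explicit is implicit in the paper's argument.
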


\begin{proof}
Let us denote by $G$ the Gramian matrix associated with $(\tilde\phi^{\gamma,v}_n)_{(n,v)\in \cup_{l=1}^{K^\oplus} \mathcal S_l}$ which entries are $G_{i,j}=\langle \tilde\phi^{\gamma,v_i}_{n_i},\tilde \phi^{\gamma,v_j}_{n_j}\rangle$, where $(n_i,v_i)$, $(n_j,v_j) \in \bigcup_{l=1}^{K^\oplus}\mathcal S_l$. On one hand, the diagonal is simply composed by $G_{i,i}=1+O(\gamma^{-\beta})$, according to \eqref{prop1bis}. On the other hand, if $(n_i,v_i)\neq (n_j,v_j)$ then two cases are allowed : $v_i=v_j$ and $n_i\neq n_j$ or $v_i \neq v_j$. In the first case, we already know by \eqref{prop1bis} that $G_{i,j}=O(\gamma^{-\beta})$. In the second case, $\supp \chi^\gamma_{v_i}\cap \supp \chi^{\gamma}_{v_j}=\emptyset$ for $\gamma$ large enough which implies $G_{i,j}=0$. Then we can conclude that $\det(G)=1+O(\gamma^{-\beta})$ and in particular $\det(G)\neq 0$ for $\gamma$ large enough.
\end{proof}

\subsection{Cutting out the vertices}\label{cutvertices}

This section is a prelude to the study of the asymptotics of the eigenvalues of $Q^\gamma$ (Section~\ref{sectascurvpol}) and the Weyl asymptotics (Section~\ref{sectweylascurvpol}). We show how to separate the convex vertices from the rest of $\Omega$, which we will call \emph{regular part}, using a partition of unity and a Dirichlet bracketing.

We introduce the smooth function $\chi^\gamma_0$ defined by $\chi^\gamma_0 \coloneqq 1-\sum_{v\in\mathcal V} \chi^\gamma_v$, 
where the $\chi^\gamma_v$ are defined in \eqref{chigamma}. For all $v\in\mathcal V\cup\{0\}$, let 
 \[
 \tilde \chi^\gamma_v (x)\coloneqq \frac{\chi^\gamma_v(x)}{\sqrt{\sum_{v\in\mathcal V\cup\{0\}} (\chi_v^\gamma(x))^2}}, \quad x\in\Omega
 \]
 such that $\sum_{v\in\mathcal V\cup\{0\}} (\tilde \chi^\gamma_v)^2=1$ on $\Omega$.
 Then, for all $\phi\in H^1(\Omega)$, 

\begin{align}\label{IMScurv}
q^\gamma(\phi,\phi)= \sum_{v\in\mathcal V\cup \{0\}} q^\gamma(\phi \tilde\chi^\gamma_v,\phi\tilde \chi^\gamma_v) -\int_{\Omega} V(x) \lvert \phi \rvert^2 dx,
\end{align}
with $V(x)\coloneqq\displaystyle\sum_{v\in\mathcal V \cup\{0\}} \lvert \nabla \tilde \chi^\gamma_v(x) \rvert^2$. Notice that, by definition of $\tilde\chi^\gamma_v$, there exists $c>0$ such that
\begin{align}
\label{majV}
\lVert V \rVert_\infty \leq c \gamma^{2\beta}.
\end{align}
Let us define the \emph{regular part}, $\Omega_0\coloneqq \Omega \backslash \bigcup_{v\in\mathcal V}\overline{ B(v,\gamma^{-\beta})}$.  For all $v\in\mathcal V$ we introduce the sesquilinear forms
\[
 q^{\gamma,V}_{v,2\gamma^{-\beta}}(\phi,\phi) =\int_{\Omega_{v,2\gamma^{-\beta}}} (\lvert \nabla \phi\rvert^2 - V(x) \lvert \phi\rvert^2 )dx -\gamma\int_{\Gamma_{v,2\gamma^{-\beta}}}\lvert \phi\rvert^2 ds,
\]
with $D( q^{\gamma,V}_{v,2\gamma^{-\beta}})\coloneqq \{ \phi\in H^1(\Omega_{v,2\gamma^{-\beta}}), \phi(x)=0 \text{ for } x\in \partial \Omega_{v,2\gamma^{-\beta}}\backslash \Gamma_{v,2\gamma^{-\beta}}\}$, and 
\[
 q^{\gamma,V}_0(\phi,\phi)=\int_{\Omega_0} (\lvert\nabla \phi\rvert^2 -V(x) \lvert \phi\rvert^2 )dx -\gamma\int_{\Gamma_0} \lvert \phi\rvert^2 ds,
\]
where $\Gamma_0\coloneqq \partial \Omega_0\cap \partial \Omega$ and $D( q^{\gamma,V}_0)\coloneqq\{ \phi\in H^1(\Omega_0), \phi(x)=0 \text{ for } x\in \partial \Omega_0\backslash \Gamma_0\}$. 

\begin{lemma}\label{mincurvpol}
For all $n\in \mathbb N$ and for all $\gamma >0$, 
\begin{align*}
E_n(Q^\gamma) &\geq E_n\left( (\bigoplus_{v\in\mathcal V}  Q^{\gamma,V}_{v,2\gamma^{-\beta}} ) \bigoplus  Q^{\gamma,V}_0 \right ).
\end{align*}
\end{lemma}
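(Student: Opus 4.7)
The plan is to combine the IMS-type localization formula~\eqref{IMScurv} with Dirichlet bracketing and then conclude by the min-max principle. The key observation is that although the one-sided interior ``Dirichlet'' conditions in the definition of the form domains of $Q^{\gamma,V}_{v,2\gamma^{-\beta}}$ and $Q^{\gamma,V}_0$ look restrictive, the multiplication $\phi \mapsto (\phi\tilde\chi^\gamma_v)_{v\in\mathcal V\cup\{0\}}$ produces elements that automatically satisfy them, because the cut-offs $\tilde\chi^\gamma_v$ vanish on the pieces of the artificial boundary.

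First I would fix $\phi \in H^1(\Omega) = D(q^\gamma)$ and check the membership in the form domains. For $v \in \mathcal V$, the function $\tilde\chi^\gamma_v$ is supported in $B(v,2\gamma^{-\beta})$ and vanishes on $\partial B(v,2\gamma^{-\beta})$, so $\phi\tilde\chi^\gamma_v \in H^1(\Omega_{v,2\gamma^{-\beta}})$ and vanishes on $\partial \Omega_{v,2\gamma^{-\beta}}\setminus \Gamma_{v,2\gamma^{-\beta}}$, hence lies in $D(q^{\gamma,V}_{v,2\gamma^{-\beta}})$. Similarly, $\tilde\chi^\gamma_0$ vanishes on each $\partial B(v,\gamma^{-\beta})$, so $\phi\tilde\chi^\gamma_0 \in D(q^{\gamma,V}_0)$.

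Next I would write the quadratic form of the direct sum at the tuple $(\phi\tilde\chi^\gamma_v)_{v\in\mathcal V\cup\{0\}}$:
\[
\sum_{v\in\mathcal V\cup\{0\}} q^{\gamma,V}_v(\phi\tilde\chi^\gamma_v,\phi\tilde\chi^\gamma_v) = \sum_{v\in\mathcal V\cup\{0\}} q^\gamma(\phi\tilde\chi^\gamma_v,\phi\tilde\chi^\gamma_v) - \int_\Omega V(x) \sum_{v\in\mathcal V\cup\{0\}}(\tilde\chi^\gamma_v)^2 |\phi|^2\, dx,
\]
using that the supports of $\phi\tilde\chi^\gamma_v$ sit inside the corresponding subdomains and satisfy the correct Robin/Dirichlet boundary pieces. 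By the partition-of-unity identity $\sum_v (\tilde\chi^\gamma_v)^2 = 1$ and the IMS identity \eqref{IMScurv}, the right-hand side collapses to $q^\gamma(\phi,\phi)$. In parallel, $\sum_v \|\phi\tilde\chi^\gamma_v\|^2_{L^2} = \|\phi\|^2_{L^2}$ by the same partition-of-unity identity.

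Finally I would apply the min-max principle. Let $A \coloneqq (\bigoplus_{v\in\mathcal V} Q^{\gamma,V}_{v,2\gamma^{-\beta}}) \oplus Q^{\gamma,V}_0$. For any $n$-dimensional subspace $G \subset H^1(\Omega)$, the map $\phi \mapsto (\phi\tilde\chi^\gamma_v)_v$ is linear and injective (since $\phi = \sum_v \tilde\chi^\gamma_v(\phi\tilde\chi^\gamma_v)$), so it produces an $n$-dimensional test subspace in $D(a)$ on which the Rayleigh quotients of $A$ equal the Rayleigh quotients of $Q^\gamma$ on $G$. Taking the supremum over $G$ and then the infimum gives $E_n(A) \leq E_n(Q^\gamma)$, which is the desired inequality. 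I do not anticipate a substantial obstacle here; the only delicate point is the verification that $\phi\tilde\chi^\gamma_v$ lies in the correct Dirichlet form domain, which is guaranteed by the construction of the cut-offs, together with the fact that the disjointness of supports for $v \neq v'$ in $\mathcal V$ ensures no cross terms appear between different corner blocks.
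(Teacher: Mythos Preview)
Your proof is correct and follows essentially the same route as the paper: verify that $\phi\tilde\chi^\gamma_v$ lands in the appropriate form domain, use the IMS identity~\eqref{IMScurv} together with $\sum_v(\tilde\chi^\gamma_v)^2=1$ to compare the quadratic forms and norms, and conclude by min--max. You in fact obtain equality between $q^\gamma(\phi,\phi)$ and $\sum_v q^{\gamma,V}_v(\phi\tilde\chi^\gamma_v,\phi\tilde\chi^\gamma_v)$, whereas the paper only records the inequality~\eqref{mincurvpol2} needed for the conclusion; your explicit injectivity check via $\phi=\sum_v\tilde\chi^\gamma_v(\phi\tilde\chi^\gamma_v)$ (or equivalently via the isometry of norms) is a nice touch that the paper leaves implicit.
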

\begin{proof}
Notice that $\phi \tilde\chi^\gamma_v \in D( q^{\gamma,V}_{v,2\gamma^{-\beta}})$ for all $v\in \mathcal V$ and $\phi \tilde\chi^\gamma_0 \in D( q^{\gamma,V}_0)$.
Thanks to \eqref{IMScurv}, for all $\phi \in H^1(\Omega)$,
\begin{align}
\label{mincurvpol2}
q^\gamma(\phi,\phi) \geq \sum_{v\in\mathcal V} q^{\gamma,V}_{v,2\gamma^{-\beta}}(\phi \tilde\chi^\gamma_v, \phi \tilde\chi^\gamma_v) +  q^{\gamma,V}_0(\phi \tilde\chi^\gamma_0,\tilde \phi \chi^\gamma_0).
\end{align}
Moreover, $\lVert \phi \rVert^2_{L^2(\Omega)}= \displaystyle \sum_{v\in\mathcal V} \lVert \phi \tilde \chi^\gamma_v\rVert^2_{L^2(\Omega_{v,2\gamma^{-\beta}})} +\lVert \phi \tilde\chi^\gamma_0\rVert^2_{L^2(\Omega_0)}$. Then, by \eqref{mincurvpol2} and the min-max principle we get, for all $n\in\mathbb N$,
\begin{align*}
E_n(Q^\gamma) &\geq 
\min_{\substack{G\subset D(q^\gamma) \\ \dim(G)=n}} 
\max_{\substack{\phi \in G\\ \phi \neq 0}}
\frac{\sum_{v\in\mathcal V}  q^{\gamma,V}_{v,2\gamma^{-\beta}}(\phi \tilde\chi^\gamma_v,\phi \tilde\chi^\gamma_v) +  q^{\gamma,V}_0(\phi \tilde\chi^\gamma_0,\phi \tilde\chi^\gamma_0)}{\sum_{v\in\mathcal V} \lVert \phi\tilde\chi^\gamma_v \rVert^2_{L^2(\Omega_{v,2\gamma^{-\beta}})}+\lVert \phi \tilde\chi^\gamma_0\rVert^2_{L^2(\Omega_0)}} \\
&\geq 
\min_{\substack{G\subset D\left((\bigoplus_{v\in\mathcal V}  q^{\gamma,V}_{v,2\gamma^{-\beta}})\bigoplus  q^{\gamma,V}_0 \right)\\ \dim(G)=n }}
\max_{\substack{\left((\phi_v)_{v\in\mathcal V}, \phi_0 \right) \in G \\ \left((\phi_v)_{v\in\mathcal V}, \phi_0 \right) \neq (0,...,0)  }}
\frac{\sum_{v\in\mathcal V}  q^{\gamma,V}_{v,2\gamma^{-\beta}} (\phi_v,\phi_v) + q^{\gamma,V}_0(\phi_0,\phi_0)}
{\sum_{v\in\mathcal V} \lVert \phi_v \rVert^2_{L^2(\Omega_{v,2\gamma^{-\beta}})}+ \lVert \phi_0\rVert^2_{L^2(\Omega_0)}} \\
&= E_n\left( (\bigoplus_{v\in\mathcal V} Q^{\gamma,V}_{v,2\gamma^{-\beta}} ) \bigoplus Q^{\gamma,V}_0 \right ),
\end{align*}
which concludes the proof.
\end{proof}
Let us now introduce the sesquilinear forms
\[
 q^\gamma_{v,\gamma^{-\beta}}(\phi,\phi) =\int_{\Omega_{v,\gamma^{-\beta}}} \lvert \nabla \phi\rvert^2 dx -\gamma\int_{\Gamma_{v,\gamma^{-\beta}}}\lvert \phi\rvert^2 ds,
\]
with $D(q^\gamma_{v,\gamma^{-\beta}})\coloneqq \{\phi\in H^1(\Omega_{v,\gamma^{-\beta}}), \phi(x)= 0 \text{ for } x\in \partial \Omega_{v,\gamma^{-\beta}}\backslash \Gamma_{v,\gamma^{-\beta}}\}$ and 
\[
q^\gamma_0(\phi,\phi)=\int_{\Omega_0} \lvert\nabla \phi\rvert^2 dx -\gamma\int_{\Gamma_0} \lvert \phi\rvert^2 ds,
\]
where $D(q^\gamma_0)\coloneqq D( q^{\gamma,V}_0)$.

\begin{lemma}\label{Dbracket}
For all $n\in\mathbb N$ and for all $\gamma >0$, 
\[
E_n(Q^\gamma) \leq E_n\left(\bigoplus_{v\in\mathcal V} Q^\gamma_{v,\gamma^{-\beta}}\bigoplus Q^\gamma_0 \right).
\]
\end{lemma}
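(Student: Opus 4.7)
The plan is to prove this by a standard Dirichlet bracketing argument via the min-max principle: the lemma says that replacing the operator $Q^\gamma$ on $\Omega$ by its direct sum version on the disjoint decomposition $\Omega = \bigsqcup_{v\in\mathcal V} \Omega_{v,\gamma^{-\beta}} \sqcup \Omega_0$ (modulo a set of measure zero) with Dirichlet conditions on the artificial circular boundaries $\partial B(v,\gamma^{-\beta})\cap\Omega$ can only raise eigenvalues; the direction we want here is the upper bound on $E_n(Q^\gamma)$, which corresponds to using more restrictive test functions (those vanishing on the artificial boundary) in the Rayleigh quotient of $Q^\gamma$.

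More precisely, first I would define an extension-by-zero map. Given $(\phi_v)_{v\in\mathcal V}$ with $\phi_v \in D(q^\gamma_{v,\gamma^{-\beta}})$ and $\phi_0 \in D(q^\gamma_0)$, observe that each $\phi_v$ is an $H^1$ function on $\Omega_{v,\gamma^{-\beta}}$ whose trace vanishes on $\partial \Omega_{v,\gamma^{-\beta}}\backslash \Gamma_{v,\gamma^{-\beta}} = \partial B(v,\gamma^{-\beta})\cap\Omega$, and likewise $\phi_0$ has vanishing trace on $\partial\Omega_0\backslash\Gamma_0 = \bigcup_v (\partial B(v,\gamma^{-\beta})\cap\Omega)$. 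Consequently, extending each $\phi_v$ and $\phi_0$ by zero to all of $\Omega$ produces functions in $H^1(\Omega)$, and the support of the extended $\phi_v$ lies in $\overline{\Omega_{v,\gamma^{-\beta}}}$ while the support of the extended $\phi_0$ lies in $\overline{\Omega_0}$. Since for $\gamma$ large the balls $B(v,\gamma^{-\beta})$ are pairwise disjoint, these supports overlap only on a set of Lebesgue measure zero in $\Omega$ and on a set of one-dimensional Hausdorff measure zero on $\partial\Omega$. Thus the sum $\Phi \coloneqq \tilde\phi_0 + \sum_{v\in\mathcal V}\tilde\phi_v$ lies in $H^1(\Omega) = D(q^\gamma)$ and satisfies
\[
q^\gamma(\Phi,\Phi) = q^\gamma_0(\phi_0,\phi_0) + \sum_{v\in\mathcal V} q^\gamma_{v,\gamma^{-\beta}}(\phi_v,\phi_v), \qquad \lVert \Phi\rVert^2_{L^2(\Omega)} = \lVert \phi_0\rVert^2_{L^2(\Omega_0)} + \sum_{v\in\mathcal V}\lVert \phi_v\rVert^2_{L^2(\Omega_{v,\gamma^{-\beta}})}.
\]

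Next I would apply the min-max principle. Denote by $\iota$ the linear injection described above from $D(q^\gamma_0)\oplus\bigoplus_{v\in\mathcal V} D(q^\gamma_{v,\gamma^{-\beta}})$ into $D(q^\gamma)$. For any $n$-dimensional subspace $G$ of the direct-sum domain, $\iota(G)$ is an $n$-dimensional subspace of $D(q^\gamma)$ on which the Rayleigh quotient of $Q^\gamma$ coincides with the Rayleigh quotient of $\bigoplus_{v\in\mathcal V} Q^\gamma_{v,\gamma^{-\beta}} \bigoplus Q^\gamma_0$. Therefore
\[
E_n(Q^\gamma) \leq \inf_{\substack{G\subset D(q^\gamma_0)\oplus\bigoplus_v D(q^\gamma_{v,\gamma^{-\beta}})\\ \dim G = n}} \sup_{\substack{\Psi\in G\\ \Psi\neq 0}} \frac{(\bigoplus q^\gamma_{v,\gamma^{-\beta}}\oplus q^\gamma_0)(\Psi,\Psi)}{\lVert \Psi\rVert^2} = E_n\Bigl(\bigoplus_{v\in\mathcal V} Q^\gamma_{v,\gamma^{-\beta}}\bigoplus Q^\gamma_0\Bigr),
\]
which is the desired inequality.

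The only delicate point is the verification that extension by zero produces an $H^1$ function, i.e., the absence of a jump in the trace across the cut $\partial B(v,\gamma^{-\beta})\cap\Omega$; this is precisely guaranteed by the Dirichlet condition built into the definitions of $D(q^\gamma_{v,\gamma^{-\beta}})$ and $D(q^\gamma_0)$, so there is no real obstacle, only a careful bookkeeping of boundary traces. Observe that the cut-off function $V$ and the partition $\tilde\chi^\gamma_v$ from Lemma~\ref{mincurvpol} play no role in this direction of the bracketing: the lower bound uses IMS-style localization with a form defect, while the upper bound uses only the geometric decomposition and the Dirichlet extension.
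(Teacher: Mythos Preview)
Your proof is correct and follows exactly the same approach as the paper: the paper's proof is a one-liner stating that if $\phi \in D\bigl((\bigoplus_{v\in\mathcal V} q^\gamma_{v,\gamma^{-\beta}})\bigoplus q^\gamma_0\bigr)$ then $\phi \in D(q^\gamma)$, and invokes the min-max principle. You have simply spelled out the extension-by-zero identification and the coincidence of Rayleigh quotients in full detail.
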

\begin{proof}
It is a consequence of the min-max principle, noticing that if $\phi \in D\left ( (\bigoplus_{v\in\mathcal V} q^\gamma_{v,\gamma^{-\beta}}) \bigoplus q^\gamma_0\right)$, then $\phi \in D(q^\gamma)$. 
\end{proof}

Recall that as $\Omega$ is a curvilinear polygon, its boundary is composed by $M\geq 1$  connected arcs $(\Gamma_k)_{k=1}^M$ such that $\partial \Omega = \bigcup_{k=1}^M \overline{\Gamma_k}$. We denote by $l_k$ the lenght of $\Gamma_k$ and by $\kappa_k$ its curvature. The following lemma gives us some estimates concerning the \emph{regular part} of $\Omega$. The proof is given in the next section.

\begin{lemma}\label{thregpart}
For $\gamma$ large enough and for all $\beta\in(\frac{1}{2},1)$ one has 
\begin{align}
\label{lbvpregpart}
E_1(Q^{\gamma,V}_0)\geq -\gamma^2 -C\gamma^{2\beta}.
\end{align}
Moreover, for all $E\in(-1,0)$, $\lambda \in\mathbb R$ and for all $\beta\in(1/2,1)$, one has as $\gamma >0$ is large enough,
\begin{align}
\label{upNregpart}
\mathcal N(Q^{\gamma,V}_0,E\gamma^2) &\leq  \gamma \frac{\lvert \partial \Omega\rvert\sqrt{E+1}}{\pi}+C\gamma^{1-\beta}, \\
\label{lbNregpart}
\mathcal N(Q^{\gamma}_0,E\gamma^2) &\geq \gamma \frac{\lvert \partial \Omega\rvert\sqrt{E+1}}{\pi}-C\gamma ^{1-\beta},
\end{align}
and,
\begin{align}
\label{upNregpartbis}
\mathcal N ( Q^{\gamma,V}_0, -\gamma^2+\lambda\gamma) &\leq \frac{\sqrt{\gamma}}{\pi} \sum_{k=1}^M \int_0^{l_k}\sqrt{(\kappa_k(s)+\lambda)_+} ds+C \gamma^{\beta-1/2}, \\
\label{lbNregpartbis}
\mathcal N(Q^{\gamma}_{0}, -\gamma^2+\lambda\gamma) &\geq \frac{\sqrt{\gamma}}{\pi} \sum_{k=1}^M \int_0^{l_k} \sqrt{(\kappa_k(s)+\lambda)_+} ds-C.
\end{align}
\end{lemma}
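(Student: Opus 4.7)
The plan is to compare the operators on $\Omega_0$ with operators on smooth domains so that the known Weyl asymptotics \eqref{weyl1}--\eqref{weyl2} for smooth $\Omega$ and the bound of \cite[Theorem~1]{Pan} become applicable. The potential $V$ is absorbed as a perturbation via the form inequality $Q^{\gamma,V}_0 \geq Q^\gamma_0 - c\gamma^{2\beta}$ obtained from \eqref{majV}; reversing the sign, $Q^{\gamma,V}_0 \leq Q^\gamma_0$. Throughout, $\Omega_0$ is obtained from $\Omega$ by deleting small balls of radius $\gamma^{-\beta}$ around the convex vertices, so the removed portion of the arc length is $O(\gamma^{-\beta})$.

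For \eqref{lbvpregpart} I would first use the form inequality to reduce to estimating $E_1(Q^\gamma_0)$. The homogeneous Dirichlet condition on $\partial\Omega_0\setminus\partial\Omega$ lets one extend every element of $D(q^\gamma_0)$ by zero to a $\gamma$-dependent smooth domain $\Omega^{\text{reg}}\supset\Omega_0$ obtained by closing each removed disk with a smooth cap whose curvature is uniformly bounded in $\gamma$. By the min-max principle this yields $E_1(Q^\gamma_0) \geq E_1(Q^{\gamma,\text{reg}})$, and the smooth-domain lower bound $E_1(Q^{\gamma,\text{reg}}) \geq -\gamma^2 - C\gamma$ of \cite{Pan} applies uniformly. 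Since $2\beta>1$, the term $C\gamma$ is subsumed into $C\gamma^{2\beta}$.

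For the four Weyl estimates I would adapt the scheme of \cite{Pan}, inspired by \cite[Sec.~XIII]{RS}. A secondary IMS-type partition of unity separates a thin $\delta_\gamma$-tube $\{x\in\Omega_0:\dist(x,\partial\Omega)<\delta_\gamma\}$ around the Robin boundary from the interior bulk. On the bulk the quadratic form is nonnegative up to an error of order $\gamma^{2\beta}+\delta_\gamma^{-2}$, so the eigenvalues below $E\gamma^2$ (with $E<0$) or below $-\gamma^2+\lambda\gamma$ all come from the tube; there, tubular coordinates $(s,t)$ along each arc $\Gamma_k$ allow a Dirichlet/Neumann bracketing in the tangential variable that reduces the problem to a direct sum of one-dimensional operators of the types $\mathscr D_{\gamma,\delta_\gamma}$ and $\mathscr R_{\gamma,\beta(s),\delta_\gamma}$ covered by Propositions~\ref{LemmaHP}--\ref{LemmaPan}. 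A Riemann-sum count of their negative eigenvalues, summed over $k=1,\dots,M$, recovers the leading terms $\frac{|\partial\Omega|\sqrt{E+1}}{\pi}\gamma$ and $\frac{\sqrt\gamma}{\pi}\sum_k\int_0^{l_k}\sqrt{(\kappa_k+\lambda)_+}\,ds$. The missing arc length near vertices contributes $O(\gamma^{-\beta})$ to $|\partial\Omega|$, producing the stated $O(\gamma^{1-\beta})$ remainder in \eqref{upNregpart}--\eqref{lbNregpart}; for \eqref{upNregpartbis}--\eqref{lbNregpartbis} one further expands the tubular metric in powers of $\kappa_k t$, replaces the tangential operator by a curvature-shifted 1D model, and matches the remainders by optimizing $\delta_\gamma$.

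The main obstacle will be the fine asymptotics \eqref{upNregpartbis}--\eqref{lbNregpartbis}, where one must carry the full geometric data (the curvatures $\kappa_k$) through the change of variables uniformly in $s$, while simultaneously shrinking $\delta_\gamma$ enough that the exponential error $\sim e^{-\gamma\delta_\gamma}$ from Proposition~\ref{LemmaPan} is negligible, yet keeping it large enough that the tangential and bulk contributions remain of lower order. The announced error terms $O(\gamma^{\beta-1/2})$ and $O(1)$ follow from balancing these constraints against the $O(\gamma^{-\beta})$ boundary defect near vertices and the $O(\gamma^{2\beta})$ shift from $V$, using the freedom $\beta\in(\tfrac12,1)$.
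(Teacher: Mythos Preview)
Your overall scheme for the Weyl bounds \eqref{lbNregpart} and \eqref{lbNregpartbis} (tubular coordinates, Dirichlet/Neumann bracketing, reduction to the one--dimensional models of Propositions~\ref{LemmaHP}--\ref{LemmaPan}, Riemann sums) is essentially what the paper does. There are, however, two genuine gaps.

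\medskip
\textbf{The smooth extension for \eqref{lbvpregpart}.} A cap that ``closes the removed disk'' lives at scale $\gamma^{-\beta}$: the two edge endpoints you must join are $O(\gamma^{-\beta})$ apart and the tangent directions differ by the fixed angle $\pi-2\alpha_v$, so any smooth connecting arc staying on the correct side has curvature of order $\gamma^{\beta}$, \emph{not} uniformly bounded. With $\kappa_{\max}(\Omega^{\mathrm{reg}})\sim\gamma^\beta$, the bound of \cite{Pan} only yields $E_1(Q^{\gamma,\mathrm{reg}})\geq -\gamma^2-C\gamma^{1+\beta}$, which is strictly weaker than \eqref{lbvpregpart} for every $\beta<1$. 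The paper makes exactly this point (``we cannot use the same trick\ldots since $\Omega_0$ depends on $\gamma$'') and instead re-runs the Pankrashkin argument directly in tubular coordinates on $\Omega_0$, carrying the potential $V$ along.

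\medskip
\textbf{Absorbing $V$ for \eqref{upNregpartbis}.} Using only $\lVert V\rVert_\infty\leq c\gamma^{2\beta}$ gives
\[
\mathcal N\bigl(Q^{\gamma,V}_0,\,-\gamma^2+\lambda\gamma\bigr)
\;\leq\;
\mathcal N\bigl(Q^{\gamma}_0,\,-\gamma^2+\lambda\gamma+c\gamma^{2\beta}\bigr),
\]
and since $2\beta>1$ the shift $c\gamma^{2\beta}$ swamps $\lambda\gamma$: the effective threshold is $-\gamma^2+\tilde\lambda\gamma$ with $\tilde\lambda\sim\gamma^{2\beta-1}\to\infty$, and the resulting leading term $\frac{\sqrt\gamma}{\pi}\sum_k\int\sqrt{(\kappa_k+\tilde\lambda)_+}\,ds$ is of order $\gamma^{\beta}$, not $\gamma^{1/2}$. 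No choice of $\delta_\gamma$ fixes this. The missing observation is that $\operatorname{supp}V$ lies in the annuli $B(v,2\gamma^{-\beta})\setminus B(v,\gamma^{-\beta})$, hence in tubular coordinates $V$ is supported on two end strips of $s$--length $O(\gamma^{-\beta})$ of each arc. The paper therefore splits each tube into three pieces: on the long middle piece $V\equiv0$ and one recovers the curvature integral; on the two short end pieces one uses the crude bound $\lVert V\rVert_\infty\leq c\gamma^{2\beta}$ but the shortness $O(\gamma^{-\beta})$ keeps their contribution to the counting function at $O(\gamma^{\beta-1/2})$. The same splitting is what makes \eqref{lbvpregpart} and \eqref{upNregpart} work with the stated remainders.
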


\subsection{Proof of Lemma~\ref{thregpart}}

This section is devoted to the proof of Lemma~\ref{thregpart}.  
Recall that $\partial \Omega=\bigcup_{k=1}^M \overline{\Gamma_k}$, and $l_k$ denotes the lenght of $\Gamma_k$.
We consider $\gamma_k$ the parametrization by the arc length of $\overline{\Gamma_k}$, namely :
 $\gamma_k \in C^4([0,l_k], \mathbb R^2)$ is injective, 
$\gamma_k(s)=(\gamma_{k,1}(s),\gamma_{k,2}(s)) \in \overline{\Gamma_k}$ and
 $\lvert \gamma_k'(s)\rvert =1$ for all $s\in[0,l_k]$. 
We denote by $\nu_k(s)$ the unit outward normal of $\Gamma_k$ at the point $s$ and suppose that the orientation of $\gamma_k$ is chosen such that 
$\nu_k(s)= (\gamma'_{k,2}(s),- \gamma'_{k,1}(s))$ for each $k$. 
Let $\kappa_k$ be the signed curvature of $\Gamma_k$:
\[
\kappa_k(s)=\gamma'_{k,1}(s)\gamma''_{k,2}(s)-\gamma'_{k,2}(s)\gamma''_{k,1}(s),
\]
and $\kappa_{\max}\coloneqq\displaystyle\max_{k=1,...,M} \left(\displaystyle \max _{s\in[0,l_k]}\kappa_k(s)\right)$.
We introduce the map
\[
\varphi_k : (0,l_k)\times \mathbb R \to \mathbb R^2,\quad \varphi_k(s,t)=\gamma_k(s) - t \nu_k(s).
\]
There exists $a_k>0$ such that, for all $a<a_k$, $\varphi_k$
is a diffeomorphism between $\square^k_a\coloneqq (0,l_k)\times (0,a)$ and $\Omega^k_a\coloneqq \varphi_k(\square^k_a)$. We define
\begin{align}
\label{truncstrip}
\tilde \Omega^k_a\coloneqq \Omega^k_a\cap \Omega_0.
\end{align}
Recall that the parameter $\beta \in (1/2,1)$ was introduced in \eqref{chigamma}.
For $\epsilon \in (0, 1-\beta)$, let $a_\gamma\coloneqq \gamma^{-1+\epsilon}$. Then, for $\gamma$ large enough and for $k\neq k'$ we have
\begin{align}
\label{empty}
\tilde \Omega^k_{a_\gamma}\cap \tilde\Omega^{k'}_{a_\gamma}=\emptyset.
\end{align}
In the following, $\gamma>0$ is large enough such that \eqref{empty} is satisfied and $a_\gamma< \displaystyle\min_{k=1,...,M}a_k$.

\subsubsection{Proof of \eqref{lbvpregpart}, \eqref{upNregpart} and \eqref{upNregpartbis} }\label{Neumann}

Notice that in order to prove \eqref{lbvpregpart} we cannot use the same trick used in the proof of Proposition~\ref{asvp}, namely extending the domain in a smooth way to apply  \cite[Theorem 1]{Pan}. Indeed, now $\Omega_0$ depends on $\gamma$. To overcome this problem, we adapt the sketch of the proof of \cite{Pan}. The difficulty remains in the fact that we have to control the potential $V$.


Let $k\in\{1,...,M\}$ be fixed. 
We define 
\[
 q^{\gamma,N,V}_k(\phi,\phi)=\int_{ \Omega^k_{a_\gamma}} \left(\lvert \nabla \phi \rvert^2 -V(x) \lvert \phi\rvert^2 \right) dx -\gamma \int_{\Gamma_k} \lvert \phi\rvert^2 ds,
\]
 where $D(q^{\gamma,N,V}_k)\coloneqq \{\phi \in H^1(\Omega^k_{a_\gamma}), \phi(x)=0 \text{ for } x \in \Gamma^k_l\cup \Gamma^k_r\}$, $\Gamma^k _l\coloneqq\{ \varphi(0,t), t\in(0,a_\gamma)\}$ and $\Gamma^k_r\coloneqq\{\varphi_k(l_k,t), t\in (0,a_\gamma)\}$.

\begin{proposition}\label{interpb}
For $\gamma$ large enough we have
\begin{align}
\label{interpb1}
E_1(Q^{\gamma,N,V}_k)\geq -\gamma^2-C\gamma^{2\beta}.
\end{align}
Moreover, for all $E\in (-1,0)$,
\begin{align}
\label{interpb2}
\mathcal N(Q_k^{\gamma,N,V}, E\gamma^2) \leq \gamma \frac{l_k \sqrt{E+1}}{\pi}+ C\gamma^{1-\beta},
\end{align}
and for all $\lambda\in \mathbb R$,
\begin{align}
\label{interpb3}
\mathcal N(Q_k^{\gamma,N,V}, -\gamma^2+\lambda\gamma)\leq \frac{\sqrt{\gamma}}{\pi} \int_0^{l_k} \sqrt{(\kappa_k(s)+\lambda)_+} ds + C \gamma^{\beta-1/2}.
\end{align}
\end{proposition}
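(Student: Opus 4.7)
The plan is to pull back $q^{\gamma,N,V}_k$ to the flat rectangle $\square^k_{a_\gamma}=(0,l_k)\times(0,a_\gamma)$ via the tubular-coordinate map $\varphi_k(s,t)=\gamma_k(s)-t\nu_k(s)$, whose Jacobian equals $1-t\kappa_k(s)$, and then apply the standard unitary change $u\mapsto \sqrt{1-t\kappa_k(s)}\,u$ to flatten the induced measure. The transformed form on $\square^k_{a_\gamma}$ has five contributions: a tangential kinetic term with metric factor $(1-t\kappa_k(s))^{-2}$ in front of $|\partial_s u|^2$; the flat normal term $|\partial_t u|^2$; a bounded curvature potential $W(s,t)$ produced by the unitarization (depending on $\kappa_k,\kappa_k'$); the pulled-back potential $\tilde V=V\circ\varphi_k$ satisfying $\|\tilde V\|_\infty\le c\gamma^{2\beta}$ by \eqref{majV}; and the Robin trace $-\gamma\int_0^{l_k}|u(s,0)|^2\,ds$. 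Dirichlet conditions persist at $s\in\{0,l_k\}$ and the natural (Neumann) condition applies at $t=a_\gamma$. Since $a_\gamma=\gamma^{-1+\epsilon}\to 0$, the metric factor lies uniformly in $[1-Ca_\gamma,1+Ca_\gamma]$.

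For \eqref{interpb1}, I drop the non-negative tangential term, absorb $W-\tilde V\ge -C-c\gamma^{2\beta}$ into a constant, and reduce to the one-dimensional Robin-Neumann Laplacian $B_t$ on $(0,a_\gamma)$ with Robin parameter $\gamma$ at $t=0$ and Neumann at $t=a_\gamma$. Since $\gamma a_\gamma=\gamma^\epsilon\to+\infty$, an explicit computation of the transcendental equation (or a bracketing with the Dirichlet-endpoint variant of Proposition~\ref{LemmaHP}) gives $E_1(B_t)\ge -\gamma^2-C e^{-c\gamma^\epsilon}$, so that $E_1(Q^{\gamma,N,V}_k)\ge -\gamma^2-c\gamma^{2\beta}-C$, yielding \eqref{interpb1}.

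For \eqref{interpb2} and \eqref{interpb3}, the idea is to minorize $\tilde q$ by a separable quadratic form in $(s,t)$ and then invoke the tensor-sum counting identity $\mathcal N(A\otimes I+I\otimes B,\mu)=\sum_n \mathcal N(A,\mu-E_n(B))$. Using $(1-t\kappa_k)^{-2}\ge 1-Ca_\gamma$, the tangential operator is bounded below by $(1-Ca_\gamma)$ times the Dirichlet Laplacian on $(0,l_k)$, possibly shifted by a bounded potential from $W-\tilde V$; the normal operator is $B_t$. Only the first normal eigenvalue contributes at our energy scales, since $E_2(B_t)\ge 0$ (the second Neumann-Robin eigenvalue grows like $\gamma^{2-\epsilon}$, as one checks from the spectral equation $\tan(ka_\gamma)=\gamma/k$), whereas the thresholds $E\gamma^2$ and $-\gamma^2+\lambda\gamma$ are strictly below $E_2(B_t)$ and the Dirichlet Laplacian in $s$ is positive. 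Substituting the coarse expansion $E_1(B_t)=-\gamma^2+O(\gamma^{2\beta})$ into the 1D Weyl asymptotics $\mathcal N(-\partial_s^2,\nu)=\frac{l_k\sqrt\nu}{\pi}+O(1)$ at $\nu=(E+1)\gamma^2+O(\gamma^{2\beta})$ produces the leading term of \eqref{interpb2}. For \eqref{interpb3} one instead uses the sharper $s$-dependent expansion $E_1(B_t(s))=-\gamma^2-\kappa_k(s)\gamma+O(1)$ (of the type of Proposition~\ref{LemmaPan}, where the local curvature enters the effective Robin parameter), which acts as a semi-classical potential $-\kappa_k(s)\gamma$ for the effective $s$-problem $-\partial_s^2-\kappa_k(s)\gamma$; its Weyl asymptotics at energy $-\gamma^2+\lambda\gamma$ produces the advertised integral $\frac{\sqrt\gamma}{\pi}\int_0^{l_k}\sqrt{(\kappa_k(s)+\lambda)_+}\,ds$.

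The main obstacle is that the metric factor $(1-t\kappa_k(s))^{-2}$ does not separate in $(s,t)$, so no exact tensor-product decomposition of the operator is available. To attain the announced remainders $O(\gamma^{1-\beta})$ and $O(\gamma^{\beta-1/2})$, one has to carefully balance three competing errors: the mismatch of the metric factor (of order $a_\gamma=\gamma^{-1+\epsilon}$) scaled by the tangential dispersion, the shift of size $\gamma^{2\beta}$ in the Weyl density inside the square root coming from $\tilde V$, and the semi-classical remainder of the effective 1D Schrödinger operator. The finer statement \eqref{interpb3} furthermore requires an $s$-dependent implementation of the Robin eigenvalue expansion, which is standardly obtained via a partition of unity in $s$ at a scale on which $\kappa_k$ is essentially constant and then reassembling the local counts.
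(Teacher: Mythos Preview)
Your overall architecture (tubular coordinates, unitarization by $\sqrt{1-t\kappa_k}$, reduction to a separable model, then 1D Weyl counts) matches the paper, and your argument for \eqref{interpb1} is essentially correct, up to the minor point that the unitarization produces a Robin term $\tfrac12\int\frac{\kappa}{1-a_\gamma\kappa}|\phi(s,a_\gamma)|^2\,ds$ at $t=a_\gamma$ rather than a pure Neumann condition, so the relevant transverse model is the Robin--Robin operator of Proposition~\ref{LemmaPan}, not the Robin--Neumann one.

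There is, however, a genuine gap in your treatment of \eqref{interpb2} and \eqref{interpb3}. You absorb $\tilde V$ via the global bound $\|\tilde V\|_\infty\le c\gamma^{2\beta}$, i.e.\ as a uniform additive shift. For \eqref{interpb3} this is fatal: since $\beta>\tfrac12$, a shift of order $\gamma^{2\beta}$ dominates the curvature scale $\kappa_k(s)\gamma$, so after subtracting $E_1$ of the transverse model you are left with $\nu(s)\sim c\gamma^{2\beta}$ rather than $(\kappa_k(s)+\lambda)\gamma$, and the claimed leading term $\tfrac{\sqrt\gamma}{\pi}\int\sqrt{(\kappa_k+\lambda)_+}$ cannot emerge from your scheme. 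The same issue spoils \eqref{interpb2} for $\beta>\tfrac23$: the remainder you would get is $O(\gamma^{2\beta-1})$, not $O(\gamma^{1-\beta})$.

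The missing idea, which the paper exploits, is that $V=\sum_v|\nabla\tilde\chi^\gamma_v|^2$ is supported in $\bigcup_v B(v,2\gamma^{-\beta})\setminus\overline{B(v,\gamma^{-\beta})}$, so after pullback $\tilde V$ is supported only on two $s$-intervals of length $O(\gamma^{-\beta})$ near the endpoints of $(0,l_k)$. The paper accordingly performs a Neumann bracketing in $s$ into three pieces: on the long middle piece $(s_\gamma,l_k-s_\gamma)$ one has $\tilde V\equiv 0$ and the standard curvature expansion applies; on the two end pieces the potential is indeed of size $\gamma^{2\beta}$, but their length is only $s_\gamma=b\gamma^{-\beta}$, so their contribution to the counting function is controlled by $O(\gamma^{1-\beta})$, respectively $O(\gamma^{\beta-1/2})$. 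Your last paragraph mentions an $s$-partition to freeze the curvature, but this additional three-piece splitting to localize $\tilde V$ is the step you need to add.
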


\begin{proof}
As the study is the same for all $k\in\{1,...,M\}$ we omit the indices $k$ in the proof.
The idea is to perform a change of variables thanks to the diffeomorphism $\varphi$ in order to work with 
$\square_{a_\gamma}$ 
which will allows us to use separation of variables. 
But, in order to avoid the weight in the integrals due to the Jacobian of the change of variables, we first introduce a unitary transform. Define $U_{a_\gamma} : L^2(\Omega_{a_\gamma})\to L^2(\square_{a_\gamma})$, 
$U_{a_\gamma}(\phi)(s,t)\coloneqq \sqrt{1-t\kappa(s)} \phi\circ\varphi(s,t)$. 
Then, $U_{a_\gamma}\left(D(q^{\gamma,N,V})\right)=\{\phi \in H^1(\square_{a_\gamma}), \phi(0,t)=\phi(l,t)=0\}$.
 It is easy to prove (see \cite{Pan} or \cite{EMP} for a detailed computation) that, after using integrations by parts,
$q^{\gamma,N,V}(\phi,\phi)=p^{\gamma,N,\tilde V}(U_{a_\gamma}\phi,U_{a_\gamma}\phi)$ with 
$D(p^{\gamma,N,\tilde V})\coloneqq U_{a_\gamma}\left(D(q^{\gamma,N,V})\right)$, 
and $p^{\gamma,N,\tilde V}$ is given by the following expression,
\begin{align*}
p^{\gamma,N,\tilde V}(\phi,\phi) &=
 \int_{\square_{a_\gamma}} 
\left( \frac{1}{(1-t\kappa(s))^2} \lvert \partial_s \phi\rvert^2 + \lvert\partial_t \phi \rvert^2 -\tilde V(s,t) \lvert \phi \rvert^2 -P(s,t) \lvert \phi\rvert^2 \right) ds dt \\
&+\frac{1}{2} \int_0^{l} \frac{\kappa(s)}{1-a_\gamma\kappa(s)} \lvert \phi (s,a_\gamma)\rvert^2 ds
- \int_0^{l} \left(\frac{\kappa(s)}{2}+\gamma\right) \lvert \phi(s,0)\rvert^2 ds ,
\end{align*}
where $\tilde V(s,t)=V\circ\varphi(s,t)$ and 
\begin{align}
\label{potP}
P(s,t)= \frac{\kappa^2(s)}{4(1-t\kappa(s))^2} + \frac{t \kappa''(s)}{2(1-t\kappa(s))^3} + \frac{5 t^2 (\kappa'(s))^2}{4(1-t\kappa(s))^4}.
\end{align}
As $U_{a_\gamma}$ is a unitary map, we immediately get by the min-max principle, for all $n\in\mathbb N$,
\begin{align}
\label{ineg3.1}
E_n(Q^{\gamma,N,V})=E_n(P^{\gamma,N,\tilde V}).
\end{align}
Before going further, let us make a remark on the potential $\tilde V$. 
Recall that $V(x)=\sum_{v\in\mathcal V\cup\{0\}} \lvert \nabla \tilde\chi^\gamma_v(x)\rvert^2$. 
Then, $\supp V\subset \bigcup_{v\in\mathcal V} B(v,2\gamma^{-\beta})\backslash \overline{B(v,\gamma^{-\beta})}$.
 This implies that there exists a constant $b>0$ such that 
\[
\supp \tilde V\subset \left((0,b\gamma^{-\beta})\times (0,a_\gamma)\right) \bigcup \left((l-b\gamma^{-\beta},l)\times (0,a_\gamma)\right).
\]
We denote $s_\gamma\coloneqq b\gamma^{-\beta}$ and introduce 
\[
\xi(s)\coloneqq 
\begin{cases}
1,&\text{ if } s\in (0,s_\gamma) \text{ or } s\in (l-s_\gamma,l),\\
0,&\text{ otherwise.}
\end{cases}
\]
By \eqref{majV} we can write, for all $(s,t)\in\square_{a_\gamma}$,
\[
\tilde V(s,t)\leq c\gamma^{2\beta} \xi(s).
\]
We now give some estimates which will simplify the study.
As $\kappa\in C^2([0,l],\mathbb R)$, there exist $\mathcal K>0$ and $C>0$ such that, $\lvert \displaystyle \frac{\kappa(s)}{1-t\kappa(s)}\rvert \leq 2\mathcal K$  for all $(s,t)\in \square_{a_\gamma}$, and 
\begin{align}
\label{estcov}
1-a_\gamma C < \frac{1}{(1-t\kappa(s))^2}< 1+a_\gamma C, \quad \text{and}\quad
\lvert P(s,t)\rvert \leq C.
\end{align}
Thus we can write for all $\phi \in D(p^{\gamma,N,\tilde V})$, 
$p^{\gamma,N,\tilde V}(\phi,\phi) \geq h^{\gamma,N,\xi_\gamma}(\phi,\phi)$, where
\begin{align*}
h^{\gamma,N,\xi_\gamma}(\phi,\phi)&=\int_{\square_{a_\gamma}} \left( (1-a_\gamma C) \lvert \partial_s \phi\rvert^2 + \lvert \partial_t\phi\rvert^2 -\xi_\gamma(s) \lvert \phi \rvert^2 - C \lvert \phi \rvert ^2 \right)dsdt \\
&- \mathcal K \int_0^{l} \lvert \phi(s,a_\gamma)\rvert^2 ds 
-\int_0^{l} (\frac{\kappa(s)}{2}+\gamma) \lvert \phi (s,0) \rvert^2 ds,
\end{align*}
where $D(h^{\gamma,N,\xi_\gamma})\coloneqq H^1(\square_{a_\gamma})$ and $\xi_\gamma(s)\coloneqq c\gamma^{2\beta} \xi(s)$.
We now can conclude by the min-max principle that, for all $n\in\mathbb N$,
\begin{align}
\label{ineg5}
E_n(P^{\gamma,N,\tilde V})\geq E_n(H^{\gamma,N,\xi_\gamma}).
\end{align}
In order to control the potential $\xi_\gamma$, we have to introduce some new sesquilinear forms.
We define
\begin{align*}
h^{\gamma,N,1}(\phi,\phi)=&\int_{0}^{s_{\gamma}}\int_0^{a_\gamma} \left( (1-a_\gamma C) \lvert \partial_s \phi\rvert^2 + \lvert \partial_t\phi\rvert^2 -c\gamma^{2\beta} \lvert \phi \rvert^2 -C \lvert \phi \rvert ^2 \right)dsdt \\
&- \mathcal K \int_{0}^{s_\gamma} \lvert \phi(s,a_\gamma)\rvert^2 ds
-\int_{0}^{s_\gamma} (\frac{\kappa(s)}{2}+\gamma) \lvert \phi (s,0) \rvert^2 ds, \quad \phi\in H^1\left((0,s_\gamma)\times (0,a_\gamma)\right),
\end{align*}

\begin{align*}
h^{\gamma,N,2}(\phi,\phi)&=\int_{s_{\gamma}}^{l-s_\gamma}\int_{0}^{a_\gamma} \left( (1-a_\gamma C) \lvert \partial_s \phi\rvert^2 + \lvert \partial_t\phi\rvert^2  -C\lvert \phi \rvert ^2 \right)dsdt \\
&- \mathcal K \int_{s_\gamma}^{l-s_\gamma} \lvert \phi(s,a_\gamma)\rvert^2 ds 
-\int_{s_\gamma}^{l-s_\gamma} (\frac{\kappa(s)}{2}+\gamma) \lvert \phi (s,0) \rvert^2 ds,\quad \phi \in H^1\left((s_\gamma,l-s_\gamma)\times (0,a_\gamma)\right),
\end{align*}
 and 
\begin{align*}
h^{\gamma,N,3}(\phi,\phi)=&\int_{l-s_\gamma}^{l}\int_0^{a_\gamma} \left( (1-a_\gamma C) \lvert \partial_s \phi\rvert^2 + \lvert \partial_t\phi\rvert^2 -c\gamma^{2\beta} \lvert \phi \rvert^2 -C \lvert \phi \rvert ^2 \right)dsdt \\
&- \mathcal  K \int_{l-s_\gamma}^{l} \lvert \phi(s,a_\gamma)\rvert^2 ds
-\int_{l-s_\gamma}^{l} (\frac{\kappa(s)}{2}+\gamma) \lvert \phi (s,0) \rvert^2 ds, \quad \phi \in H^1\left((l-s_\gamma,l)\times (0,a_\gamma)\right).
\end{align*}

Using the min-max principle we obtain the following inequality for all $n\in\mathbb N$,
\begin{align}
\label{ineg7}
E_n(H^{\gamma,N,\xi_\gamma}) \geq E_n\left(\bigoplus_{i=1}^3 H^{\gamma,N,i}\right).
\end{align}

Let us introduce, for simplicity, the more general sesquilinear form 
\begin{align*}
h^{\gamma,N}(\phi,\phi)=&\int_{0}^{L}\int_0^{a_\gamma} \left( (1-a_\gamma C) \lvert \partial_s \phi\rvert^2 + \lvert \partial_t\phi\rvert^2 -c_\gamma\lvert \phi \rvert^2 \right)dsdt \\
&- \mathcal K \int_{0}^{L} \lvert \phi(s,a_\gamma)\rvert^2 ds 
-\int_{0}^{L} (\frac{g(s)}{2}+\gamma) \lvert \phi (s,0) \rvert^2 ds, \quad \phi \in H^1\left((0,L)\times (0,a_\gamma)\right),
\end{align*}
where $L>0$, $c_\gamma>0$ depends on $\gamma$ and will play the role of the potentials $c\gamma^{2\beta}+C$ or $C$ and $g\in C^2([0,L],\mathbb R)$.
We first prove some results on $H^{\gamma,N}$, namely estimates on the first eigenvalue and the counting function, and then apply them to the $H^{\gamma,N,i}$.
For any $K\in\mathbb N$, we denote
\[
\delta\coloneqq \frac{L}{K},\quad I_j\coloneqq(\delta (j-1), \delta j), \quad j\in\{1,...,K\},
\]
and
\[
g^+_j\coloneqq \sup_{s \in I_j} g(s).
\]
We begin defining the sesquilinear forms associated with the partition of $(0,L)$. For $j\in\{1,...,K\}$, let us consider
\begin{align*}
t^{\gamma,N}_j(\phi,\phi) &=\int_{I_j} \int_0^{a_\gamma} \left ((1-a_\gamma C) \lvert \partial_s \phi\rvert^2 +\lvert \partial_t \phi\rvert^2 -c_\gamma\lvert \phi \rvert ^2 \right)dsdt \\
&-\mathcal K\int_{I_j} \lvert \phi(s,a_\gamma)\rvert^2 ds 
-\int_{I_j} (\frac{g^+_j}{2}+\gamma) \lvert \phi(s,0)\rvert^2 ds, \quad \phi\in H^1(I_j\times(0,a_\gamma)).
\end{align*}
Clearly we have 
$D(h^{\gamma,N})\subset \bigoplus_{j=1}^K D(t^{\gamma,N}_j)$. 
Then, by the min-max principle we get for all $n\in\mathbb N$,
\begin{align}
\label{appTS1}
E_n(H^{\gamma,N}) \geq E_n\left(\bigoplus_{j=1}^K T^{\gamma,N}_j\right).
\end{align}
Let us fix $j\in\{1,...,K\}$. 
By separation of variables, it is easy to see that 
$E_n(T^{\gamma,N}_j)=E_n(\mathcal L^N \otimes 1 + 1\otimes \mathscr R_j)$. 
Here, the operator $\mathcal L^N$ acts on $L^2(0,\delta)$ as 
\[
\mathcal L^N f= -(1-a_\gamma C)f'' -c_\gamma f, \quad D(\mathcal L^N)\coloneqq\{f\in H^2(0,\delta), -f'(0)=f'(\delta)=0\}.
\]
The operator $\mathscr R_j\coloneqq \mathscr R_{\frac{g^+_j}{2}+\gamma, \mathcal K, a_\gamma}$ is defined in Section~\ref{auxop}. It acts on $L^2(0,a_\gamma)$ as $f\mapsto -f''$ with 
\begin{align*}
D(\mathscr R_j)&\coloneqq \{f\in H^2(0,a_\gamma), -f'(0)-(\frac{g^+_j}{2}+\gamma)f(0)=f'(a_\gamma)-\mathcal K f(a_\gamma)=0\}.
\end{align*}
There exists $\gamma_{1}>0$ 
such that for all $\gamma > \gamma_{1}$ 
we have $(\displaystyle \frac{g^+_j}{2}+\gamma) a_\gamma >1$ and $(\displaystyle \frac{g^+_j}{2}+\gamma) > 2\mathcal K$. 
Then, we know by Proposition~\ref{LemmaPan} that $E_1(\mathscr R_j)$ is the unique negative eigenvalue of $\mathscr R_j$ and we also have the following estimate, for all $\gamma >\gamma_{1}$,
\begin{align}
\label{estpan1}
E_1(\mathscr R_j)>-(\frac{g^+_j}{2}+\gamma)^2 -123(\frac{g^+_j}{2}+\gamma)^2 e^{-2(\frac{g^+_j}{2}+\gamma) a_\gamma} .
\end{align}
As $\inf\Spec(\mathcal L^N)=-c_\gamma$, we get
\begin{align}
\label{appTS3}
E_1(T^{\gamma, N}_j)=E_1(\mathscr R_j)-c_\gamma.
\end{align}
Using \eqref{appTS3} and \eqref{estpan1}, there exists $\gamma_{2} >\gamma_{1}$ such that for all $\gamma >\gamma_{2}$,
\[
E_1(T^{\gamma,N}_j) \geq -(\frac{g^+_j}{2} +\gamma)^2 -c_\gamma -C.
\]
For all $j\in \{1,...,K\}$, $g^+_j\leq g_{\max}\coloneqq \max_{s\in [0,L]}g(s) $ and , by \eqref{appTS1} we can conclude that  for all $\gamma > \gamma_{2}$,
\[
E_1(H^{\gamma,N})\geq -\gamma^2 -\gamma g_{\max} -c_\gamma - C.
\]
Notice that it is easy to apply the previous result to the operators 
 $H^{\gamma,N,i}$ by making a translation and considering, for $i=2$, $g(s)\coloneqq \kappa(s+s_\gamma)$ and for $i=3$, $g(s)\coloneqq \kappa(s+(l-s_\gamma))$.
Thus, for $\gamma>\gamma_2$ we have
\[
E_1(H^{\gamma,N,i})\geq -\gamma^2-\gamma\kappa_{\max} -c\gamma^{2\beta} -C ,\quad i=1,3,
\] 
and
\[
E_1(H^{\gamma,N,2})\geq -\gamma^2-\gamma\kappa_{\max} -C .
\]
There exists $\gamma_{3}>\gamma_{2}$ such that for all $\gamma>\gamma_{3}$ we have, thanks to \eqref{ineg7},
\[
E_1(H^{\gamma,N,\xi_\gamma}) \geq -\gamma^2-C\gamma^{2\beta},
\]
as $\beta\in(1/2,1)$. Finally, this concludes the proof of \eqref{interpb1} thanks to \eqref{ineg3.1} and \eqref{ineg5}.

We now focus on the eigenvalue counting function. 
Let $E\in(-1,0)$ be fixed. Thanks to the fact that $E_1(\mathscr R_j)$ is the unique negative eigenvalue of the operator $\mathscr R_j$ as $\gamma >\gamma_1$ and using estimate \eqref{estpan1} one can write
\[
\mathcal N(T^{\gamma,N}_j, E\gamma^2) \leq \frac{\delta}{\pi\sqrt{1-a_\gamma C}} \sqrt{(E+1)\gamma^2+g^+_j\gamma+c_\gamma+C}+1.
\]
Thus, summing on $k\in\{1,...,K\}$ and using \eqref{appTS1} we obtain for $\gamma$ large enough,
\[
\mathcal N(H^{\gamma,N},E\gamma^2)\leq \gamma\frac{L\sqrt{E+1}}{\pi \sqrt{1-a_\gamma C}} + L C c_\gamma \gamma^{-1} +K.
\]
Recall that $a_\gamma\coloneqq \gamma^{-1+\epsilon}$ with $\epsilon\in(0,1-\beta)$. We can write $(1-a_\gamma C)^{-1/2}= 1+ \displaystyle\frac{1}{2} C \gamma^{-1+\epsilon} +O(\gamma^{-2+2\epsilon})$ as $\gamma\to +\infty$. 
Then,
\[
\mathcal N(H^{\gamma,N},E\gamma^2) \leq \gamma \frac{L \sqrt{E+1}}{\pi} + LC\gamma^{\epsilon} +L C c_\gamma \gamma^{-1} +K.
\]
We can now apply this previous result to the operators $H^{\gamma,N,i}$ with $c_\gamma=c\gamma^{2\beta}+C$ and $L=b\gamma^{-\beta}$ for $i=1,3$ and $c_\gamma=C$ and $L=l-2b\gamma^{-\beta}$ for $i=2$. We finally obtain, choosing $K\in[\gamma^{\epsilon},2\gamma^{\epsilon}]\cap\mathbb N$,
\[
\mathcal N(H^\gamma,N,\xi_\gamma) \leq \gamma \frac{l\sqrt{E+1}}{\pi}+ O(\gamma^\epsilon),\quad \gamma\to+\infty,
\]
with $\epsilon <1-\beta$.
This finishes the proof of \eqref{interpb2} thanks to 
\eqref{ineg3.1} and \eqref{ineg5}.

Let us prove \eqref{interpb3}. Let $\lambda\in\mathbb R$ be fixed. There exists $\gamma_4>\gamma_1$ such that for all $\gamma >\gamma_4$ we have $-\gamma^2+\lambda\gamma<0$. We can write, using again \eqref{estpan1},
\[
\mathcal N(T^{\gamma,N}_j,-\gamma^2+\lambda \gamma)\leq \frac{\delta}{\pi\sqrt{1-a_\gamma C}} \sqrt{\gamma(g^+_j+\lambda)_+} + C\delta c_\gamma \gamma^{-1/2} +1. 
\]
We can sum the inequalities on $j\in\{1,...,K\}$ and apply it to the operators $H^{\gamma,N,i}$. We obtain, for $i=1,3$,
\[
\mathcal N(H^{\gamma,N,i},-\gamma^2+\lambda\gamma) \leq  C\gamma^{1/2-\beta} K+ C \gamma^{\beta-1/2} +K,
\]
and 
\[
\mathcal N(H^{\gamma,N,2},-\gamma^2+\lambda\gamma) \leq \frac{\sqrt{\gamma}}{\pi\sqrt{1-a_\gamma C}} \frac{l-2b\gamma^{-\beta}}{K} \sum_{j=1}^K \sqrt{(g^+_j+\lambda)_+} + C\gamma^{-1/2} +K,
\]
where $g(s)= \kappa(s+s_\gamma)$.
Notice that $(0,L)\ni s \mapsto \sqrt{(g+\lambda)_+}$ is Lipschitz, thus we can use the convergence of Riemann sum to have
\[
\int_{0}^L \sqrt{(g+\lambda)_+} ds = \frac{L}{K} \sum_{j=1}^K \sqrt{(g^+_j+\lambda)_+}+ O(\frac{1}{K}),\quad K\to +\infty.
\]
Let us choose $K\in [\gamma^{\beta-1/2},2\gamma^{\beta-1/2}]\cap \mathbb N$. Then for $\gamma$ large enough,
\begin{align*}
\mathcal N(H^{\gamma,N,2},-\gamma^2+\lambda \gamma) \leq \frac{\sqrt{\gamma}}{\pi} \int_{s_\gamma}^{l-s_\gamma} \sqrt{(\kappa(s)+\lambda)_+} ds + C \gamma^{\beta-1/2}.
\end{align*}
In addition we have 
\[
\int_0^l \sqrt{(\kappa(s)+\lambda)_+}ds =  \int_{s_\gamma}^{l-s_\gamma} \sqrt{(\kappa(s)+\lambda)_+}ds +O(\gamma^{-\beta}), \quad \gamma\to +\infty,
\]
as $\sqrt{(\kappa(s)+\lambda)_+} \leq \sqrt{(\kappa_{\max}+\lambda)_+}$ for all $s\in(0,l)$. 
Finally, 
\begin{align}
\label{ineg10}
\mathcal N(H^{\gamma,N,\xi_\gamma},-\gamma^2+\lambda \gamma)\leq  
\frac{\sqrt{\gamma}}{\pi} \int_0^l \sqrt{(\kappa(s)+\lambda)_+}ds+O(\gamma^{\beta-1/2}),\quad \gamma\to +\infty.
\end{align}
We conclude the proof of \eqref{interpb3} thanks to \eqref{ineg3.1} and \eqref{ineg5}.
\end{proof}

\begin{proof}[Proof of \eqref{lbvpregpart}, \eqref{upNregpart} and \eqref{upNregpartbis}]

We introduce $\tilde \Omega_{0}\coloneqq \Omega_0 \backslash \bigcup_{k=1}^M \overline{\tilde\Omega^k_{a_\gamma}}$ and the closed sesquilinear forms 
\[
\tilde q^{\gamma,N,V}_k(\phi,\phi)=\int_{\tilde \Omega^k_{a_\gamma}} \left(\lvert \nabla \phi \rvert^2 -V(x) \lvert \phi\rvert^2 \right) dx -\gamma \int_{\Gamma_k\cap \tilde \Omega^k_{a_\gamma}} \lvert \phi\rvert^2 ds,
\]
with $D(\tilde q^{\gamma,N,V}_k)\coloneqq \{\phi \in H^1(\tilde\Omega^k_{a_\gamma}), \phi(x)=0 \text{ for } x \in (\partial  \tilde\Omega^k_{a_\gamma} \cap \partial \Omega_0)\backslash \Gamma_0\}$
and
\[
\tilde q_{0}^{N,V}(\phi,\phi)= \int_{\tilde \Omega_{0}} \left(\lvert \nabla \phi\rvert^2-V(x) \lvert \phi\rvert^2  \right)dx,
\]
with $D(\tilde q_{0}^{N,V})\coloneqq \{ \phi\in H^1(\tilde \Omega_{0}),  \phi(x)=0 \text{ for } x\in \partial \tilde \Omega_{0}\cap\partial\Omega_0\}$.
Noticing that 
$D(q^{\gamma,V}_0)\subset \bigoplus_{k=1}^M D(\tilde q^{\gamma,N,V}_k) \bigoplus D(\tilde q^{N,V}_{0})$ and thanks to \eqref{empty}, 
we can use the min-max principle and immediately obtain, 
for all $n\in\mathbb N$,
\begin{align}
\label{ineg1.1}
E_n(Q^{\gamma,V}_0) &\geq E_n\left((\bigoplus_{k=1}^M \tilde Q^{\gamma,N,V}_k )\oplus \tilde Q^{N,V}_{0}\right).
\end{align}
Notice that, by \eqref{majV}, 
$\mathcal N(\tilde Q^{N,V}_{0}, E\gamma^2+\lambda\gamma)\leq \mathcal N(\tilde Q^{N}_{0}, E\gamma^2 +\lambda \gamma + c \gamma^{2\beta})$, 
where $\tilde Q^{N}_{0}$
 is the unique self-adjoint operator associated with the sesquilinear form 
\[
\tilde q^N_{0}(\phi,\phi)=\int_{\tilde \Omega_{0}} \lvert \nabla \phi\rvert^2 dx, \quad 
\phi \in D(\tilde q^N_{0})\coloneqq D(\tilde q^{N,V}_{0}).
\]
The operator $\tilde Q^N_{0}$ is positive.
As $\beta<1$, there exists $\gamma_0>0$ such that, for all $\gamma >\gamma_0$ we have $\tilde E\gamma^2+\lambda\gamma+c\gamma^{2\beta}<0$, with $\tilde E\in[-1,0)$ and $\lambda\in\mathbb R$. Then, for all $\gamma >\gamma_0$, 
$\mathcal N(\tilde Q^{N}_{0}, \tilde E\gamma^2 +\lambda \gamma + c \gamma^{2\beta})=0$ and by \eqref{ineg1.1},
\begin{align}
\label{ineg2.1}
\mathcal N(Q^{\gamma,V}_0, \tilde E\gamma ^2 +\lambda \gamma) &\leq \sum_{k=1}^M \mathcal N (\tilde Q^{\gamma,N,V}_k, \tilde E\gamma^2+\lambda\gamma ).
\end{align}
 As $\tilde \Omega^k _{a_\gamma}\subset \Omega^k_{a_\gamma}$, extending $\phi \in D(\tilde q^{\gamma,N,V}_k)$ by zero we obtain, by the min-max principle and for all $n\in\mathbb N$,
\begin{align}
\label{ineg2.3}
E_n(\tilde Q^{\gamma,N,V}_k) \geq E_n(Q^{\gamma,N,V}_k).
\end{align}
We are now able to conclude.
On one hand, noticing that $E_1(\tilde Q^{N,V}_0)\geq -c\gamma^{2\beta}$ and by \eqref{interpb1} we have for $\gamma$ large enough,
\[
E_1\left(\bigoplus_{k=1}^M \tilde Q^{\gamma,N,V}_k\bigoplus\tilde Q^{N,V}_0\right)\geq -\gamma^2-C\gamma^{2\beta},
\]
which finishes the proof of \eqref{lbvpregpart} thanks to \eqref{ineg1.1}.
On the other hand, gathering \eqref{interpb2}, \eqref{interpb3}, \eqref{ineg2.1} and \eqref{ineg2.3} finishes the proof of \eqref{upNregpart} and \eqref{upNregpartbis}.
\end{proof}

\subsubsection{Proof of \eqref{lbNregpart} and \eqref{lbNregpartbis}}\label{Dirichlet}

We still follow the ideas of the proof of \cite{Pan}, but this proof is easier than the previous one as there is no potential $V$ in the sesquilinear form to control.

Let us introduce the new sesquilinear forms
\[
q^{\gamma,D}_k(\phi,\phi)=\int_{\tilde \Omega^k_{a_\gamma}} \lvert \nabla \phi \rvert^2 dx -\gamma \int_{\Gamma_k\cap \tilde \Omega^k_{a_\gamma}} \lvert \phi \rvert^2 ds,
\]
where $D(q^{\gamma,D}_k)\coloneqq\{ \phi \in H^1(\tilde \Omega^k_{a_\gamma}), \phi(x)=0 \text{ for } x \in \partial \tilde \Omega^k_{a_\gamma} \backslash \Gamma_k\}$, $\tilde \Omega^k_{a_\gamma}$ being defined in \eqref{truncstrip} and
\[
\tilde q^D_{0}(\phi,\phi)=\int_{\tilde \Omega_{0}} \lvert \nabla \phi\rvert^2 dx,\quad \phi \in H^1_0(\tilde \Omega_0),
\] 
with $\tilde \Omega_0\coloneqq \Omega_0\backslash \bigcup_{k=1}^M \overline {\tilde \Omega^k_{a_\gamma}}$.

Even if the 
 strategy of the proofs will be same as in Section~\ref{Neumann}, we have to work with $\tilde \Omega^k_{a_\gamma}$ instead of $\Omega^k_{a_\gamma}$, as the trick we used previously does not apply here. 

\begin{Remarques}
Let $k$ be such that $\Gamma_k$ links two convex corners. Then, by definition of $\tilde \Omega^k_{a_\gamma}$, there exists $b>0$  such that 
$
\tilde \Omega^k_{a_\gamma}=\varphi_k\left(\tilde \square^k_{a_\gamma}\right),
$
with 
\[
\tilde \square^k_{a_\gamma}\coloneqq (b\gamma^{-\beta},l_k-b \gamma^{-\beta}) \times (0,a_\gamma).
\]
 In the following we denote $s_\gamma\coloneqq b\gamma^{-\beta}$.
Notice that it is sufficient to study the case where $\Gamma_k$ links two convex corners. Indeed, in the two other cases (namely $\Gamma_k$ links one convex corner and one non-convex corner or two non-convex corners) we have $\tilde \square ^k_{a_\gamma}\coloneqq(s_\gamma,l)\times (0,a_\gamma)$ or $\tilde \square ^k_{a_\gamma} \coloneqq (0,l)\times (0,a_\gamma)$ and the study is then the same.

\end{Remarques}

\begin{proposition}\label{minND}
For all $k\in\{1,...,M\}$, for all $E\in(-1,0)$ and $\lambda \in\mathbb R$, one has for $\gamma>0$ large enough,
\begin{align}
\label{interpbD}
\mathcal N(Q^{\gamma,D}_k, E\gamma^2) \geq \gamma \frac{l_k \sqrt{E+1}}{\pi} - C \gamma^{1-\beta},
\end{align}
and 
\begin{align}
\label{interpbD2}
\mathcal N(Q^{\gamma,D}_k, -\gamma^2+\lambda\gamma) \geq \frac{\sqrt{\gamma}}{\pi} \int_{s_\gamma}^{l_k-s_\gamma} \sqrt{\kappa_k(s)+\lambda)_+}ds- C.
\end{align}
\end{proposition}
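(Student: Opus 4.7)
The strategy mirrors the upper-bound argument in Proposition~\ref{interpb}, but run in the opposite direction: to bound $\mathcal N(Q^{\gamma,D}_k,\cdot)$ from below one needs a comparison operator whose form \emph{majorizes} that of $Q^{\gamma,D}_k$ and whose spectrum can be computed by separation of variables. The plan is to transport $q^{\gamma,D}_k$ to the rectangle $\tilde\square^k_{a_\gamma}=(s_\gamma,l_k-s_\gamma)\times(0,a_\gamma)$ via the diffeomorphism $\varphi_k$ composed with the unitary transform $U_{a_\gamma}$ already used in the proof of Proposition~\ref{interpb}. This yields a unitarily equivalent form $p^{\gamma,D}$ on $\tilde\square^k_{a_\gamma}$ with Dirichlet conditions on three sides and Robin on $\{t=0\}$, whose coefficients are the same as in Section~\ref{Neumann} except for the vanishing of the top-boundary term.

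Using $\frac{1}{(1-t\kappa_k(s))^2}\leq 1+a_\gamma C$ and $|P(s,t)|\leq C$ (cf.\ \eqref{estcov}) I would introduce the simpler, constant-coefficient form
\[
\tilde h^{\gamma,D}(\phi,\phi)=\int_{\tilde\square^k_{a_\gamma}}\!\bigl((1+a_\gamma C)|\partial_s\phi|^2+|\partial_t\phi|^2+C|\phi|^2\bigr)ds\,dt-\int_{s_\gamma}^{l_k-s_\gamma}\!\bigl(\tfrac{\kappa_k(s)}{2}+\gamma\bigr)|\phi(s,0)|^2\,ds,
\]
with the same Dirichlet/Robin boundary data. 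Since $p^{\gamma,D}(\phi,\phi)\leq \tilde h^{\gamma,D}(\phi,\phi)$ on the common form domain, the min-max principle gives $\mathcal N(Q^{\gamma,D}_k,\mu)\geq \mathcal N(\tilde H^{\gamma,D},\mu)$. Then I partition $(s_\gamma,l_k-s_\gamma)$ into $K$ intervals $I_j$ of equal length $\delta=(l_k-2s_\gamma)/K$ and impose additional Dirichlet conditions on the interior vertical segments $\{s=s_j\}\times(0,a_\gamma)$: this restriction only raises eigenvalues, hence lowers the counting function, and on each cell I further replace $\kappa_k(s)$ by $\kappa^-_j\coloneqq\inf_{I_j}\kappa_k$, which upper-bounds the boundary integrand since $-(\kappa_k/2+\gamma)\leq -(\kappa^-_j/2+\gamma)$ for $\gamma$ large.

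The resulting model $\tilde T^{\gamma,D,-}_j$ separates, with spectrum $\bigl\{(1+a_\gamma C)(n\pi/\delta)^2+C+E_m(\mathscr D_{\gamma+\kappa^-_j/2,\,a_\gamma})\bigr\}_{n,m\geq 1}$, where $\mathscr D$ is the one-dimensional operator of Proposition~\ref{LemmaHP}. Because $\gamma a_\gamma=\gamma^\epsilon\to\infty$, that proposition shows $E_1(\mathscr D_{\gamma+\kappa^-_j/2,a_\gamma})=-(\gamma+\kappa^-_j/2)^2+o(1)$ and is the unique negative eigenvalue, so below the thresholds $E\gamma^2$ or $-\gamma^2+\lambda\gamma$ only the $m=1$ branch contributes. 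Counting the Dirichlet eigenvalues $(1+a_\gamma C)(n\pi/\delta)^2+C$ that lie below the corresponding shifted threshold and summing in $j$, using $(1+a_\gamma C)^{-1/2}=1+O(a_\gamma)$, yields lower bounds of the form asserted.

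The main technical chore is balancing error sources: the floor $-1$ per cell contributes $-K$; the metric corrections produce an $O(\gamma^\epsilon)$ term through the factor $(1+a_\gamma C)^{-1/2}$; the truncation $s_\gamma=b\gamma^{-\beta}$ costs $O(\gamma^{1-\beta})$ for \eqref{interpbD}; and for \eqref{interpbD2} the Riemann-sum approximation of $\int\sqrt{(\kappa_k+\lambda)_+}$ by $\sum |I_j|\sqrt{(\kappa^-_j+\lambda)_+}$ contributes $O(K^{-1})$ since $s\mapsto\sqrt{(\kappa_k(s)+\lambda)_+}$ is Lipschitz (as in Proposition~\ref{interpb}). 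Choosing $K\in[\gamma^\epsilon,2\gamma^\epsilon]$ with $\epsilon<1-\beta$ gives the $O(\gamma^{1-\beta})$ remainder in \eqref{interpbD}, while $K\in[\gamma^{\beta-1/2},2\gamma^{\beta-1/2}]$ gives the $O(1)$ remainder in \eqref{interpbD2}. The only subtlety will be uniformity across the cases where $\Gamma_k$ meets one or two non-convex corners; as already remarked, the rectangle $\tilde\square^k_{a_\gamma}$ then extends to $s=0$ or $s=l_k$, and the proof goes through verbatim with one or both of the $s_\gamma$-losses simply absent.
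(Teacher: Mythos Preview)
Your outline matches the paper's proof step for step: straighten via $\varphi_k$ and the unitary $\tilde U_{a_\gamma}$ to obtain $p^{\gamma,D}$ on $\tilde\square^k_{a_\gamma}$, majorize by the constant-coefficient form $h^{\gamma,D}$ using \eqref{estcov}, partition $(s_\gamma,l_k-s_\gamma)$ into $K$ equal subintervals with added Dirichlet walls, freeze $\kappa_k$ at its cell infimum $\kappa^-_j$, then separate variables and invoke Proposition~\ref{LemmaHP} for the transverse operator $\mathscr D_j$. For \eqref{interpbD} this works exactly as you describe; indeed one may even keep $K$ bounded, since the leading per-cell term $\gamma\delta\sqrt{E+1}/\pi$ does not involve $\kappa^-_j$ and summing gives $\gamma L_\gamma\sqrt{E+1}/\pi$ directly.

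There is, however, a slip in your error accounting for \eqref{interpbD2}. With $K\sim\gamma^{\beta-1/2}$ the floor loss is $-K\sim-\gamma^{\beta-1/2}$ and the Riemann-sum error is $\sqrt\gamma\cdot O(K^{-1})\sim\gamma^{1-\beta}$; neither is $O(1)$. The paper instead takes $K\in[\gamma,2\gamma]$, which makes the Riemann-sum contribution $O(\gamma^{-1/2})$, but its displayed ``$-C$'' after summing over $j$ tacitly suppresses a $-K$ term. Taken literally, the partitioning scheme cannot produce a bounded remainder for any choice of $K$ (balancing the two errors gives at best $O(\gamma^{1/4})$). This is immaterial for the downstream application, since Theorem~\ref{weylcurvpol} only claims an $O(\gamma^{1/4})$ remainder in \eqref{weylcurvpol2}, but you should be aware that the literal $-C$ of \eqref{interpbD2} is sharper than what this argument actually delivers.
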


\begin{proof}
In the following we omit the indices $k$. Let $E\in(-1,0)$ and $\lambda\in\mathbb R$.

We want to perform a change of variables in order to work with $\tilde \square_{a_\gamma}$. 
As in the previous section, we first introduce a unitary transform. 
Define $\tilde U_{a_\gamma} : L^2(\tilde \Omega_{a_\gamma})\to L^2(\tilde \square_{a_\gamma})$, 
$\tilde U_{a_\gamma}\phi(s,t)\coloneqq \sqrt{1-\kappa(s)}\phi\circ\varphi(s,t)$. 
Then, $\tilde U_{a_\gamma} \left(D(q^{\gamma,D}) \right)=\{\phi \in H^1(\tilde \square_{a_\gamma}), \phi(s_\gamma, t)=\phi(l-s_\gamma,t)=\phi(s,a_\gamma)=0\}$. 
It is easy to prove that, after a using integration by parts, $q^{\gamma,D}(\phi,\phi)=p^{\gamma,D}(\tilde U_{a_\gamma}\phi,\tilde U_{a_\gamma})$ with $D(q^{\gamma,D})\coloneqq \tilde U_{a_\gamma} \left(D(q^{\gamma,D}) \right)$ and $p^{\gamma,D}$ is given by the following expression :
\begin{align*}
p^{\gamma,D}(\phi,\phi)=&\int_{\tilde \square_{a_\gamma}} ( \frac{1}{(1-t\kappa(s))^2} \lvert \partial_s \phi \rvert^2 +\lvert \partial_t \phi \rvert^2 -P(s,t)\lvert \phi\rvert^2 )dsdt \\
&-\int_{s_\gamma}^{l-s_\gamma} (\frac{\kappa(s)}{2}+\gamma) \lvert \phi(s,0)\rvert^2ds, 
\end{align*}
where the potential $P$ is given by the expression in \eqref{potP}, namely
\[
P(s,t)= \frac{\kappa^2(s)}{4(1-t\kappa(s))^2} + \frac{t \kappa''(s)}{2(1-t\kappa(s))^3} + \frac{5 t^2 (\kappa'(s))^2}{4(1-t\kappa(s))^4}.
\]
As $\tilde U_{a_\gamma}$ is a unitary map we have, for all $n\in\mathbb N$,
\begin{align}
\label{inequ2}
E_n(Q^{\gamma,D})=E_n(P^{\gamma,D}).
\end{align}
We use the estimates mentioned in the proof of Proposition~\ref{interpb} to simplify the study. Recall that, as $\kappa \in C^2([0,l],\mathbb R^2)$ there exists $C>0$ such that for all $(s,t)\in \tilde \square_{a_\gamma}$
\[
1-a_\gamma C<\frac{1}{(1-t\kappa(s))^2} <1+ a_\gamma C, \quad \lvert P(s,t)\rvert \leq C.
\]
Thus, we can write for all $\phi\in D(p^{\gamma,D})$, $p^{\gamma,D}(\phi,\phi)\leq h^{\gamma,D}(\phi,\phi)$, where 
\begin{align*}
h^{\gamma,D}(\phi,\phi)&=\int_{\tilde\square_{a_\gamma}} ((1+a_\gamma C) \lvert \partial_s\phi\rvert^2+\lvert \partial_t\phi\rvert^2 +C \lvert \phi \rvert^2 ) ds dt \\
&- \int_{s_\gamma}^{l-s_\gamma} (\frac{\kappa(s)}{2}+\gamma) \lvert \phi(s,0)\rvert^2 ds, \quad \phi \in D(h^{\gamma,D})\coloneqq D(p^{\gamma,D}).
\end{align*}
 We obtain, by the min-max principle,
\begin{align}
\label{inequ4}
E_n(P^{\gamma,D})\leq E_n(H^{\gamma,D}).
\end{align}
Let us introduce $L_\gamma\coloneqq l-2s_\gamma$ 
and $\tilde\kappa(s)\coloneqq \kappa(s+s_\gamma)$, for $s\in(0,L_\gamma)$. 
Then, $H^{\gamma,D}$ is unitarily equivalent to the operator $\tilde H^{\gamma,D}$ 
acting on $L^2((0,L_\gamma)\times(0,a_\gamma))$ 
and defined as the unique self-adjoint operator associated with the sesquilinear form 
\begin{align*}
\tilde h^{\gamma,D}(\phi,\phi)&=\int_{0}^{L_\gamma}\int_0^{a_\gamma} ((1+a_\gamma C) \lvert \partial_s\phi\rvert^2+\lvert \partial_t\phi\rvert^2 +C \lvert \phi \rvert^2 ) ds dt \\
&- \int_{0}^{L_\gamma} (\frac{\tilde \kappa(s)}{2}+\gamma) \lvert \phi(s,0)\rvert^2 ds,
\end{align*}
where $D(\tilde h^{\gamma,D})\coloneqq \{\phi \in H^1((0,L_\gamma)\times (0,a^\gamma)), \phi(0,t)=\phi(L_\gamma,t)=\phi(s,a_\gamma)=0\}$. 
By the min-max principle we obtain the equality
 \begin{align}
\label{inequ5}
E_n(H^{\gamma,D})=E_n(\tilde H^{\gamma,D}).
\end{align}
Let us now introduce a partition of $(0,L_\gamma)$. For any $K\in\mathbb N$, we denote
\[
\delta\coloneqq \frac{L_\gamma}{K}, \quad I_j\coloneqq(\delta(j-1),\delta j), \quad j\in\{1,...,K\},
\]
and,
\[
\tilde \kappa^-_j\coloneqq\inf_{s\in I_j} \tilde \kappa(s).
\]
We define the new sesquilinear forms adapted to this partition,
\begin{align*}
t_j^{\gamma,D}(\phi,\phi)&=\int_{I_j}\int_{0}^{a_\gamma}((1+a_\gamma C) \lvert \partial_s\phi\rvert^2 +\lvert \partial_t\phi\rvert^2 +C\lvert \phi \rvert^2)  ds dt \\
&-\int_{I_j}(\frac{\tilde \kappa^-_j}{2} +\gamma) \lvert \phi(s,0)\rvert^2 ds,
\end{align*}
where $D(t_j^{\gamma,D})\coloneqq \{\phi \in H^1(I_j \times (0,a_\gamma)), \phi((j-1)\delta,t)=\phi(\delta j,t)=\phi(s,a_\gamma)=0\}$. 
Clearly we have 
$\bigoplus_{j=1}^K D(t_j^{\gamma,D}) \subset D(\tilde h^{\gamma,D})$,
and by the min-max principle we get 
\begin{align}
\label{inequ6}
E_n(\tilde H^{\gamma,D})\leq E_n(\oplus_{j=1}^K T_j^{\gamma,D}).
\end{align}
Let us fix $j\in\{1,...,K\}$. 
It is easy to see that, by separation of variables, 
$E_n(T^{\gamma,D}_j)=E_n(\mathcal L^D\otimes 1 + 1\otimes \mathscr D_j)$.
 Here, the operator $\mathcal L^D$ acts on $L^2(0,\delta)$ as
\[
\mathcal L^D f = -(1+a_\gamma C) f''+ C f, \quad D(\mathcal L^D)\coloneqq H^2(0,\delta)\cap H^1_0(0,\delta).
\]
The operator $\mathscr D_j\coloneqq \mathscr D_{\frac{\tilde \kappa^-_j}{2}+\gamma,a_\gamma}$ defined in Section~\ref{auxop} acts on $L^2(0,a_\gamma)$ as $f\mapsto -f''$ with
\[
D(\mathscr D_j)=\{f\in H^2(0,a_\gamma), -f'(0)-(\frac{\tilde \kappa^-_j}{2}+\gamma)f(0)=f(a_\gamma)=0\}.
\]
There exists $\gamma_1>0$ such that, for all $\gamma >\gamma_1$ we have $(\displaystyle\frac{\tilde \kappa^-_j}{2}+\gamma)a_{\gamma} >1$. Then, using Proposition~\ref{LemmaHP} we know that $E_1(\mathscr D_j)$ is the unique negative eigenvalue of $\mathscr D_j$ and we have the following estimate, for all $\gamma>\gamma_1$,
\begin{align}
\label{estpan}
 E_1(\mathscr D_j)\leq -(\frac{\tilde \kappa^-_j}{2}+\gamma)^2+4(\frac{\tilde \kappa^-_j}{2}+\gamma)^2 e^{-2(\frac{\tilde \kappa^-_j}{2}+\gamma) a_\gamma}.
\end{align}
Let $E\in(-1,0)$ be fixed. For all $\gamma>\gamma_1$ one can write, using estimate \eqref{estpan},
\[
\mathcal N(T^{\gamma,D}_j,E\gamma^2)\geq \gamma \frac{\delta\sqrt{E+1}}{\pi \sqrt{1+a_\gamma C}}-C.
\]
 We immediately have summing on $j\in\{1,...,K\}$,
\[
\mathcal N(H^{\gamma,D}, E\gamma^2) \geq \gamma \frac{L_\gamma\sqrt{E+1}}{\pi\sqrt{1+a_\gamma C}}  -C.
\]
Recall that $a_\gamma\coloneqq \gamma^{-1+\epsilon}$, and then $(1+a_\gamma C)^{-1/2}= 1-\displaystyle\frac{1}{2} C \gamma^{-1+\epsilon} +O(\gamma^{-2+2\epsilon})$, $\gamma \to +\infty$. Moreover, $L_\gamma\coloneqq l-2b\gamma^{-\beta}$. Thus we have
\[
\mathcal N(H^{\gamma,D},E\gamma^2)\geq \gamma \frac{l \sqrt{E+1}}{\pi} -C \gamma^{\epsilon},
\]
with $\epsilon\in (0,1-\beta)$. 
This concludes the proof of \eqref{interpbD} thanks to \eqref{inequ2} and \eqref{inequ4}.

Let $\lambda\in\mathbb R$. We have 
\[
\mathcal N(H^{\gamma,D}, -\gamma^2+\lambda \gamma) \geq \sqrt{\gamma} \frac{1}{\pi \sqrt{1-a_\gamma C}} \frac{L_\gamma}{K}  \sum_{j=1}^K \sqrt{(\tilde \kappa^-_j +\lambda)_+} -C.
\]
Again, we can use the convergence of Riemman sum of the Lipschitz function $s\mapsto \sqrt{(\tilde \kappa(s)+\lambda)_+}$ to write
\[
\int_{0}^{L_\gamma} \sqrt{\tilde \kappa(s)+ \lambda)_+} ds = \frac{L_\gamma}{K} \sum_{j=1}^K  \sqrt{(\tilde \kappa^-_j+\lambda)_+} +O(\frac{1}{K}),\quad K\to +\infty.
\]
This concludes the proof of \eqref{interpbD2} taking $K\in[\gamma,2\gamma]\cap \mathbb N$.
\end{proof}

\begin{proof}[Proof of \eqref{lbNregpart} and \eqref{lbNregpartbis}]
Noticing that if $\phi \in \bigoplus_{k=1}^M D(q^{\gamma,D}_k)\bigoplus D(\tilde q^D_{0})$ then $\phi \in D(q^\gamma_0)$, we obtain by the min-max principle for all $n\in\mathbb N$,
\begin{align}
\label{inequ1}
E_n(Q^\gamma_0)\leq E_n(\oplus_{k=1}^M Q_k^{\gamma,D}\oplus \tilde Q^D_0).
\end{align}
As $s_\gamma=b\gamma^{-\beta}$ we have,
\[
\int_{s_\gamma}^{l_k-s_\gamma} \sqrt{(\kappa_k(s)+\lambda)_+} ds = \int_{0}^{l_k} \sqrt{(\kappa_k(s)+\lambda)_+} ds +O(\gamma^{-\beta}),\quad  \gamma \to +\infty.
\]
Combining it with Proposition~\ref{minND} and \eqref{inequ1} finishes the proof.
\end{proof}

\subsection{Asymptotic behavior of the first eigenvalues on curvilinear polygons}\label{sectascurvpol}

In this section, we prove Theorem~\ref{Intro_Thasvp} for general curvilinear polygons.
\begin{theorem}\label{asexpcurv}
Let $\Omega\subset\mathbb R^2$ be a curvilinear polygon. There exists $C>0$ such that, 
for all $n\in\{1,..., \mathcal N^\oplus\}$ and for $\gamma$ large enough we have,
\[
\lvert E_n(Q^\gamma)-\gamma^2E_n(T^\oplus)\rvert \leq C\gamma^{\frac{4}{3}}.
\]
\end{theorem}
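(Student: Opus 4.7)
The plan is to prove a two-sided estimate
\[
\gamma^2 E_n(T^\oplus) - C\bigl(\gamma^{2-\beta}+\gamma^{2\beta}\bigr) \leq E_n(Q^\gamma) \leq \gamma^2 E_n(T^\oplus) + C\gamma^{2-\beta}
\]
for each $n\in\{1,\dots,\mathcal N^\oplus\}$, using the parameter $\beta\in(1/2,1)$ that controls the cut-off $\chi^\gamma_v$ in \eqref{chigamma}. Optimising the two polynomial losses by setting $2-\beta=2\beta$, i.e.\ $\beta=2/3$, yields the announced remainder $O(\gamma^{4/3})$. This mirrors the straight-edge argument of Theorem~\ref{asexp}, but with the exponential estimates of Lemma~\ref{pq} replaced by the polynomial estimates of Proposition~\ref{proppropqm}, and with an extra penalty coming from the potential $V$ produced by the IMS formula \eqref{IMScurv}.

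For the upper bound I would argue exactly as in \eqref{asvp1}. Fix $l\in\{1,\dots,K^\oplus\}$, set $d=m_1+\cdots+m_l$, and take the trial space $\mathcal F^\gamma=\Span\{\tilde\phi^{\gamma,v}_n : (n,v)\in\bigcup_{j=1}^l \mathcal S_j\}$, which has dimension $d$ for $\gamma$ large by Lemma~\ref{licurv}. The estimates \eqref{prop1bis}--\eqref{prop2} give, for any $\psi=\sum_j c_j\tilde\phi_j\in\mathcal F^\gamma$,
\[
q^\gamma(\psi,\psi)\le\bigl(\gamma^2\lambda_l+C\gamma^{2-\beta}\bigr)\sum_j|c_j|^2,\qquad \|\psi\|^2\ge (1-C\gamma^{-\beta})\sum_j|c_j|^2,
\]
so the min-max principle yields $E_d(Q^\gamma)\le\gamma^2\lambda_l+C\gamma^{2-\beta}$.

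For the lower bound I would invoke the Dirichlet bracketing of Lemma~\ref{mincurvpol}, namely
\[
E_n(Q^\gamma)\ge E_n\Bigl(\bigoplus_{v\in\mathcal V} Q^{\gamma,V}_{v,2\gamma^{-\beta}}\oplus Q^{\gamma,V}_0\Bigr).
\]
The regular part is controlled by \eqref{lbvpregpart}: $E_1(Q^{\gamma,V}_0)\ge -\gamma^2-C\gamma^{2\beta}$; since $E^{\max}<-1$ strictly, $\gamma^2 E_n(T^\oplus)\le\gamma^2 E^{\max}<-\gamma^2-C\gamma^{2\beta}$ for $\gamma$ large, so the regular part contributes no eigenvalue below $\gamma^2 E_n(T^\oplus)+O(\gamma^{2\beta})$ and can be ignored for $n\le\mathcal N^\oplus$. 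For each vertex part, I would push $\phi\in D(q^{\gamma,V}_{v,2\gamma^{-\beta}})$ via the diffeomorphism $F_v$ (extending by zero outside $(U_v)_{0,2\gamma^{-\beta}}$, which is legitimate thanks to the Dirichlet condition on $\partial\Omega_{v,2\gamma^{-\beta}}\setminus\Gamma_{v,2\gamma^{-\beta}}$) and apply estimates in the spirit of Lemma~\ref{Lemmacov}: the change of variables costs a multiplicative factor $1+O(\gamma^{-\beta})$ on each term of the sesquilinear form, while the potential $V$ contributes at most $c\gamma^{2\beta}\|\phi\|^2$ by \eqref{majV}. Comparing Rayleigh quotients on $U_v$ with those of $T^{\gamma,\alpha_v}=\gamma^2 T_v$ via the min-max principle, I get
\[
E_n(Q^{\gamma,V}_{v,2\gamma^{-\beta}})\ge (1-C\gamma^{-\beta})\gamma^2 E_n(T_v)-c\gamma^{2\beta}\ge \gamma^2 E_n(T_v)-C\gamma^{2-\beta}-c\gamma^{2\beta}.
\]
Taking direct sums and using $E_n(T^\oplus)=\min_v E_n(T_v)$ in the appropriate ordering gives the desired lower bound. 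The main obstacle is precisely the quantitative control of the change of variables on the truncated vertex pieces together with the potential $V$: this is where two competing powers of $\gamma$ appear, and balancing them via $\beta=2/3$ is what produces the exponent $4/3$, a rate that seems intrinsic to this approach and cannot be improved without genuinely new ingredients.
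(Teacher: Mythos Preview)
Your proposal is correct and follows essentially the same route as the paper: the upper bound via the weak quasi-mode trial space and Proposition~\ref{proppropqm}, and the lower bound via Lemma~\ref{mincurvpol}, with \eqref{lbvpregpart} handling the regular part and the change of variables through $F_v$ handling the vertex parts (the paper packages this last step as Lemma~\ref{min-maxTS}, where the modified Robin parameter $f_\beta(\gamma)=\gamma+C\gamma^{1-\beta}$ appears explicitly), followed by the optimisation $\beta=2/3$. The only cosmetic slip is the phrase ``$E_n(T^\oplus)=\min_v E_n(T_v)$'': what you actually need and use is the direct-sum identity $E_{d+1}\bigl(\bigoplus_v T_v\bigr)=\lambda_{l+1}$, which is exactly how the paper states it.
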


The proof of Theorem~\ref{asexpcurv} follows exactly the same steps as the one for polygons with straight edges. We need the following intermediary result.

\begin{proposition}\label{asvpcurv}
For all $l\in \{0,...,K^\oplus\}$ and for $\gamma$ large enough we have,
\begin{align}
\label{asvpcurvun}
E_{m_1+...+m_l}(Q^\gamma) &\leq \gamma^2 \lambda_l + C \gamma^{4/3}, \\
\label{asvpcurvdeux}
E_{m_0+...+m_l+1}(Q^\gamma) &\geq \gamma^2 \lambda_{l+1} -C \gamma^{4/3},
\end{align}
with the convention $m_0=0$.
\end{proposition}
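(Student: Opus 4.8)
I would prove Proposition~\ref{asvpcurv} by mimicking the two-sided bracketing of Proposition~\ref{asvp}, but now with the weak quasi-modes $\tilde\phi^{\gamma,v}_n$ from Section~\ref{descriptqmcurv} in place of the true quasi-modes, and with the $\gamma$-dependent partition of unity $(\tilde\chi^\gamma_v)$ of Section~\ref{cutvertices} in place of the fixed one. The upper bound \eqref{asvpcurvun} comes from the min-max principle applied to the test space $\mathcal F^\gamma\coloneqq \Span\{\tilde\phi^{\gamma,v}_n, (n,v)\in\cup_{j=1}^l\mathcal S_j\}$, which has dimension $d\coloneqq m_1+\dots+m_l$ for $\gamma$ large by Lemma~\ref{licurv}. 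Expanding the Rayleigh quotient of a combination $\sum c_j\tilde\phi_j$ and invoking the estimates \eqref{prop1bis}--\eqref{prop2} of Proposition~\ref{proppropqm}, one gets, exactly as in \eqref{major1}--\eqref{major2},
\[
q^\gamma\Bigl(\sum_j c_j\tilde\phi_j,\sum_j c_j\tilde\phi_j\Bigr)\le\bigl(\gamma^2\lambda_l+C\gamma^{2-\beta}\bigr)\sum_j|c_j|^2,\qquad \Bigl|\,\bigl\|\textstyle\sum_j c_j\tilde\phi_j\bigr\|^2-\sum_j|c_j|^2\,\Bigr|\le C\gamma^{-\beta}\sum_j|c_j|^2,
\]
so that $E_d(Q^\gamma)\le\gamma^2\lambda_l+C\gamma^{2-\beta}$ for $\gamma$ large. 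The lower bound \eqref{asvpcurvdeux} comes from Lemma~\ref{mincurvpol}, which gives $E_n(Q^\gamma)\ge E_n\bigl((\bigoplus_{v\in\mathcal V}Q^{\gamma,V}_{v,2\gamma^{-\beta}})\oplus Q^{\gamma,V}_0\bigr)$; on the regular part one uses \eqref{lbvpregpart} of Lemma~\ref{thregpart}, namely $E_1(Q^{\gamma,V}_0)\ge-\gamma^2-C\gamma^{2\beta}$, while on each corner piece one extends by zero and controls the potential $V$ via \eqref{majV} to get $E_n(Q^{\gamma,V}_{v,2\gamma^{-\beta}})\ge\gamma^2 E_n(T_v)-c\gamma^{2\beta}$ for $n\le\mathcal N_v$ (the truncation of $U_v$ to a ball of radius $2\gamma^{-\beta}$ only raises eigenvalues, and one must check the remaining Dirichlet-type boundary piece is harmless, which it is since we are extending by zero and testing against a larger form domain). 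Assembling these and using $\lambda_{l+1}<-1$ yields $E_{m_0+\dots+m_l+1}(Q^\gamma)\ge\gamma^2\lambda_{l+1}-C\gamma^{2\beta}$.

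**Optimizing the exponent.** Both bounds have errors of the form $C\gamma^{2-\beta}$ (upper) and $C\gamma^{2\beta}$ (lower), with $\beta\in(1/2,1)$ still free. The total error is $O(\gamma^{\max(2-\beta,2\beta)})$, which is minimized by balancing $2-\beta=2\beta$, i.e. $\beta=2/3$, giving the exponent $4/3$ announced in the statement. So the final step is simply to fix $\beta=2/3$ in the definition \eqref{chigamma} of $\chi^\gamma_v$ and read off $r(\gamma)=O(\gamma^{4/3})$; this is where the strange-looking $4/3$ in Theorem~\ref{Intro_Thasvp} and Theorem~\ref{asexpcurv} comes from.

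**From the Proposition to the Theorem.** Proposition~\ref{asvpcurv} already shows the eigenvalues of $Q^\gamma$ cluster: for each $n\in\{1,\dots,\mathcal N^\oplus\}$, writing $n$ between $m_0+\dots+m_l+1$ and $m_0+\dots+m_{l+1}$, we get $E_n(Q^\gamma)\in(\gamma^2\lambda_{l+1}-C\gamma^{4/3},\gamma^2\lambda_{l+1}+C\gamma^{4/3})$ with $\lambda_{l+1}=E_n(T^\oplus)$, and since the distinct $\lambda$'s differ by a fixed positive gap while the cluster radius is $o(\gamma^2)$, the clusters are disjoint for large $\gamma$ and carry exactly $m_{l+1}$ eigenvalues each; combined with \eqref{asvpcurvun}--\eqref{asvpcurvdeux} this pins each $E_n(Q^\gamma)$ to the right cluster and gives $|E_n(Q^\gamma)-\gamma^2 E_n(T^\oplus)|\le C\gamma^{4/3}$, which is Theorem~\ref{asexpcurv}. (Unlike the straight-edge case, no spectral-approximation step à la Proposition~\ref{nvp} is needed, because we only want a polynomial remainder, not the within-cluster resolution.)

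**Main obstacle.** The delicate point is the lower bound \eqref{asvpcurvdeux}, specifically the estimate $E_n(Q^{\gamma,V}_{v,2\gamma^{-\beta}})\ge\gamma^2 E_n(T_v)-c\gamma^{2\beta}$ for the truncated corner operators: one must compare the Robin Laplacian on the curvilinear piece $\Omega_{v,2\gamma^{-\beta}}$ (with a Dirichlet condition on the arc $\partial\Omega_{v,2\gamma^{-\beta}}\setminus\Gamma_{v,2\gamma^{-\beta}}$ and the subtracted potential $V$) to the model $T_v$ on the full infinite sector. The comparison of quadratic forms under the diffeomorphism $F_v$ is exactly the content of Lemma~\ref{Lemmacov}, whose errors are $O(\gamma^{-\beta})$ relative — but one has to be careful that, after extension by zero and the change of variables, the resulting form on the sector dominates $\gamma^2 T_v$ up to $-c\gamma^{2\beta}$ (the $\gamma^{2\beta}$ from $\|V\|_\infty$) and up to the relative $\gamma^{-\beta}$ distortion of the gradient and boundary terms, which against eigenvalues of size $\gamma^2$ contributes $O(\gamma^{2-\beta})=O(\gamma^{4/3})$. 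Bookkeeping these competing scales correctly — and checking that the truncation to $B(v,2\gamma^{-\beta})$ does not lose any of the first $\mathcal N_v$ eigenvalues of $T_v$ (true because $T_v$'s discrete eigenfunctions are exponentially localized, Theorem~\ref{Agmon}, so the min-max values on the truncated sector converge to those of $T_v$ with exponentially small error, dominated by $\gamma^{4/3}$) — is the real work.
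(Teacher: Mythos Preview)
Your proposal is correct and follows the same route as the paper: min-max on the span of the weak quasi-modes for the upper bound, and Lemma~\ref{mincurvpol} together with \eqref{lbvpregpart} and a change of variables on the corner pieces for the lower bound, balanced at $\beta=2/3$. One slip in your plan: the inequality $E_n(Q^{\gamma,V}_{v,2\gamma^{-\beta}})\ge\gamma^2 E_n(T_v)-c\gamma^{2\beta}$ is what holds for \emph{straight} polygons; in the curvilinear case the diffeomorphism $F_v$ distorts the form by a relative $O(\gamma^{-\beta})$, so the corner bound actually reads $E_n(Q^{\gamma,V}_{v,2\gamma^{-\beta}})\ge\gamma^2 E_n(T_v)-C\gamma^{2-\beta}-c\gamma^{2\beta}$ (this is precisely what your ``Main obstacle'' paragraph says, and what the paper isolates as Lemma~\ref{min-maxTS} with the shifted Robin parameter $f_\beta(\gamma)=\gamma+C\gamma^{1-\beta}$). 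This does not affect your optimization, since $\max(2-\beta,2\beta)$ is still minimized at $\beta=2/3$, but your sentence ``yields $E_{m_0+\dots+m_l+1}(Q^\gamma)\ge\gamma^2\lambda_{l+1}-C\gamma^{2\beta}$'' should be amended accordingly.
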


\begin{proof}[Proof of Theorem~\ref{asexpcurv}]
For each $n\in\{1,..., \mathcal N^\oplus\}$, there exists $l\in\{0,...,K^\oplus-1\}$ such that $m_0+...+m_l+1\leq n\leq m_0+...+m_{l+1}$ and $\lambda_{l+1} = E_n(T^\oplus)$. We get the result by Proposition~\ref{asvpcurv} and the fact that the eigenvalues are ordered in the increasing way.
\end{proof}

\begin{proof}[Proof of Proposition~\ref{asvpcurv}]
We begin with the proof of \eqref{asvpcurvun}.
We introduce 
$d\coloneqq \sum_{j=1}^l m_j$, 
$\mathcal F^\gamma \coloneqq \Span \{ \tilde \phi^{\gamma,v}_n, (n,v)\in \bigcup_{j=1}^l \mathcal S_j\}$, 
and for simplicity we denote by $(\tilde \phi_1,..., \tilde \phi_d)$ the elements of 
$\{\tilde \phi^{\gamma,v}_n, (n,v)\in\bigcup_{j=1}^l \mathcal S_l \}$. 
By Lemma~\ref{licurv}, $\dim(\mathcal F^\gamma)=d$ for $\gamma$ large enough. Then, by the min-max principle, for $\gamma$ large enough we have
\begin{align}\label{asvpcurv1}
E_d(Q^\gamma)\leq \sup_{\substack{\psi \in \mathcal F^\gamma \\ \psi \neq 0}}\frac{q^{\gamma}(\psi,\psi)}{\lVert \psi\rVert^2} 
=\sup_{\substack{(c_1,...,c_d)\in \mathbb C^d\\ (c_1,...,c_d)\neq(0,...,0)}} \frac{q^{\gamma}(\sum_{j=1}^d c_j \tilde \phi_j,\sum_{j=1}^d c_j \tilde \phi_j)}{\lVert \sum_{j=1}^d c_j \tilde \phi_j\rVert^2}.
\end{align}
Let us first expand the numerator :
\[
q^\gamma(\sum_{j=1}^d c_j \tilde \phi_j,\sum_{j=1}^d c_j \tilde \phi_j)=\sum_{j=1}^d \lvert c_j\rvert^2 q^\gamma(\tilde \phi_j,\tilde \phi_j) +2\Re \sum_{j<k} c_j \overline{c_k} q^{\gamma}(\tilde \phi_j,\tilde \phi_k).
\]
We use \eqref{prop2} to obtain 
\begin{align}\label{asvpcurv3}
q^\gamma(\sum_{j=1}^d c_j \tilde \phi_j,\sum_{j=1}^d c_j \tilde \phi_j) \leq \left( \gamma^2 \lambda_l + C\gamma^{2-\beta}\right) \sum_{j=1}^d \lvert  c_j\rvert^2.
\end{align}
Then, the denominator expands as 
\[
\lVert \sum_{j=1}^d c_j \tilde \phi_j\rVert^2=\sum_{j=1}^d \lvert c_j\rvert^2 \lVert \tilde \phi_j\rVert^2 +2\Re \sum_{j<k} c_j \overline{c_k} \langle \tilde \phi_j,\tilde \phi_k\rangle,
\]
and by \eqref{prop1bis} we get
\begin{align}
\label{asvpcurv4}
\left \lvert \lVert \sum_{j=1}^d c_j \tilde \phi_j\rVert ^2 -\sum_{j=1}^d \lvert c_j\rvert^2 \right \rvert \leq C \gamma^{-\beta} \left ( \sum_{j=1}^d \lvert c_j \rvert^2 \right).
\end{align}
Combining \eqref{asvpcurv3} and \eqref{asvpcurv4}, we get for $\gamma$ large enough,
\[
\frac{q^{\gamma}(\sum_{j=1}^d c_j \tilde \phi_j,\sum_{j=1}^d c_j \tilde \phi_j)}{\lVert \sum_{j=1}^d c_j \tilde \phi_j\rVert^2} \leq \gamma^2\lambda_l +C \gamma^{2-\beta},
\]
which concludes the proof on the upper bound thanks to \eqref{asvpcurv1} and taking $\beta=\frac{2}{3}$.

Let us now focus on the lower bound \eqref{asvpcurvdeux}. In the following $d\coloneqq\sum_{j=0}^l m_l$. Thanks to Lemma~\ref{mincurvpol} we can write 
\begin{align}
\label{min1}
E_{d+1}(Q^\gamma)\geq E_{d+1}\left(\bigoplus_{v\in\mathcal V} Q^{\gamma,V}_{v,2\gamma^{-\beta}}\bigoplus Q^{\gamma,V}_0 \right).
\end{align}
Moreover, by \eqref{majV}, we have the lower bound, for all $n\in\mathbb N$,
\begin{align}
\label{lbvpcurv}
E_{n}(Q^{\gamma,V}_{v,2\gamma^{-\beta}}) \geq E_n(Q^\gamma_{v,2\gamma^{-\beta}})-c\gamma^{2\beta},
\end{align}
where $Q^\gamma_{v,2\gamma^{-\beta}}$ acts on $L^2(\Omega_{v,2\gamma^{-\beta}})$ and is defined as the unique self-adjoint operator associated with
\[
q^\gamma_{v,2\gamma^{-\beta}}(\phi,\phi)=\int_{\Omega_{v,2\gamma^{-\beta}}} \lvert \nabla \phi \rvert^2 dx -\gamma \int_{\Gamma_{v,2\gamma^{-\beta}}} \lvert \phi \rvert ^2 ds,\quad \phi \in D(q^\gamma_{v,2\gamma^{-\beta}})\coloneqq D(q^{\gamma,V}_{v,2\gamma^{-\beta}}).
\]
Let us fix $v\in\mathcal V$. 
In order to study $Q^\gamma_{v,2\gamma^{-\beta}}$ we perform a change of variables.
For $\phi \in D(q^\gamma_{v,2\gamma^{-\beta}})$, 
we introduce $\psi(u)\coloneqq\phi\circ F^{-1}_v(u)$ 
for all $u\in U_v\cap B(0,2\gamma^{-\beta})$. 
By Taylor-Lagrange, for all $u\in B(0,2\gamma^{-\beta})$ we have
\begin{align}
\label{TL7}
\lvert (\nabla F^{-1}_v(u))^{-1} -I_2\rvert \leq C \gamma^{-\beta}.
\end{align}
Thanks to the estimates \eqref{est2}, \eqref{est3} and \eqref{TL7} we get, for all $\phi\in D(q^\gamma_{v,2\gamma^{-\beta}})$,
\begin{align}
\label{TS1}
q^\gamma_{v,2\gamma^{-\beta}}(\phi,\phi) \geq (1-C\gamma^{-\beta}) \int_{(U_v)_{0,2\gamma^{-\beta}}} \lvert \nabla \psi\rvert^2 du -\gamma(1+C\gamma^{-\beta}) \int_{\Sigma_{v,2\gamma^{-\beta}}} \lvert \psi \rvert ds.
\end{align} 
We now introduce the sesquilinear form
\[
t^{f_\beta(\gamma)}_v(\psi,\psi) =\int_{(U_v)_{0,2\gamma^{-\beta}}} \lvert \nabla \psi \rvert^2 du -f_\beta(\gamma) \int_{\Sigma_{v,2\gamma^{-\beta}}} \lvert \psi \rvert^2 ds,
\]
with $D(t^{f_\beta(\gamma)}_v)\coloneqq \{\psi \in H^1\left((U_v)_{0,2\gamma^{-\beta}}\right), \psi(u)=0 \text{ for } u\in \partial(U_v)_{0,2\gamma^{-\beta}}\backslash \Sigma_{v,2\gamma^{-\beta}}\}$, 
and 
\[
f_\beta(\gamma)\coloneqq\gamma +C\gamma^{1-\beta}.
\] 

\begin{lemma}\label{min-maxTS}
For any $n\in\mathbb N$ and for $\gamma$ large enough,
\[
E_n(Q^\gamma_{v,2\gamma^{-\beta}}) \geq (1-C\gamma^{-\beta}) E_n(T^{f_\beta(\gamma)}_{v,2\gamma^{-\beta}}).
\]
\end{lemma}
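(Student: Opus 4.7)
The strategy is to combine the form bound \eqref{TS1}, the two-sided $L^2$-equivalence \eqref{est1}, and the min-max principle. First I would verify that the change of variable $\phi \mapsto \psi \coloneqq \phi \circ F_v^{-1}$ defines a linear bijection from $D(q^\gamma_{v,2\gamma^{-\beta}})$ onto $D(t^{f_\beta(\gamma)}_v)$: since $F_v$ is a $C^2$-diffeomorphism it preserves $H^1$-regularity, and by construction it sends the Dirichlet part $\partial\Omega_{v,2\gamma^{-\beta}}\setminus\Gamma_{v,2\gamma^{-\beta}}$ onto $\partial (U_v)_{0,2\gamma^{-\beta}}\setminus \Sigma_{v,2\gamma^{-\beta}}$, so the boundary conditions transfer correctly.

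Next I would absorb the two error factors $(1-C\gamma^{-\beta})$ and $\gamma(1+C\gamma^{-\beta})$ appearing in \eqref{TS1} into the single sesquilinear form $t^{f_\beta(\gamma)}_v$. A short computation shows that if the constant in $f_\beta(\gamma)=\gamma+C\gamma^{1-\beta}$ is chosen sufficiently large, one has $\gamma(1+C\gamma^{-\beta})\leq (1-C\gamma^{-\beta}) f_\beta(\gamma)$ for $\gamma$ large enough, so \eqref{TS1} rearranges into the clean form inequality
\[
q^\gamma_{v,2\gamma^{-\beta}}(\phi,\phi) \geq (1-C\gamma^{-\beta})\, t^{f_\beta(\gamma)}_v(\psi,\psi).
\]
Coupled with the two-sided $L^2$-equivalence $(1-C\gamma^{-\beta})\lVert\psi\rVert^2\leq \lVert\phi\rVert^2 \leq (1+C\gamma^{-\beta})\lVert\psi\rVert^2$ provided by \eqref{est1}, this yields after a short Rayleigh-quotient manipulation a bound of the form $q^\gamma_{v,2\gamma^{-\beta}}(\phi,\phi)/\lVert\phi\rVert^2 \geq (1-C\gamma^{-\beta})\, t^{f_\beta(\gamma)}_v(\psi,\psi)/\lVert\psi\rVert^2$. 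Feeding this into the min-max characterization of both $E_n(Q^\gamma_{v,2\gamma^{-\beta}})$ and $E_n(T^{f_\beta(\gamma)}_{v,2\gamma^{-\beta}})$, and using the bijection to identify the $n$-dimensional trial subspaces on either side, delivers the claimed eigenvalue inequality.

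The main bookkeeping obstacle is the sign subtlety in the Rayleigh-quotient step: the upper and lower bounds in \eqref{est1} contribute in opposite directions depending on the sign of $t^{f_\beta(\gamma)}_v(\psi,\psi)$, and one has to pick the appropriate side of the two-sided equivalence so that both error sources---the form coefficient $1-C\gamma^{-\beta}$ and the norm ratio $1\pm C\gamma^{-\beta}$---combine into a single multiplicative factor of order $1-O(\gamma^{-\beta})$. Since both error sources are of the same order $\gamma^{-\beta}$, this is purely a matter of adjusting the constant $C$, and no new analytic estimate beyond Lemma \ref{Lemmacov} is required.
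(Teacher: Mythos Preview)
Your overall strategy matches the paper's: both use \eqref{TS1}, the two-sided norm equivalence \eqref{est1}, and the min-max principle, with the change of variables $\phi\mapsto\psi=\phi\circ F_v^{-1}$ transferring trial subspaces. Your observation that this map is a linear bijection between the two form domains is cleaner than the paper's route, which only records the inclusion and then checks via a Gramian argument that an $n$-dimensional subspace $\mathcal G$ is sent to an $n$-dimensional subspace $\mathcal J$.

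There is, however, a genuine wrinkle in your handling of the norm step. After writing $q(\phi)\ge(1-C\gamma^{-\beta})\,t^{f_\beta(\gamma)}_v(\psi,\psi)$ you propose to divide by $\lVert\phi\rVert^2$ and then ``pick the appropriate side'' of \eqref{est1} according to the sign of $t^{f_\beta(\gamma)}_v(\psi,\psi)$. That case-split does not produce a \emph{uniform} inequality of the form $q(\phi)/\lVert\phi\rVert^2\ge(1-C\gamma^{-\beta})\,t(\psi)/\lVert\psi\rVert^2$: when $t(\psi)<0$ the two $(1-C\gamma^{-\beta})$ factors cancel and you only get $q(\phi)/\lVert\phi\rVert^2\ge t(\psi)/\lVert\psi\rVert^2$, which after min-max yields $E_n(Q)\ge E_n(T)$ rather than $E_n(Q)\ge(1-C\gamma^{-\beta})E_n(T)$; for $E_n(T)<0$ these are \emph{not} comparable in the direction you need. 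The paper avoids this by splitting $t^{f_\beta}(\psi)$ into its two sign-definite pieces \emph{before} replacing $\lVert\phi\rVert^2$: it bounds $\int|\nabla\psi|^2/\lVert\phi\rVert^2$ from below using $\lVert\phi\rVert^2\le(1+C\gamma^{-\beta})\lVert\psi\rVert^2$, and $-f_\beta\int_\Sigma|\psi|^2/\lVert\phi\rVert^2$ from below using $\lVert\phi\rVert^2\ge(1-C\gamma^{-\beta})\lVert\psi\rVert^2$. The resulting mismatched prefactors are then re-absorbed into the generic constants in $1-C\gamma^{-\beta}$ and in $f_\beta(\gamma)=\gamma+C\gamma^{1-\beta}$, which is exactly where your freedom to enlarge $C$ is spent. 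This gives the pointwise bound for every $\phi$, regardless of the sign of $t(\psi)$, and the lemma follows directly. So the fix is not a case-split on the sign of $t(\psi)$ but the term-by-term split into gradient and boundary parts.
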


\begin{proof}
First, we have to notice that if 
$\phi \in D(q^\gamma_{v,2\gamma^{-\beta}})$,
 then $\psi\in D(t^{f_\beta(\gamma )}_{v,2\gamma^{-\beta}})$. 
We  can use \eqref{TS1} and  the min-max principle to obtain,
\begin{align*}
E_n(Q^\gamma_{v,2\gamma^{-\beta}}) 
\geq(1-C\gamma^{-\beta}) \inf_{\substack{\mathcal G \subset D(q^\gamma_{v,2\gamma^{-\beta}})\\ \dim(\mathcal G)=n }}
\sup_{\substack{\phi\in \mathcal G\\\phi \neq 0}} \frac{t^{f_\beta(\gamma)}_{v,2\gamma^{-\beta}}(\psi,\psi)}{\lVert \phi \rVert^2_{L^2(\Omega_{v,2\gamma^{-\beta}})}}.
\end{align*}
By \eqref{est1} we have
\[
\frac{\displaystyle\int_{(U_v)_{0,2\gamma^{-\beta}}}\lvert \nabla \psi\rvert^2 dx}{\lVert \phi \rVert^2_{L^2(\Omega_{v,2\gamma^{-\beta}})}} \geq \frac{\displaystyle\int_{(U_v)_{0,2\gamma^{-\beta}}}\lvert \nabla \psi\rvert^2 dx}{(1+C\gamma^{-\beta})\lVert \psi\rVert^2_{L^2\left((U_v)_{0,2\gamma^{-\beta}}\right)}},
\]
and
\[
-f_\beta(\gamma)\frac{\displaystyle \int_{\Sigma_{v,2\gamma^{-\beta}}}\lvert \psi\rvert^2 ds}{\lVert \phi\rVert^2_{L^2(\Omega_{v,2\gamma^{-\beta}})}} \geq 
-f_\beta(\gamma) \frac{\displaystyle\int_{\Sigma_{v,2\gamma^{-\beta}}}\lvert \psi \rvert^2ds}{(1-C\gamma^{-\beta})\lVert \psi \rVert^2_{L^2\left((U_v)_{0,2\gamma^{-\beta}}\right)}}.
\]
 Thus, we first obtain 
\[
E_n(Q^\gamma_{v,2\gamma^{-\beta}}) \geq (1-C\gamma^{-\beta}) 
\inf_{\substack{\mathcal G \subset D(q^\gamma_{v,2\gamma^{-\beta}})\\ \dim(\mathcal G)=n }}
\sup_{\substack{\phi\in \mathcal G\\\phi \neq 0}} \frac{t^{f_\beta(\gamma)}_{v,2\gamma^{-\beta}}(\psi,\psi)}{\lVert \psi \rVert^2_{L\left((U_v)_{0,2\gamma^{-\beta}}\right)}}.
\]
If we denote $\mathcal J\coloneqq\{\psi= \phi\circ F_v^{-1},\phi\in\mathcal G\}$ and if
 $(\phi_1,...,\phi_n)$ is an orthonormal basis of $\mathcal G$, then using again \eqref{TL1} we obtain
\[
\lvert \langle \phi_i,\phi_j\rangle_{L^2(\Omega_{v,2\gamma^{-\beta}})} -\langle \psi_i,\psi_j\rangle_{L^2\left((U_v)_{0,2\gamma^{-\beta}}\right)} \rvert \leq C\gamma^{-\beta} \lvert \langle \phi_i,\phi_j\rangle_{L^2(\Omega_{v,2\gamma^{-\beta}})} \rvert,
\]
where $\psi_k=\phi_k\circ F^{-1}_v$. 
Then, $(\psi_k)_{k=1}^n$ is linearly independent if $\gamma$ is large enough which implies that $\dim(\mathcal J)=n$ for large $\gamma$. 
Finally
\[
E_n(T^{f_\beta(\gamma)}_{v,2\gamma^{-\beta}}) \leq
\inf_{\substack{\mathcal G \subset D(q^\gamma_{v,2\gamma^{-\beta}})\\ \dim(\mathcal G)=n }}
\sup_{\substack{\phi\in \mathcal G\\\phi \neq 0}} \frac{t^{f_\beta(\gamma)}_{v,2\gamma^{-\beta}}(\psi,\psi)}{\lVert \psi \rVert^2_{L^2\left((U_v)_{0,2\gamma^{-\beta}}\right)}},
\]
which concludes the proof.
\end{proof}
Extending $\psi \in D(t^{f_\beta(\gamma)}_{v,2\gamma^{-\beta}})$ by $0$, we immediately have for all $n\leq \mathcal N^\oplus$ and for $\gamma$ large enough, thanks to Lemma~\ref{min-maxTS} and the min-max principle, 
\[
E_n(Q^\gamma_{v,2\gamma^{-\beta}}) \geq (1-C\gamma^{-\beta})E_n(T^{f_\beta(\gamma)}_v).
\]
In particular, 
\begin{align*}
E_{d+1}(\bigoplus_{v\in\mathcal V} Q^\gamma_{v,2\gamma^{-\beta}})
 &\geq (1-C\gamma^{-\beta}) (f_\beta(\gamma))^2 E_{d+1} (T^{\oplus}) \\
&= (1-C\gamma^{-\beta}) (f_\beta(\gamma))^2 \lambda_{l+1}.
\end{align*}
Notice that $(1-C\gamma^{-\beta}) (f_\beta(\gamma))^2=\gamma^2 + O(\gamma^{2-\beta})$, as $\gamma \to +\infty$. Then, for $\gamma$ large enough,
\begin{align}
\label{asvpcurv5}
E_{d+1}(\bigoplus_{v\in\mathcal V} Q^\gamma_{v,2\gamma^{-\beta}}) \geq \gamma^2 \lambda_{l+1}-C \gamma^{2-\beta}.
\end{align}
To finish the proof, in view of \eqref{min1}, we need a lower bound of the first eigenvalue of $Q^{\gamma,V}_0$. 
By the inequality \eqref{lbvpregpart} of Lemma~\ref{thregpart} we know that $E_1(Q^{\gamma,V}_0)\geq -\gamma^2-C\gamma^{2\beta}$ for $\gamma$ large enough. As $\lambda_{l+1}<-1$,  we finally obtain
\[
E_{d+1}(Q^\gamma)\geq \gamma^2\lambda_{l+1}-C\gamma^{2-\beta}-c\gamma^{2\beta}.
\]
Taking $\beta=2/3$ gives us the result.
\end{proof}

\subsection{Weyl asymptotics for Robin Laplacian on curvilinear polygons}\label{sectweylascurvpol}

In this section we prove Theorem~\ref{Intro_asweyl}.
The choice of the thresholds $E\gamma^2$ for $E\in (-1,0)$ and $-\gamma^2 +\lambda\gamma$ for $\lambda\in\mathbb R$ is lead by the study of domains with smooth boundary \cite{HKR}. 

\begin{theorem}\label{weylcurvpol}
For all $E\in(-1,0)$ and $\beta\in(1/2,1)$,
\begin{align}
\label{weylcurvpol1}
\mathcal N(Q^\gamma, E\gamma^2)= \gamma \frac{\lvert \partial \Omega\rvert \sqrt{E+1}}{\pi} + O(\gamma ^{1-\beta}), \quad \text{as}\quad \gamma \to +\infty.
\end{align}
For all $\lambda \in\mathbb R$,
\begin{align}
\label{weylcurvpol2}
\mathcal N(Q^\gamma,-\gamma^2+\lambda \gamma) = \frac{\sqrt{\gamma}}{\pi} \sum_{k=1}^M \int_{0}^{l_k} \sqrt{(\kappa_k(s)+\lambda)_+} ds + O(\gamma ^{1/4}), \quad \text{as}\quad \gamma\to +\infty.
\end{align}
\end{theorem}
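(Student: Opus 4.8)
The plan is to combine the two-sided bracketing of $Q^\gamma$ established in Section~\ref{cutvertices} with the estimates on the regular part from Lemma~\ref{thregpart} and a control of the contribution of the corners. Concretely, by Lemma~\ref{Dbracket} and Lemma~\ref{mincurvpol} we have, for all $n\in\mathbb N$ and $\gamma>0$,
\[
E_n\Big(\bigoplus_{v\in\mathcal V} Q^{\gamma,V}_{v,2\gamma^{-\beta}}\bigoplus Q^{\gamma,V}_0\Big)\le E_n(Q^\gamma)\le E_n\Big(\bigoplus_{v\in\mathcal V} Q^\gamma_{v,\gamma^{-\beta}}\bigoplus Q^\gamma_0\Big),
\]
which translates, for any threshold $\mu$, into
\[
\mathcal N(Q^{\gamma}_0,\mu)+\sum_{v\in\mathcal V}\mathcal N(Q^\gamma_{v,\gamma^{-\beta}},\mu)\le \mathcal N(Q^\gamma,\mu)\le \mathcal N(Q^{\gamma,V}_0,\mu)+\sum_{v\in\mathcal V}\mathcal N(Q^{\gamma,V}_{v,2\gamma^{-\beta}},\mu).
\]
For the regular part, the four inequalities \eqref{upNregpart}, \eqref{lbNregpart}, \eqref{upNregpartbis}, \eqref{lbNregpartbis} of Lemma~\ref{thregpart} already give the leading term $\gamma|\partial\Omega|\sqrt{E+1}/\pi$ (resp.\ $\frac{\sqrt\gamma}{\pi}\sum_k\int_0^{l_k}\sqrt{(\kappa_k+\lambda)_+}$) with the claimed remainders $O(\gamma^{1-\beta})$ (resp.\ $O(\gamma^{1/4})$ once one also accounts for the corner terms and optimizes $\beta$). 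So the whole task reduces to showing that the corner pieces $Q^\gamma_{v,\gamma^{-\beta}}$ and $Q^{\gamma,V}_{v,2\gamma^{-\beta}}$ contribute only to the remainder, i.e.
\[
\mathcal N(Q^{\gamma,V}_{v,2\gamma^{-\beta}},E\gamma^2)=O(\gamma^{1-\beta}),\qquad \mathcal N(Q^{\gamma,V}_{v,2\gamma^{-\beta}},-\gamma^2+\lambda\gamma)=O(\gamma^{\beta-1/2}),
\]
and likewise (trivially, being smaller) for $Q^\gamma_{v,\gamma^{-\beta}}$.

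The corner estimate is where the work is, and I would handle it exactly as announced in the introduction, following \cite{KK}. Fix $v\in\mathcal V$. First bound $V$ by $c\gamma^{2\beta}$ via \eqref{majV}, so that $Q^{\gamma,V}_{v,2\gamma^{-\beta}}\ge Q^\gamma_{v,2\gamma^{-\beta}}-c\gamma^{2\beta}$ and it suffices to count eigenvalues of the Robin Laplacian on the truncated cornered region $\Omega_{v,2\gamma^{-\beta}}$ below $E\gamma^2+c\gamma^{2\beta}$ (resp.\ $-\gamma^2+\lambda\gamma+c\gamma^{2\beta}$). Using the $C^2$-diffeomorphism $F_v$ and the Lemma~\ref{Lemmacov}-type change of variables, one compares this with the Robin Laplacian on the \emph{exact} sector piece $(U_v)_{0,C\gamma^{-\beta}}$ with a slightly perturbed Robin parameter $\gamma(1+O(\gamma^{-\beta}))$; by scaling $x\mapsto\gamma x$ this is unitarily equivalent to $T^{1,\alpha_v}$ restricted to a ball $B(0,R_\gamma)$ with $R_\gamma\simeq \gamma^{1-\beta}\to\infty$ and Dirichlet condition on the arc, and the shifted threshold becomes $O(\gamma^{2\beta-2})$ above (resp.\ $O(\gamma^{\beta-1})$ above) the bottom of the essential spectrum $-1$ of $T^{1,\alpha_v}$. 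Thus I must estimate, for the fixed sector operator $T^{1,\alpha_v}$ truncated to a large ball with Dirichlet condition on the outer arc, the number of eigenvalues below $-1+\delta$ with $\delta\to0$: this is $O(R_\gamma\sqrt{\delta}+1)$, which is $O(\gamma^{1-\beta})$ in the first regime and $O(\gamma^{(1-\beta)/2}\cdot\gamma^{(\beta-1)/2}+1)=O(1)$ in the second regime (the genuinely $\sqrt\gamma$-type accumulation of eigenvalues of the truncated sector near $-1$ occurs only at a window of width comparable to $1$, not $O(\gamma^{\beta-1})$). The counting bound for the truncated sector near $-1$ can be obtained by a further partition/bracketing of the ball into the neighborhood of the two edges (where, after the boundary-coordinates unfolding, one gets a one-dimensional operator analysis exactly as in Proposition~\ref{interpb}, giving an $O(R_\gamma\sqrt\delta)$ count) plus the interior part (where the operator is $\ge -1+$const and contributes nothing below $-1+\delta$ for small $\delta$).

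Finally I would assemble: plug the regular-part asymptotics of Lemma~\ref{thregpart} and the corner remainder estimates into the bracketing; for \eqref{weylcurvpol1} the remainder is $O(\gamma^{1-\beta})$ for every $\beta\in(1/2,1)$ as claimed; for \eqref{weylcurvpol2} the regular part contributes $O(\gamma^{\beta-1/2})$ from \eqref{upNregpartbis} and the corner part $O(\gamma^{\beta-1/2})$ as well (plus the $O(1)$ lower-bound error of \eqref{lbNregpartbis}), and one also checks that replacing $\int_{s_\gamma}^{l_k-s_\gamma}$ by $\int_0^{l_k}$ costs only $O(\gamma^{-\beta})$ since $\sqrt{(\kappa_k+\lambda)_+}$ is bounded; optimizing, the choice $\beta=3/4$ yields the total remainder $O(\gamma^{1/4})$. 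The main obstacle is the middle step — proving that the truncated model sector has at most $O(R_\gamma\sqrt\delta)$ eigenvalues in $(-1,-1+\delta)$, uniformly as $R_\gamma\to\infty$ and $\delta\to0$ in the relevant coupled regime — since this is precisely the statement that the corner carries no leading-order Weyl contribution; everything else is bookkeeping built on Lemmas~\ref{Lemmacov}, \ref{mincurvpol}, \ref{Dbracket} and \ref{thregpart}.
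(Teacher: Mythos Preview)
Your strategy is exactly the paper's: bracket via Lemmas~\ref{mincurvpol} and~\ref{Dbracket}, feed in Lemma~\ref{thregpart} for the regular part, and show the corner pieces contribute only to the remainder (the paper isolates this as Proposition~\ref{TSprop}).

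Two corrections on the corner step. First, your rescaled thresholds are off: in regime~1 the threshold $E\gamma^2+c\gamma^{2\beta}$ rescales to $E+c\gamma^{2\beta-2}$, which lies at distance $E+1+o(1)=O(1)$ above $-1$, not $O(\gamma^{2\beta-2})$; in regime~2 it rescales to $-1+O(\gamma^{2\beta-2})$, not $-1+O(\gamma^{\beta-1})$. Your conclusion $R_\gamma\sqrt\delta=O(\gamma^{1-\beta})$ in regime~1 is still right, but the displayed product $\gamma^{(1-\beta)/2}\cdot\gamma^{(\beta-1)/2}$ in regime~2 is not $R_\gamma\sqrt\delta$. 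Second, your ``edge-strips $+$ interior'' partition omits a necessary vertex piece where the two edge neighbourhoods overlap. The paper handles this concretely: after a symmetry reduction to the half-sector, it cuts out a triangle $D^1_\gamma$ of diameter $\gamma^{-1}$ at the tip (which scales to a \emph{fixed} domain, giving $O(1)$), then a strip $D^2$ along the edge embedded in a rectangle of size $\sim\gamma^{-\beta}\times\gamma^{-1}$ (giving $O(\gamma^{1-\beta})$ by separation of variables), and the leftover piece carries no Robin boundary and is $\ge 0$. This yields $O(\gamma^{1-\beta})$ for the corner in \emph{both} regimes, which suffices: for \eqref{weylcurvpol2} one balances it against the $O(\gamma^{\beta-1/2})$ from \eqref{upNregpartbis} at $\beta=3/4$. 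Your sharper $O(1)$ claim in regime~2 is not needed, and proving it would require the strip width to grow with $\gamma$ (so the transverse Robin--Neumann eigenvalue approaches $-1$), which enlarges the vertex region and in fact produces an $O(\log\gamma)$ term rather than $O(1)$.
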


\begin{proof}
Let $\tilde E\in[-1,0)$ and $\lambda \in\mathbb R$. Gathering the results of Lemma~\ref{mincurvpol} and Lemma~\ref{Dbracket} we can write 
\begin{align*}
\mathcal N(Q^\gamma_0,\tilde E\gamma^2+\lambda \gamma ) \leq \mathcal N(Q^\gamma,\tilde E\gamma^2&+\lambda\gamma)\\
&\leq  \mathcal N(Q^{\gamma,V}_0,\tilde E\gamma^2+\lambda\gamma) +\sum_{v\in\mathcal V} \mathcal N(Q^{\gamma,V}_{v,2\gamma^{-\beta}},\tilde E\gamma^2+\lambda\gamma).
\end{align*}
 By \eqref{lbvpcurv} we know that
$
\mathcal N(Q^{\gamma,V}_{v,2\gamma^{-\beta}},\tilde E\gamma^2+\lambda\gamma) \leq \mathcal N(Q^{\gamma}_{v,2\gamma^{-\beta}},\tilde E\gamma^2+\lambda\gamma+c\gamma^{2\beta})
$. Moreover, Lemma~\ref{thregpart} gives us estimates on the eigenvalue counting functions of $Q^\gamma_0$ and $Q^{\gamma,V}_0$. Hence, in order to conclude we need to prove that the truncated sectors do not contribute to the Weyl law at the leading order.

\begin{proposition}\label{TSprop}
For all $E\in(-1,0)$, $\beta\in(1/2,1)$  and $C>0$ we have for large $\gamma$,
\begin{align*}
\mathcal N(Q^\gamma_{v,2\gamma^{-\beta}},E\gamma^2+C\gamma^{2\beta})= O(\gamma^{1-\beta}).
\end{align*}
\end{proposition}

The asymptotics \eqref{weylcurvpol1} and \eqref{weylcurvpol2} follows immediately from Lemma~\ref{thregpart} and Proposition~\ref{TSprop}, taking $\beta=3/4$ for the second one.
\end{proof}

\begin{proof}[Proof of Proposition~\ref{TSprop}]
By Lemma~\ref{min-maxTS} we can write for all $n\in\mathbb N$,
\[
E_n(Q^\gamma_{v,2\gamma^{-\beta}}) \geq (1-C\gamma^{-\beta}) E_n(T^{f_\beta(\gamma)}_{v,2\gamma^{-\beta}}),
\]
where $f_\beta(\gamma)\coloneqq \gamma + C\gamma^{1-\beta}$. Then, 
\begin{align}
\label{TSprop1}
\mathcal N(Q^\gamma_{v,2\gamma^{-\beta}}, E\gamma^2 +C \gamma ^{2\beta}) \leq \mathcal N(T^{f_\beta(\gamma)}_{v,2\gamma^{-\beta}},  E \gamma ^2+C\gamma^m),
\end{align}
where $m\coloneqq \max(2-\beta,2\beta)$.
We are now lead to study the eigenvalue counting function of $T^{f_\beta(\gamma)}_{v,2\gamma^{-\beta}}$.

We introduce $U_v^+\coloneqq U_v\cap\left(\mathbb R_+\times\mathbb R_+\right)$ and
 $\Sigma^+_{v,2\gamma^{-\beta}}\coloneqq \Sigma_{v,2\gamma^{-\beta}} \cap \partial U_v^+$.
Due to the symmetry of the domain $(U_v)_{0,2\gamma^{-\beta}}$ 
with respect to the $x_1-$axis, it is easy to see that
\begin{align}
\label{TS7}
\mathcal N(T^{f_\beta( \gamma)}_{v,2\gamma^{-\beta}}, E\gamma^2+C\gamma^m) \leq 2 \mathcal N(T^{f_\beta( \gamma),+}_{v,2\gamma^{-\beta}}, E\gamma^2+C\gamma^m),
\end{align}
where $T^{f_\beta(\gamma),+}_{v,2\gamma^{-\beta}}$ is the unique self-adjoint operator associated with
\[
t^{f_\beta(\gamma),+}_{v,2\gamma^{-\beta}}(\psi,\psi)=\int_{(U_v^+)_{0,2\gamma^{-\beta}}} \lvert \nabla \psi \rvert ^2 du -f_\beta(\gamma) \int_{\Sigma^+_{v,2\gamma^{-\beta}}} \lvert \psi \rvert^2 ds,
\]
with $D(t^{f_\beta(\gamma),+}_{v,2\gamma^{-\beta}})\coloneqq \left\{ \psi \in H^1\left((U_v^+)_{0,2\gamma^{-\beta}} \right), \psi (u)= 0 \text{ for } u\in \partial(U_v^+)_{0,2\gamma^{-\beta}}\cap \partial B(0,2\gamma^{-\beta})\right \}$. 
We now introduce a partition of $(U_v^+)_{0,2\gamma^{-\beta}}$.
Let $A_\gamma\coloneqq(\gamma^{-1},0)$, 
$H_{A_\gamma}$ be the orthogal projection of $A_\gamma$ on $\Sigma^+_{v,2\gamma^{-\beta}}$ and 
$V$ be the infinite sector obtained by translation of vector $(\gamma^{-\beta}, 0)$ of $U_v$. We denote $V^+\coloneqq V\cap \left ( \mathbb R_+ \times \mathbb R_+\right)$. Notice that $H_{A_\gamma}$ is well defined as  $\mathcal V$ is the set of \emph{convex} vertices of $\Omega$ : then $\alpha_v \in (0,\pi/2)$.
We introduce the two new domains: 
\begin{itemize}
\item $D_\gamma^1$ the triangle defined by its vertices $(0,0)$, $A_\gamma$ and $H_{A_\gamma}$;
\item $D^2 \coloneqq (U_v^+)_{0,2\gamma^{-\beta}} \backslash \left (\overline{D^1_\gamma \cup V^+} \right )$.
\end{itemize}

 \begin{figure}
 	\centering
 		\includegraphics[width=0.70\textwidth]{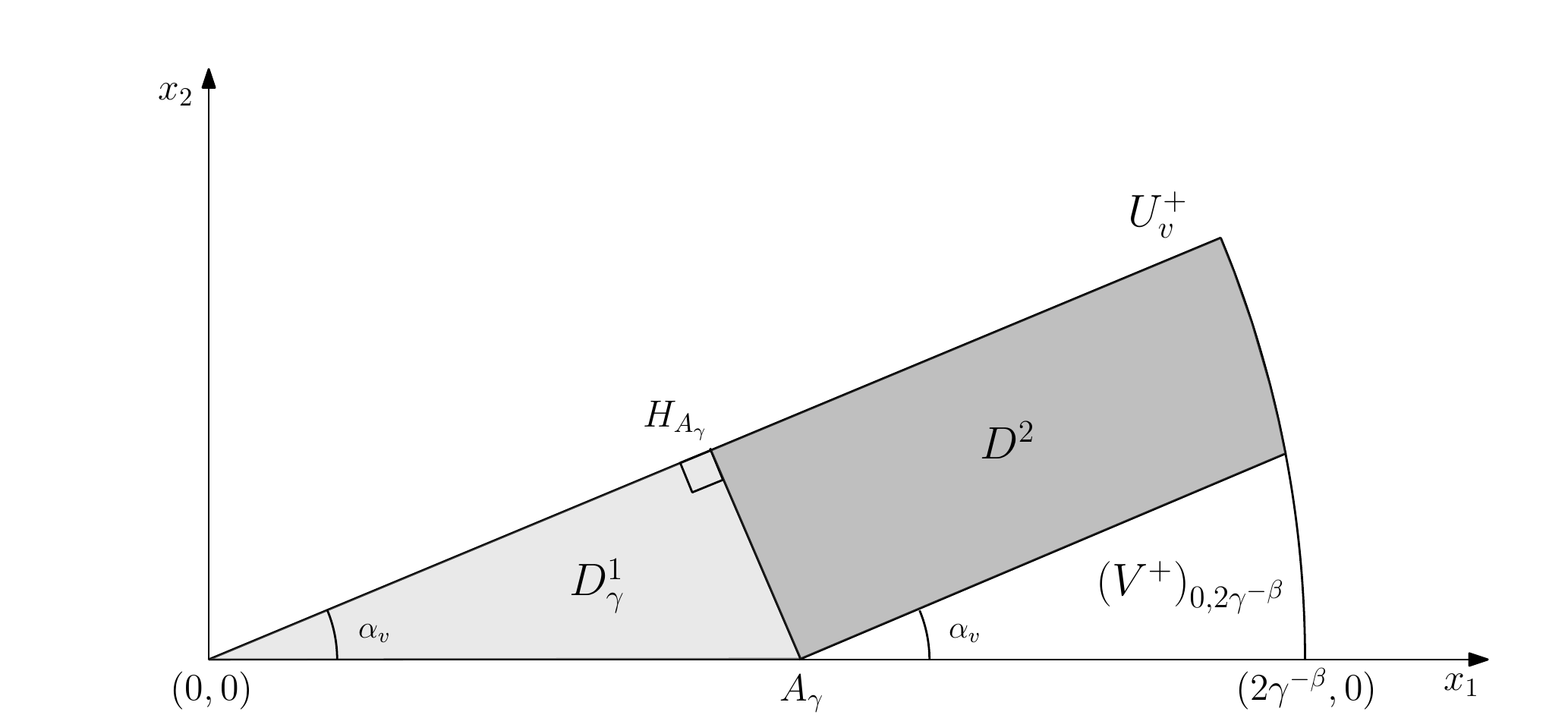}
 	\caption{\label{fig:TS_pol} Partition of $U_v^+$ by the domains $D^1_\gamma$, $D^2$ and $V_{0,2\gamma^{-\beta}}$.}
 \end{figure}

Hence we have, see Figure~\ref{fig:TS_pol},
\[
\overline{(U_v^+)_{0,2\gamma^{-\beta}}} = \overline{ D_\gamma^1 \cup D^2 \cup (V^+)_{0,2\gamma^{-\beta}}},
\]
where $(V^+)_{0,2\gamma^{-\beta}}\coloneqq V^+ \cap B(0, 2\gamma^{-\beta})$.
We consider three new sesquilinear forms associated with this covering, 
\[
h(\psi,\psi)= \int_{(V^+)_{0,2\gamma^{-\beta}}} \lvert \nabla \psi \rvert^2 du, 
\]
with $D(h)\coloneqq \left\{ \psi \in H^1\left((V^+)_{0,2\gamma^{-\beta}}\right), \psi(u) =0\text{ for } u\in \partial (V^+)_{0,2\gamma^{-\beta}}\cap \partial B(0,2\gamma^{-\beta}) \right \}$.
For a domain $D\subset \mathbb R^2$ and $\mu>0$ satisfying $f_\beta(\gamma)\leq \mu \gamma$ for all $\gamma>0$ we define
\[
r^{\mu\gamma}_{D}(\psi,\psi)=\int_{D} \lvert \nabla \psi \rvert^2 du -\mu\gamma \int_{\partial D \cap \partial U_v} \lvert \psi \rvert^2 ds,\quad \psi \in D(r^{\mu\gamma}_{D})\coloneqq H^1(D),
\]
and
\[
p^{\mu\gamma}(\psi,\psi)=\int_{D^2} \lvert \nabla \psi \rvert^2 du -\mu\gamma \int_{\partial D^2\cap \partial U_v^+} \lvert \psi \rvert^2 ds,
\]
with $D(p^{\mu\gamma})\coloneqq \left \{ \psi \in H^1(D^2) : \psi(u) =0 \text{ for } u\in \partial D^2\cap \partial B(0,2\gamma^{-\beta}) \right \}$. 
It is easy to see that $T^{f_\beta(\gamma),+}_{v,2\gamma^{-\beta}}\geq R^{\mu\gamma}_{D^1_\gamma}\oplus H \oplus P^{\mu \gamma}$. 
Moreover,
as the operator $H$ is positive we can write $\mathcal N(H, E\gamma^2+C\gamma^m)=0$ for $\gamma$ large enough.
Thus we obtain
\begin{align}
\label{TS3}
\mathcal N (T^{f_\beta(\gamma),+}_{v,2\gamma^{-\beta}}, E \gamma^2+C\gamma^m) \leq \mathcal N(R^{\mu \gamma}_{D^1_\gamma},  E\gamma^2+C\gamma^m)+ \mathcal N( P^{\mu\gamma}, E \gamma^2+C\gamma^m).
\end{align}
 We perform the change of variables $u=\gamma^{-1}v$ in the sesquilinear form $r^{\mu\gamma}_{D^1_\gamma}$. Then, for all $n \in\mathbb N$ we have 
$
E_n(R^{\mu\gamma}_{D^1_{\gamma}})= \gamma^2 E_n(R^{\mu}_{D^1}),
$ 
where $D^1$ is defined by its vertices $(0,0)$, $A\coloneqq(0,1)$ and $H_A$ being the orthogonal projection of $A$ on $\Sigma^+_{v,2\gamma^{-\beta}}$. 
In particular, 
\[
\mathcal N(R^{\mu\gamma}_{D^1_{\gamma}}, E\gamma^2+C\gamma^m) = \mathcal N(R^{\mu}_{D^1},  E+ C\gamma^{m-2})\leq C,
\]
as $m-2 <0$.
Let us now focus on the operator $P^{\mu\gamma}$. Notice that $D^2$ is included in a rectangle of length $2\gamma^{-\beta}-\gamma^{-1}\cos(\alpha)$ and width $\gamma^{-1} \sin(\alpha)$. Extending $\phi \in D(p^{\mu\gamma})$ by $0$ and using the min-max principle, we obtain
\begin{align*}
\mathcal N(P^{\mu\gamma},  E\gamma^2+ C\gamma^m) \leq N(\mathcal L^{ND}\otimes 1 + 1\otimes \mathscr N^{\mu\gamma}_{\gamma^{-1}\sin \alpha_v}, E\gamma^2 + C\gamma^m),
\end{align*}
where $\mathcal L^{ND}$ is the operator acting on 
$L^2(0, 2\gamma^{-\beta}-\gamma^{-1}\cos(\alpha_v))$ as $f\mapsto -f''$ on
\[
D(\mathcal L^{ND})\coloneqq\{ f\in H^2(0,2\gamma^{-\beta}-\gamma^{-1}\cos \alpha_v), -f'(0)=f(2\gamma^{-\beta}-\gamma^{-1}\cos(\alpha_v))=0\}.
\] 

The operator $  \mathscr N^{\mu\gamma}_{\gamma^{-1}\sin \alpha_v}$  acts on $L^2(0, \gamma^{-1}\sin(\alpha_v))$ as $f\mapsto -f''$ on
\[
D(\mathscr N^{\mu\gamma}_{ \gamma^{-1}\sin \alpha_v})\coloneqq \{H^2(0,\gamma^{-1}\sin\alpha_v), -f'(0)-\mu\gamma f(0)=f'(\gamma^{-1}\sin\alpha_v)=0\}.
\]
We know by \cite[Lemma A.1]{HP} that $E_1(\mathscr N^{\mu\gamma}_{\gamma^{-1}\sin \alpha_v})$ is the unique strictly negative eigenvalue of $\mathscr N^{\mu\gamma}_{ \gamma^{-1}\sin \alpha_v}$ and $E_1(\mathscr N^{\mu\gamma}_{ \gamma^{-1}\sin \alpha_v})=\gamma^2 E_1(\mathscr N^\mu_ {\sin \alpha_v})$.
 Thus, we have
\[
\mathcal N(P^{\mu\gamma}, E\gamma^2 +C\gamma^m) \leq\# \left\{ n\in\mathbb N, E_n(\mathcal L^{ND}) +\gamma^2 E_1(\mathscr N^\mu_{\sin \alpha_v} )<  E\gamma^2 + C\gamma^m\right\},
\]
which implies
\[
\mathcal N(P^{\mu\gamma}, E\gamma^2 +C\gamma^m) \leq \gamma ^{1-\beta}
\frac{\sqrt{\left\lvert  E + E_1(\mathscr N^\mu_{\sin \alpha_v})\right \rvert}}{\pi}+C.
\]
Combining it with \eqref{TSprop1}, \eqref{TS7} and \eqref{TS3} finishes the proof.
\end{proof}


\section{Concluding remarks}\label{prospects}

In Theorem~\ref{asexpcurv}, we proved that the asymptotics of the $\mathcal N^\oplus$ first eigenvalues of the operator $Q^\gamma$ is determined by Robin Laplacians acting on the tangent sectors. The next natural step would be to understand what happens for the next eigenvalues. More precisely, we would like to obtain an asymptotics for $E_{\mathcal N^\oplus+j}(Q^\gamma)$ as $\gamma$ becomes large. 
For now, we can give a first answer stating that the corners do not contribute at the leading order to the the asymptotics.

\begin{proposition}
\label{nexteigen}
For each $j\geq 1$, theres exists $C>0$ such that, for $\gamma$ large enough,
\[
-\gamma^2-C\gamma^{4/3}\leq E_{\mathcal N^\oplus+j}(Q^\gamma) \leq -\gamma^2-\kappa_{\min}\gamma +C,
\]
where $\kappa_{\min}\coloneqq \min_{k=1,...,M}\left(\min_{s\in[0,l_k]}\kappa_k(s)\right)$. Consequently,
\[
E_{\mathcal N^\oplus+j}(Q^\gamma)=-\gamma^2+o(\gamma^2),\quad \text{as}\quad \gamma \to +\infty.
\]
\end{proposition}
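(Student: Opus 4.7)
The proof naturally decomposes into a lower bound and an upper bound for $E_{\mathcal N^\oplus+j}(Q^\gamma)$, each obtained by recycling machinery that has already been built.

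For the lower bound, the plan is to reuse the bracketing from Section~\ref{cutvertices} and push the counting past the discrete spectrum of the model operator. By Lemma~\ref{mincurvpol},
\[
E_{\mathcal N^\oplus+j}(Q^\gamma) \geq E_{\mathcal N^\oplus+j}\Bigl(\bigoplus_{v\in\mathcal V} Q^{\gamma,V}_{v,2\gamma^{-\beta}}\oplus Q^{\gamma,V}_0\Bigr),
\]
and by \eqref{lbvpregpart} every eigenvalue of $Q^{\gamma,V}_0$ is at least $-\gamma^2-C\gamma^{2\beta}$. For the corners, I would combine Lemma~\ref{min-maxTS} with the extension-by-zero comparison $E_n(T^{f_\beta(\gamma)}_{v,2\gamma^{-\beta}}) \geq E_n(T^{f_\beta(\gamma)}_v)$. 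Since $T^{f_\beta(\gamma)}_v$ has exactly $\mathcal N_v$ discrete eigenvalues and $\Specess(T^{f_\beta(\gamma)}_v) = [-f_\beta(\gamma)^2,+\infty)$, the min-max principle gives $E_n(T^{f_\beta(\gamma)}_v) \geq -f_\beta(\gamma)^2$ for $n > \mathcal N_v$, and absorbing the potential $V$ as in \eqref{lbvpcurv} yields
\[
E_n(Q^{\gamma,V}_{v,2\gamma^{-\beta}}) \geq -\gamma^2 - C\gamma^{2-\beta} - c\gamma^{2\beta}, \qquad n > \mathcal N_v.
\]
Therefore the direct sum carries at most $\sum_{v\in\mathcal V}\mathcal N_v = \mathcal N^\oplus$ eigenvalues strictly below this threshold, so its $(\mathcal N^\oplus+j)$-th eigenvalue is bounded below by $-\gamma^2 - C(\gamma^{2-\beta}+\gamma^{2\beta})$; balancing with $\beta=2/3$ produces the claimed $-\gamma^2 - C\gamma^{4/3}$.

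For the upper bound, the plan is to exhibit $\mathcal N^\oplus+j$ linearly independent trial functions whose Rayleigh quotients are all $\leq -\gamma^2 - \kappa_{\min}\gamma + C$. The first $\mathcal N^\oplus$ are the weak quasi-modes $\tilde\phi^{\gamma,v}_n$ of Section~\ref{descriptqmcurv}: by Proposition~\ref{proppropqm} their Rayleigh quotients equal $\gamma^2 E_n(T_v) + O(\gamma^{2-\beta})$, which, since $E_n(T_v) < -1$, is eventually much smaller than $-\gamma^2-\kappa_{\min}\gamma+C$. The remaining $j$ trial functions I would take of the form $\phi_i(s,t) = \eta_i(s)\,e^{-\gamma t}\chi(t)$ in tubular coordinates $(s,t)$ around $j$ distinct points $s_1,\dots,s_j$ in the interior of some fixed smooth arc $\Gamma_k$; here $\eta_i$ are smooth cutoffs with pairwise disjoint supports, also disjoint from all $\supp \chi^\gamma_v$, and $\chi$ truncates $t$ at a small fixed scale. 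The same unitary transform and local expansion used inside the proof of Proposition~\ref{interpb} give
\[
\frac{q^\gamma(\phi_i,\phi_i)}{\lVert\phi_i\rVert^2} = -\gamma^2 - \kappa_k(s_i)\gamma + O(1) \leq -\gamma^2 - \kappa_{\min}\gamma + C,
\]
where the last inequality uses $\kappa_k(s_i) \geq \kappa_{\min}$. Disjointness of supports ensures linear independence, and the min-max principle concludes the bound.

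Combined, the two estimates yield $E_{\mathcal N^\oplus+j}(Q^\gamma) = -\gamma^2 + o(\gamma^2)$ as $\gamma\to+\infty$. The main technical point is the counting step in the lower bound: one has to argue carefully that the Dirichlet condition artificially imposed at $\partial B(v,2\gamma^{-\beta})$ in the definition of $T^{f_\beta(\gamma)}_{v,2\gamma^{-\beta}}$ cannot produce more than $\mathcal N_v$ eigenvalues strictly below $-f_\beta(\gamma)^2$, which follows from the comparison built into the min-max principle together with the known structure of $\Spec(T^{f_\beta(\gamma)}_v)$. The upper bound is then essentially routine thanks to the tubular-coordinate apparatus already developed in Section~\ref{Neumann}.
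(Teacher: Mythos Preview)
Your lower bound is exactly the argument behind \eqref{asvpcurvdeux} with $l=K^\oplus$, which the paper simply cites; you have correctly unpacked it, including the key observation that $\Lambda_{\mathcal N_v+1}(T^{f_\beta(\gamma)}_v)=-f_\beta(\gamma)^2$ because the sector operator has only $\mathcal N_v$ discrete eigenvalues.

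Your upper bound is correct but takes a different route from the paper. You build an $(\mathcal N^\oplus+j)$-dimensional test space by adjoining $j$ tube-supported functions to the weak quasi-modes, then bound the sup of the Rayleigh quotient over the span using near-orthogonality and disjointness of supports. The paper instead never uses the corner quasi-modes for the upper bound: it observes that the form domain of $Q^{\gamma,D}_k$ (the Robin--Dirichlet operator on a single tubular neighborhood $\tilde\Omega^k_{a_\gamma}$) embeds into $H^1(\Omega)$, so $E_n(Q^\gamma)\leq E_n(Q^{\gamma,D}_k)$ for \emph{every} $n$, and then separates variables in the rectangle $(0,L_\gamma)\times(0,a_\gamma)$ to get $E_n(Q^{\gamma,D}_k)\leq E_n(\mathcal L^D_{L_\gamma})+E_1(\mathscr D^-)\leq -\gamma^2-\kappa_{k,\min}\gamma+C$ directly. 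This is cleaner: it avoids the Gram-matrix bookkeeping for the mixed family, and it yields the bound uniformly in $n$ (for each fixed $n$) without having to manufacture exactly $\mathcal N^\oplus+j$ trial functions. Your approach, on the other hand, is more hands-on and would generalize more readily if one wanted to localize the trial functions near a specific boundary point rather than an entire arc.
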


\begin{proof}
We obtain the lower bound by Proposition~\ref{asvpcurv}: using \eqref{asvpcurvdeux} with $l=K^\oplus$ we immediately have, for $\gamma$ large enough,
\[
E_{\mathcal N^\oplus +1}(Q^\gamma)\geq -\gamma ^2 -C \gamma^{4/3}.
\]
Let us now focus on the upper bound. We use the notations of Section~\ref{Dirichlet}. Let $k\in\{1,...,M\}$. Recall that $D(q^{\gamma,D}_k)\coloneqq \{\phi\in H^1(\tilde \Omega^k_{a_\gamma}), \phi=0 \text{ on } \partial \tilde \Omega^k_{a_\gamma}\backslash \Gamma_k\}$. As $D(q^{\gamma,D}_k)\subset H^1(\Omega)$ we get by the min-max principle and for all $n\in\mathbb N$,
\[
E_n(Q^\gamma)\leq E_n(Q^{\gamma,D}_k).
\]
Following the steps of the proof of Proposition~\ref{minND}, we know that $E_n(Q^{\gamma,D}_k)\leq E_n(\tilde H^{\gamma,D}_k)$. We now introduce
\begin{align*}
\tilde h^{\gamma,D,-}_k(\phi,\phi)=&\int_{0}^{L_\gamma}\int_{0}^{a_\gamma} \left( (1+a_\gamma C) \lvert \partial_s \phi\rvert ^2 +\lvert \partial_t\phi\rvert^2+C\lvert \phi \rvert^2\right) dsdt\\
&-(\frac{ \kappa_{k,\min}}{2}+\gamma) \int_0^{L_\gamma} \lvert \phi(s,0)\rvert^2 ds ,\quad \phi \in D(\tilde h^{\gamma,D}_k),
\end{align*}
where $\kappa_{k,\min}\coloneqq \min_{s\in[0,l_k]} \kappa_k(s)$.
Then, by the min-max principle,
\[
E_n(Q^{\gamma})\leq E_n(\tilde H^{\gamma,D,-}_k).
\]
By separation of variables it is easy to see that $\tilde H^{\gamma,D,-}_k$ is unitarily equivalent to $\mathcal L^D_{L_\gamma}\otimes 1+ 1\otimes \mathscr D^-$ where the operator $\mathcal L^D_{L_\gamma}$ acts on $L^2(0,L_\gamma)$ as $f\mapsto -(1+a_\gamma C) f''+Cf$ with 
\[
D(\mathcal L ^D_{L_\gamma})\coloneqq H^2(0,L_\gamma)\cap H^1_0(0,L_\gamma).
\]
The operator $\mathscr D^-\coloneqq \mathscr D_{\frac{ \kappa_{k,\min}}{2}+\gamma,a_\gamma}$ defined in Section~\ref{auxop} acts on $L^2(0,a_\gamma)$ as $f\mapsto-f''$ with 
\[
D(\mathscr D^-)\coloneqq\{f\in H^2(0,a_\gamma), -f'(0)-(\frac{\kappa_{k,\min}}{2}+\gamma)=f(a_\gamma)=0\}.
\]
There exists $\gamma_0>0$ such that for all $\gamma>\gamma_0$ we have $(\displaystyle\frac{ \kappa_{k,\min}}{2}+\gamma)a_\gamma >1$.  Then for all $\gamma>\gamma_0$, we know by  Proposition~\ref{LemmaHP} that $ E_1(\mathscr D^-)$ is the unique negative eigenvalue of $\mathscr D^-$ and we have the following estimate
\[
E_1(\mathscr D^-)\leq -(\frac{ \kappa_{k,\min}}{2}+\gamma)^2 +C.
\]
As $\Spec(\mathcal L^D_{L_\gamma})\subset \mathbb R_+$, we then have for $\gamma$ large enough,
\[
E_n(Q^\gamma) \leq E_n(\mathcal L ^D_{L_\gamma})+E_1(\mathscr D^-).
\]
Using the previous estimate on $E_1(\mathscr D^-)$ we get
\[
E_n(Q^\gamma)\leq -\gamma^2- \kappa_{k,\min}\gamma +C.
\]
As it is true for all $k\in\{1,...,M\}$ we can take the minimum over $k$ and obtain the result.

\end{proof}
\begin{Remarques}
 In the present paper, we do not investigate the second term in the asymptotics of the further eigenvalues. However, in the simple case of a square one can see, using separation of variables, that the second term is a constant depending on the length. The general case of a curvilinear polygon need further considerations and this will be discussed elsewhere.
\end{Remarques}

\noindent \textbf{Acknowledgements} 
I would like to thank Konstantin Pankrashkin for his support, his remarks and the fruitful conversations during the achievement of this work.

\appendix
\section{Spectral approximation}\label{speccorol}
\begin{proposition}\label{prop}
Let $\mathcal H$ be a Hilbert space, $A$ a self-adjoint operator acting on $\mathcal H$ and $\lambda \in \mathbb R$. We suppose that there exists $\epsilon >0$ and an orthonormal family $\psi_1,...,\psi_n \in D(A)$ satisfying
\[
\lVert (A-\lambda) \psi_j\rVert< \epsilon, \quad j=1,...,n.
\]
Then,
\[
\dim \Ran P_A(\lambda-\sqrt{n}\epsilon,\lambda+\sqrt{n}\epsilon) \geq n,
\]
where $P_A(a,b)$ stands for the spectral projection of $A$ on the interval $(a,b)\subset \mathbb R$.
\end{proposition}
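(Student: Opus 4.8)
The plan is to argue by contradiction, combining an elementary upper bound coming from the hypothesis with the spectral-theorem lower bound valid on the complement of the interval. Set $I\coloneqq(\lambda-\sqrt n\,\epsilon,\lambda+\sqrt n\,\epsilon)$ and $P\coloneqq P_A(I)$, and assume that $\dim\Ran P\le n-1$. Since $\psi_1,\dots,\psi_n$ are orthonormal, hence linearly independent, the subspace $V\coloneqq\Span\{\psi_1,\dots,\psi_n\}$ is $n$-dimensional, so the restriction $P|_V\colon V\to\Ran P$ cannot be injective. Therefore there is a nonzero $\psi=\sum_{j=1}^n c_j\psi_j$ with $P\psi=0$, and $\psi\in D(A)$ as a finite linear combination of the $\psi_j\in D(A)$.

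The first step is the upper bound. Using the triangle inequality, the hypothesis $\|(A-\lambda)\psi_j\|<\epsilon$, the identity $\|\psi\|^2=\sum_j|c_j|^2$ (orthonormality), and the Cauchy--Schwarz inequality in the form $\sum_j|c_j|\le\sqrt n\,(\sum_j|c_j|^2)^{1/2}$, one obtains
\[
\|(A-\lambda)\psi\|\;\le\;\sum_{j=1}^n|c_j|\,\|(A-\lambda)\psi_j\|\;<\;\epsilon\sum_{j=1}^n|c_j|\;\le\;\sqrt n\,\epsilon\,\|\psi\|.
\]

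The second step is the matching lower bound. Writing $\{E_t\}$ for the spectral family of $A$ and $\mu_\psi\coloneqq\langle E_{(\cdot)}\psi,\psi\rangle$, the condition $P\psi=0$ forces $\mu_\psi(I)=0$, so $\mu_\psi$ is supported on $\mathbb R\setminus I$, where $|t-\lambda|\ge\sqrt n\,\epsilon$; the spectral theorem then gives
\[
\|(A-\lambda)\psi\|^2\;=\;\int_{\mathbb R\setminus I}(t-\lambda)^2\,d\mu_\psi(t)\;\ge\;n\epsilon^2\,\mu_\psi(\mathbb R)\;=\;n\epsilon^2\,\|\psi\|^2.
\]
Combining the two displays yields $\sqrt n\,\epsilon\,\|\psi\|\le\|(A-\lambda)\psi\|<\sqrt n\,\epsilon\,\|\psi\|$, which is impossible since $\psi\ne0$; hence $\dim\Ran P\ge n$, as claimed.

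There is no serious obstacle here: it is the standard quasi-mode counting argument via the spectral projection. The only points deserving a little care are that closing the contradiction uses the \emph{strict} inequality in the hypothesis (the spectral lower bound is only $\ge$), and that one should record $\psi\in D(A)$ before invoking $\|(A-\lambda)\psi\|^2=\int(t-\lambda)^2\,d\mu_\psi$; both are immediate once $\psi$ is taken to be a finite combination of the $\psi_j$. The non-orthonormal, non-strict version used as Proposition~\ref{nvp} then follows by the same scheme after normalising and keeping track of the smallest and largest eigenvalues of the Gramian matrix, which is where the factor $\sqrt{\beta_{\max}/\beta_{\min}}$ enters.
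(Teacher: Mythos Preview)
Your argument is correct and follows essentially the same route as the paper's proof: both argue by contradiction, find a nonzero $\psi\in\Span\{\psi_1,\dots,\psi_n\}$ annihilated by the spectral projection, bound $\|(A-\lambda)\psi\|$ from above via Cauchy--Schwarz and from below via the spectral theorem on the complement of the interval, and derive a contradiction. Your version is in fact slightly tidier in tracking the strict inequality needed to close the argument.
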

\begin{proof}
For simplicity we denote $P\coloneqq P_A(\lambda-\sqrt{n}\epsilon,\lambda+\sqrt{n}\epsilon)$.
Let us make a proof by contradiction and suppose that $\dim \Ran P \leq n-1$. Let $\tilde A\coloneqq A_{\lvert \Ran(1-P)}$. Then, $\Spec(\tilde A)\cap (\lambda-\sqrt{n}\epsilon,\lambda+\sqrt{n}\epsilon)=\emptyset$. Moreover, as we assumed $\dim\Ran P \leq n-1$, there exists $\psi \in \Span\{\psi_1,...,\psi_n\}\backslash \{0\}$ such that $\psi \in\Ran(1-P)$. Without loss of generality we assume that there exist $(\alpha_1,...,\alpha_n)\in \mathbb C^n$ such that 
\[
\psi=\sum_{i=1}^n \alpha_i \psi_i \text{ with } \sum_{i=1}^n \lvert \alpha_i\rvert^2 =1.
\]
Then,
\begin{align*}
\lVert (\tilde A-\lambda) \psi \rVert^2 & = \lVert  (1-P) (A-\lambda) \psi \rVert^2 \\
&\leq \lVert (A-\lambda ) \psi \rVert^2 \\
&\leq (\sum_{i=1}^n \lvert \alpha_i\rvert^2) (\sum_{i=1}^n\lVert (A-\lambda) \psi_i\rVert^2).
\end{align*}
By assumptions, we finally get  $\lVert (\tilde A-\lambda) \psi \rVert \leq \sqrt{n} \epsilon$. Then, by the spectral theorem we can conclude that $\tilde A$ admits some spectrum in $(\lambda-\sqrt{n}\epsilon,\lambda+\sqrt{n}\epsilon)$, which is a contradiction.
\end{proof}

\begin{Corollaires}\label{cor}
If there exist $\varphi_1,...,\varphi_n \in D(A)$ linearly independent and satisfying 
\[
\frac{\lVert (A-\lambda) \varphi_j\rVert}{\lVert \varphi_j\rVert} < \epsilon, \quad j=1,...,n,
\]
then 
\[
\dim \Ran P_A(\lambda-n^{\frac{3}{2}}\epsilon\sqrt{\frac{\lambda_{\max}}{\lambda_{\min}}},\lambda+n^{\frac{3}{2}}\epsilon\sqrt{\frac{\lambda_{\max}}{\lambda_{\min}}}) \geq n,
\]
where $\lambda_{\min}$ (resp. $\lambda_{\max}$)
 is the minimal (resp. maximal) eigenvalue of the Gramian matrix of the family $(\varphi_j)_{j=1}^n$. 
 In particular, if $\Specess(A)\cap (\lambda-n^{\frac{3}{2}}\epsilon\sqrt{\frac{\lambda_{\max}}{\lambda_{\min}}},\lambda+n^{\frac{3}{2}}\epsilon\sqrt{\frac{\lambda_{\max}}{\lambda_{\min}}})=\emptyset$ 
 there exist at least $n$ eigenvalues in $(\lambda-n^{\frac{3}{2}}\epsilon\sqrt{\frac{\lambda_{\max}}{\lambda_{\min}}},\lambda+n^{\frac{3}{2}}\epsilon\sqrt{\frac{\lambda_{\max}}{\lambda_{\min}}})$.

\end{Corollaires}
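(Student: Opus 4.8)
The plan is to reduce Corollary~\ref{cor} to Proposition~\ref{prop} by orthonormalising the family $(\varphi_j)_{j=1}^n$. Since $\varphi_1,\dots,\varphi_n$ are linearly independent, their Gramian matrix $G$, with entries $G_{ij}=\langle\varphi_i,\varphi_j\rangle$, is Hermitian and positive definite; let $\lambda_{\min}>0$ and $\lambda_{\max}$ be its smallest and largest eigenvalues. I would then set
\[
\psi_k\coloneqq\sum_{j=1}^n (G^{-1/2})_{jk}\,\varphi_j,\qquad k=1,\dots,n,
\]
which belong to $D(A)$ as finite linear combinations of the $\varphi_j$. A direct computation using the Hermitian symmetry of $G^{-1/2}$ gives $\langle\psi_k,\psi_l\rangle=(G^{-1/2}G\,G^{-1/2})_{kl}=\delta_{kl}$, so that $(\psi_k)_{k=1}^n$ is an orthonormal family.

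\textbf{Key estimate.} The heart of the matter is to control $\lVert (A-\lambda)\psi_k\rVert$ in terms of $\epsilon$, $\lambda_{\min}$ and $\lambda_{\max}$. By the triangle inequality and the hypothesis $\lVert(A-\lambda)\varphi_j\rVert<\epsilon\lVert\varphi_j\rVert$,
\[
\lVert (A-\lambda)\psi_k\rVert \le \sum_{j=1}^n \lvert(G^{-1/2})_{jk}\rvert\,\lVert(A-\lambda)\varphi_j\rVert < \epsilon\sum_{j=1}^n \lvert(G^{-1/2})_{jk}\rvert\,\lVert\varphi_j\rVert .
\]
Now $\lVert\varphi_j\rVert^2=G_{jj}\le\lambda_{\max}$ and $\lvert(G^{-1/2})_{jk}\rvert\le\lVert G^{-1/2}\rVert=\lambda_{\min}^{-1/2}$, so the right-hand side is bounded by $n\epsilon\sqrt{\lambda_{\max}/\lambda_{\min}}$. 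Applying Proposition~\ref{prop} to the orthonormal family $(\psi_k)_{k=1}^n$ with $\epsilon$ replaced by $\epsilon'\coloneqq n\epsilon\sqrt{\lambda_{\max}/\lambda_{\min}}$ yields
\[
\dim\Ran P_A\bigl(\lambda-\sqrt n\,\epsilon',\lambda+\sqrt n\,\epsilon'\bigr)\ge n,
\]
which is precisely the claimed bound, since $\sqrt n\,\epsilon'=n^{3/2}\epsilon\sqrt{\lambda_{\max}/\lambda_{\min}}$.

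\textbf{The ``in particular'' part.} If $\Specess(A)$ does not meet the open interval $I\coloneqq(\lambda-n^{3/2}\epsilon\sqrt{\lambda_{\max}/\lambda_{\min}},\,\lambda+n^{3/2}\epsilon\sqrt{\lambda_{\max}/\lambda_{\min}})$, then $\Ran P_A(I)$ is spanned by eigenvectors of $A$ with eigenvalues in $I$; since this range has dimension at least $n$, the operator $A$ has at least $n$ eigenvalues in $I$, counted with multiplicity.

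\textbf{Main obstacle.} The argument is essentially routine; the only point needing care is the orthonormalisation step, where the passage from $(\varphi_j)$ to $(\psi_k)$ loses a factor $n$ (hence the exponent $3/2$ instead of the $1/2$ of Proposition~\ref{prop}) because of the crude bounds $\lVert\varphi_j\rVert\le\sqrt{\lambda_{\max}}$ and $\lvert(G^{-1/2})_{jk}\rvert\le\lambda_{\min}^{-1/2}$ summed over $j$. One could instead apply Gram--Schmidt, but the symmetric square-root construction keeps the dependence of the constants on the Gramian's eigenvalues transparent, which is exactly the form used in Proposition~\ref{nvp}.
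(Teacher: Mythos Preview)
Your proof is correct and follows essentially the same route as the paper's: orthonormalise via the inverse square root of the Gramian, bound each entry of $G^{-1/2}$ by $\lambda_{\min}^{-1/2}$ and each $\lVert\varphi_j\rVert$ by $\sqrt{\lambda_{\max}}$ to obtain $\lVert(A-\lambda)\psi_k\rVert\le n\epsilon\sqrt{\lambda_{\max}/\lambda_{\min}}$, then invoke Proposition~\ref{prop}. The only cosmetic difference is that the paper squares first and applies Cauchy--Schwarz, whereas you use the triangle inequality directly; both lead to the same estimate.
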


\begin{proof}
The idea consists in using a specific orthonormalized family obtained from $(\varphi_j)_{j=1}^n$ and then use Proposition~\ref{prop}.
We denote by $G$ the Gramian matrix of $(\varphi_j)_{j=1}^n$. It is known that $G$ is a positive hermitian matrix and then there exists an invertible matrix $R$, hermitian and positive such that
\[
G=R^2.
\]
Let us define, for $j=1,...,n$, 
\[
\psi_j=\sum_{l=1}^n (R^{-1})_{jl} \varphi_l.
\]
Then $(\psi_j)_{j=1}^n$ is orthonormal :
\begin{align*}
\langle \psi_j, \psi_k\rangle &=\sum_{l=1}^n\sum_{m=1}^n (R^{-1})_{j,l} \overline{(R^{-1})_{k,m}} \langle \varphi_l,\varphi_m\rangle \\
&=\sum_{l=1}^n\sum_{m=1}^n (R^{-1})_{j,l} (R^{-1})_{m,k} G_{l,m} \\
&=\left ( R^{-1} G R^{-1}\right)_{j,k} \\
&=\left( I_n \right)_{j,k}.
\end{align*}
Moreover, for all $j=1,...,n$,
\begin{align*}
\lVert (A-\lambda) \psi_j\rVert^2  
\leq \epsilon ^2 \sum_{k=1}^n \lvert (R^{-1})_{j,k} \rvert^2 \lVert \varphi_k\rVert^2 + 2\Re \sum_{l<k} (R^{-1})_{j,l} \overline{(R^{-1})_{j,k}} \langle (A-\lambda)\varphi_l, (A-\lambda)\varphi_k\rangle.
\end{align*}
Applying Cauchy Schwartz we get 
\[
\left \lvert \sum_{l<k} (R^{-1})_{j,l} \overline{(R^{-1})_{j,k}} \langle (A-\lambda)\varphi_l, (A-\lambda)\varphi_k\rangle\right \rvert \leq \epsilon^2 \sum_{l<k}  \lvert (R^{-1})_{j,l} (R^{-1})_{j,k}\rvert \lVert \varphi_l\rVert \lVert \varphi_k\rVert,
\]
which implies 
\begin{align*}
\lVert (A-\lambda) \psi_j\rVert^2 \leq \epsilon^2 \left ( \sum_{k=1}^n \lvert (R^{-1})_{j,k}\rvert \lVert \varphi_k\rVert\right )^2 
\leq  \epsilon^2 \left(\sum_{k=1}^n \lvert (R^{-1})_{j,k}\rvert^2\right) \left (\sum_{k=1}^n  \lVert \varphi_k\rVert^2 \right ).
\end{align*}
By definition $\lVert \varphi_k\rVert^2=G_{k,k} \leq \lambda_{\max}$ and $\lvert  (R^{-1})_{j,k}\rvert^2=(R^{-1})_{j,k} (R^{-1})_{k,j} = G^{-1}_{j,j}$ where $G^{-1}$ is the inverse of $G$. Then,  $\lvert  (R^{-1})_{j,k}\rvert^2\leq \displaystyle\frac{1}{\lambda_{min}}$ and we get
\[
\lVert (A-\lambda) \psi_j\rVert \leq \epsilon n \sqrt{\frac{\lambda_{\max}}{\lambda_{\min}}},
\]
which allows us to conclude using Proposition~\ref{prop}.
\end{proof}

\end{document}